\newcommand{\N}{\ensuremath{\mathbb{N}}}
\newtheorem{defeng}{Definition}[section]
\newtheorem{theorem}[defeng]{Theorem}
\newtheorem{lemma}[defeng]{Lemma}
\newtheorem{conjecture}[defeng]{Conjecture}
\theoremstyle{break}\theorembodyfont{\rmfamily} }
\theoremstyle{break}\theorembodyfont{\rmfamily} }
\newcommand{\tp}{\!-\!}
\newcounter{claim}
\newenvironment{proof}[1][]%
 {\noindent {\setcounter{claim}{0}\sc proof ---
   }{#1}{}}{\hfill$\Box$\vspace{2ex}}
\def\claim{$$\vcenter\bgroup\advance\hsize by -8em\noindent
\refstepcounter{claim}\ignorespaces\it}	    
\def\endclaim{\rm\egroup\leqno(\theclaim)$$\global\@ignoretrue}
\newenvironment{proofclaim}[1][]%
	{\noindent {}{#1}{}}{ This proves claim~(\arabic{claim}).\vspace{1ex}}
\newcommand{\sm}{\setminus} 
\title{On graphs with no induced subdivision of $K_4$}
\author{ Benjamin L\'ev\^eque\thanks{CNRS, LIRMM, 161 rue Ada,
    34392~Montpellier~Cedex~05, France, benjamin.leveque@lirmm.fr}~,
Fr\'ed\'eric Maffray\thanks{CNRS, Laboratoire G-SCOP, 46 avenue
F\'{e}lix Viallet, 38031~Grenoble~Cedex, France,
frederic.maffray@g-scop.inpg.fr}~ and 
Nicolas Trotignon\thanks{CNRS, LIP, ENS Lyon, INRIA, Universit\'e de Lyon, France,
  nicolas.trotignon@ens-lyon.fr. Partially supported by \emph{Agence Nationale de la Recherche} under reference
    \textsc{anr 10 jcjc 0204 01} and by PHC Pavle Savi\'c grant,
    jointly awarded by EGIDE, an agency of the French Minist\`ere des
    Affaires \'etrang\`eres et europ\'eennes, and Serbian Ministry for
    Science and Technological Development.}}
\date{April 19, 2012}
\begin{document}

\maketitle

\section*{Abstract}
We prove a decomposition theorem for graphs that do not contain a
subdivision of $K_4$ as an induced subgraph where $K_4$ is the
complete graph on four vertices.  We obtain also a structure theorem
for the class $\cal C$ of graphs that contain neither a subdivision of
$K_4$ nor a wheel as an induced subgraph, where a wheel is a cycle on
at least four vertices together with a vertex that has at least three
neighbors on the cycle.  Our structure theorem is used to prove that
every graph in $\cal C$ is 3-colorable and entails a polynomial-time
recognition algorithm for membership in $\cal C$.  As an intermediate
result, we prove a structure theorem for the graphs whose cycles are
all chordless.

{\bf\noindent AMS Classification: } 05C75

\section{Introduction}

We use the standard notation from~\cite{bondy.murty:book}.  Unless
otherwise specified, we say that a graph $G$ \emph{contains} $H$ when
$H$ is isomorphic to an induced subgraph of $G$.  Denote by $K_4$ the
complete graph on four vertices.  A subdivision of a graph $G$ is
obtained by subdividing edges of $G$ into paths of arbitrary length
(at least one).  We say that $H$ is an \emph{ISK4 of a graph $G$} when
$H$ is an induced subgraph of $G$ and $H$ is a subdivision of $K_4$.
A graph that does not contain any subdivision of $K_4$ is said to be
\emph{ISK4-free}.  Our main result is Theorem~\ref{th:main}, saying
that every ISK4-free graph is either in some basic class or has some
special cutset.  In~\cite{leveque.lmt:detect}, it is mentioned that
deciding in polynomial time whether a given graph is ISK4-free is an
open question of interest.  This question was our initial motivation.
But our theorem does not lead to a polynomial-time recognition
algorithm so far.  The main reason is that at some step we use cutsets
(namely star cutsets and double star cutsets) that are difficult to
use in algorithms.  We leave as an open question the existence of a
more powerful decomposition theorem.

A consequence of our work is a complete structural description of the
class ${\cal C}$ of graphs that contain no ISK4 and no wheel.  Note
that this class is easily seen to be the class of graphs with no
$K_4$ and subdivision of a wheel as an induced subgraph.  We give
a recognition algorithm for this class, a coloring algorithm, and we
prove that every graph in this class is 3-colorable.

Before stating our main results more precisely, we introduce some
definitions and notation.

A \emph{hole} of a graph is an induced cycle on at least four
vertices.  A \emph{wheel} is a graph that consists of a hole $H$ plus
a vertex $x\notin H$, called the \emph{hub} of the wheel, that is
adjacent to at least three vertices of the hole.  An edge of the wheel
that is incident to $x$ is called a \emph{spoke}.  A vertex $v$ of a
graph is \emph{complete} to a set of vertices $S\subseteq
V(G)\setminus v$ if $v$ is adjacent to every vertex in $S$.  A vertex
$v$ is \emph{anticomplete} to a set of vertices $S$ if $v$ is adjacent
to no vertex in $S$.  Two disjoint sets $A,B$ are \emph{complete} to
each other if every vertex of $A$ is complete to $B$.  A graph is
called \emph{complete bipartite} (resp.~\emph{complete tripartite}) if
its vertex-set can be partitioned into two (resp.~three) non-empty
stable sets that are pairwise complete to each other.  If these two
(resp.~three) sets have size $p,q$ (resp.~$p, q, r$) then the graph is
denoted by $K_{p, q}$ (resp.~$K_{p,q,r}$).

Given a graph $H$, the \emph{line graph} of $H$ is the graph $L(H)$
with vertex-set $E(G)$ and edge-set $\{ef\, : \, e\cap f\neq
\emptyset\}$.  The graph $H$ is called a \emph{root} of $L(H)$.

We denote the path on vertices $x_1, \ldots, x_n$ with edges $x_1x_2$,
$\ldots$, $x_{n-1}x_n$ by $x_1 \tp \cdots \tp x_n$.  We also say that
$P$ is a $(x_1,x_n)$-path.  We denote by $x_i-P-x_j$ the subpath of
$P$ with extremities $x_i, x_j$.  A path or a cycle is
\emph{chordless} if it is an induced subgraph of the graph that we are
working on.

Given two graphs $G, G'$, we denote by $G\cup G'$ the graph whose
vertex set is $V(G)\cup V(G')$ and whose edge set is $E(G) \cup
E(G')$.

For any integer $k\ge 0$, a \emph{$k$-cutset} in a graph is a subset
$S\subset V(G)$ of size $k$ such that $G\sm S$ is disconnected.  A
\emph{proper $2$-cutset} of a graph $G$ is a $2$-cutset $\{a, b\}$
such that $ab\notin E(G)$, $V(G)\sm \{a,b \}$ can be partitioned into
two non-empty sets $X$ and $Y$ so that there is no edge between $X$
and $Y$ and each of $G[X \cup \{ a,b \}]$ and $G[Y \cup \{ a,b \}]$ is
not an $(a,b)$-path.

A \emph{star-cutset} of a graph is a set $S$ of vertices such that
$G\sm S$ is disconnected and $S$ contains a vertex adjacent to every
other vertex of $S$.

A \emph{double star cutset} of a graph is a set $S$ of vertices such
that $G\sm S$ is disconnected and $S$ contains two adjacent vertices
$u, v$ such that every vertex of $S$ is adjacent at least one of $u,
v$.  Note that a star-cutset is either a double star cutset or
consists of one vertex.

A multigraph is called \emph{series-parallel} if it arises from a
forest by applying the following operations repeatedly: adding a
parallel edge to an existing edge; subdividing an edge.  A
\emph{series-parallel graph} is a series-parallel multigraph with no
parallel edges.

Our main result is the following, which is proved in
Section~\ref{sec:th:main}.

\begin{theorem}
  \label{th:main}
  Let $G$ be an ISK4-free graph. Then either:
  \begin{itemize}
  \item  
    $G$ is series-parallel; 
  \item  
    $G$ is the line graph of a graph with maximum degree at most
    three;
  \item  
    $G$ has clique-cutset, a proper $2$-cutset, a star-cutset or a double
    star cutset.
  \end{itemize}
\end{theorem}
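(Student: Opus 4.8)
The plan is to prove the contrapositive form: assume that $G$ has no clique-cutset, no proper $2$-cutset, no star-cutset and no double star cutset, and show that $G$ is series-parallel or the line graph of a graph with maximum degree at most three. Since $G$ has no clique-cutset it has no cutvertex, so $G$ is $2$-connected, and the absence of proper $2$-cutsets and of (double) star-cutsets will be used repeatedly to forbid small separations. By a classical theorem, a graph is series-parallel precisely when it contains no subdivision of $K_4$ as a subgraph (equivalently, no $K_4$-minor). Hence I may assume that $G$ contains some subgraph that is a subdivision of $K_4$; since $G$ is ISK4-free this subgraph has a chord, and the goal becomes to show that $G$ is the line graph of a graph of maximum degree at most three.

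The first main step is an extraction lemma: a graph $G$ as above (ISK4-free, none of the four cutsets, containing a subdivision of $K_4$ as a subgraph) contains an induced \emph{prism} or an induced \emph{wheel}. To prove it, I would take a subdivision $F$ of $K_4$ in $G$ using as few edges as possible, with branch vertices $a,b,c,d$ and six branch-paths. Minimality forces every chord of $F$ to join an internal vertex of one branch-path to a vertex of a \emph{distinct} branch-path (a chord with both ends on one path, or a chord joining two branch vertices, can be used to shortcut a branch-path and yield a subdivision of $K_4$ with fewer edges). A case analysis on the two branch-paths containing the ends of such a chord — according to whether they share a branch vertex of $K_4$ or correspond to opposite edges — then exhibits, inside $F$ together with one or two of its chords, an induced prism or an induced wheel; the few sporadic configurations that could arise instead (such as an induced $K_{3,3}$) are eliminated using the absence of a star-cutset, or handled directly. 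I expect this lemma to be technical but essentially self-contained.

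The second, and in my view the hardest, step is the prism case. Here $G$ contains an induced prism, and since a prism is the line graph of $K_{2,3}$, the family of induced subgraphs of $G$ that are line graphs of graphs with maximum degree at most three is nonempty; let $H=L(R)$ be a maximal such induced subgraph, with $\Delta(R)\le 3$. If $H=G$ we are done, so suppose some $v\in V(G)\setminus V(H)$ has a neighbour in $H$. The key is to pin down $N(v)\cap V(H)$: maximality of $H$ means that $G[V(H)\cup\{v\}]$ is not the line graph of a subcubic graph, while the ISK4-freeness of $G$ forbids $v$ from attaching to $H$ along long stretches of the paths of $R$; combining these, one shows that $N(v)\cap V(H)$, or the union of such neighbourhoods over a bounded set of ``bad'' vertices, is a cutset of $G$ of one of the four permitted types. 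The delicate points are controlling attachments that run along a single branch-path of $R$ and dealing with a handful of small exceptional roots $R$; it is precisely these attachments that force star-cutsets and double star cutsets into the statement.

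The last step is the case where $G$ contains an induced wheel but no induced prism. Since $G$ is ISK4-free, a wheel whose hub has exactly three neighbours on its hole is itself a subdivision of $K_4$, so every induced wheel of $G$ has a hub with at least four neighbours on its hole. Choosing an induced wheel $(H,x)$ with $H$ shortest and analysing how the hole $H$ can be rerouted through $N(x)$ without creating an induced subdivision of $K_4$ or an induced prism, one obtains that $N(x)$, together possibly with one further neighbour of $x$, separates $G$, giving a star-cutset or a double star cutset (or $G$ is small enough to be checked by hand). Putting the three cases together — series-parallel; line graph of a graph with maximum degree at most three; a cutset in each of the prism and wheel cases — completes the proof.
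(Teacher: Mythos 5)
Your skeleton is in fact the paper's: reduce via Duffin--Dirac to the case where $G$ contains a (possibly non-induced) subdivision of $K_4$, use a minimal such subgraph to extract an induced prism, wheel or $K_{3,3}$, handle the prism case by a maximal induced line graph and an attachment analysis, and handle the wheel case by a minimal wheel and a star or double star cutset. But as written, two steps would fail, and they are exactly the points where the paper spends most of its machinery. In the prism case you maximize over induced line graphs of arbitrary subcubic graphs and claim that the attachment of an outside vertex (or of a bounded set of ``bad'' vertices) is one of the four permitted cutsets. Maximality over that class does kill ``augmenting'' attachments (adding the corresponding path to the root keeps maximum degree three), but it does not control ``square-type'' attachments: a path whose ends attach to the two opposite edges of an even square of $L(R)$ extends $L(R)$ to a \emph{rich square}, and a rich square is in general not a line graph (a square with three pairwise nonadjacent links already contains a claw), so there is no contradiction with your maximality; moreover the local attachment $N(v)\cap V(H)$ of such a vertex is not a cutset of any of the four kinds (in a rich square the cutset one eventually finds is the whole central square, a double star, found by a global argument). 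This is why the paper restricts the maximization to line graphs of \emph{substantial} roots -- cyclically $3$-connected and not square -- so that the augmenting extension stays in the class (Lemma~\ref{l:addedge}) and the attachment lemmas apply, and why it introduces rich squares as a separate structure with their own decomposition lemma (Lemma~\ref{l:richsquaredecomp}).

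The second failure is in your wheel case, which assumes only prism-freeness. The minimal-wheel argument also needs \emph{octahedron}-freeness: the octahedron $K_{2,2,2}$ is ISK4-free, prism-free and full of wheels, and the paper's Lemma~\ref{thm:hu4} obtains its contradiction precisely by exhibiting an octahedron, so graphs containing an octahedron cannot be processed by the rerouting argument and must instead be routed through the rich-square case (an octahedron is a rich square); such graphs need not be ``small enough to check by hand.'' Your treatment of the $K_{3,3}$ outcome is fine in spirit (the maximal complete bipartite analysis yields a clique cutset or a complete multipartite graph, and thick complete multipartite graphs do have star cutsets), but it too requires the attachment lemmas for a maximal $K_{p,q}$ rather than a one-line dismissal. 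So the architecture is right, but the proposal is missing the two ideas -- substantial roots and rich squares/octahedra as an intermediate basic class -- without which the prism and wheel steps do not go through.
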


The proof of the theorem above follows a classical idea.  We consider
a basic graph $H$ and prove that if a graph in our class contains $H$,
then either the whole graph is basic, or some part of the graph
attaches to $H$ in a way that entails a decomposition.  Then, for the
rest of the proof, the graphs under consideration can be considered
$H$-free.  We consider another basic graph $H'$, and so on.  The basic
graphs that we consider are $K_{3, 3}$, then some substantial
line graph, then prisms, and finally the octahedron and wheels.  The
idea of considering a maximal line graph in such a context was first
used in~\cite{conforti.c:wp}.  The same idea is essential in proof of
the Strong Perfect Graph Conjecture \cite{chudnovsky.r.s.t:spgt}.

Given a graph $G$, an induced subgraph $K$ of $G$, and a set $C$ of
vertices of $G\setminus K$, the \emph{attachment} of $C$ over $K$ is
$N(C) \cap V(K)$, which we also denote by $N_K(C)$.  When a set $S$ =
$\{u_1, u_2, u_3, u_4\}$ induces a square in a graph $G$ with $u_1,
u_2, u_3, u_4$ in this order along the square, a \emph{link} of $S$ is
an induced path $P$ of $G$ with ends $p, p'$ such that either $p=p'$
and $N_S(p) = S$, or $N_S(p) = \{u_1, u_2\}$ and $N_S(p') = \{u_3, u_4
\}$, or $N_S(p) = \{u_1, u_4\}$ and $N_S(p') = \{u_2, u_3\}$, and no
interior vertex of $P$ has a neighbor in $S$.  A link with ends $p,
p'$ is said to be \emph{short} if $p=p'$, and \emph{long} if $p\neq
p'$.  A \emph{rich square} (resp.\ \emph{long rich square}) is a graph
$K$ that contains a square $S$ as an induced subgraph such that
$K\setminus S$ has at least two components and every component of
$K\setminus S$ is a link (resp.\ a long link) of~$S$.  Then $S$ is
called a central square of $K$.  A rich square may have several
central squares; for example $K_{2,2,2}$ is a rich square with three
central squares.

In the particular case of wheel-free graph we have the following
structure theorem.  Note that a rich square is wheel-free if and only
if it is long.  A graph is \emph{chordless} if all its cycles are
chordless.  It is easy to check that a line graph $G=L(R)$ is
wheel-free if and only if $R$ is chordless.

\begin{theorem}
  \label{th:nowheel}
  Let $G$ be an \{ISK4, wheel\}-free graph. Then either:
  \begin{itemize}
  \item $G$ is series-parallel;
  \item $G$ is the line graph of a chordless graph with maximum degree
    at most three;
  \item $G$ is a complete bipartite graph;
  \item $G$ is a long rich square;
  \item $G$ has clique-cutset or a proper $2$-cutset.
  \end{itemize}
\end{theorem}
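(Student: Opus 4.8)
The plan is to deduce Theorem~\ref{th:nowheel} from Theorem~\ref{th:main}. Apply Theorem~\ref{th:main} to the \{ISK4, wheel\}-free graph $G$. If $G$ is series-parallel, or if $G$ has a clique-cutset or a proper $2$-cutset, then $G$ already satisfies one of the conclusions of Theorem~\ref{th:nowheel}. If $G$ is the line graph of a graph $R$ with $\Delta(R)\le 3$, then $R$ is chordless: indeed $G=L(R)$ is wheel-free, and a cycle of $R$ with a chord would yield in $L(R)$ a hole together with a vertex having at least three neighbours on it (the remark recalled before the statement); so again $G$ satisfies a conclusion. It therefore remains to prove the following: a \{ISK4, wheel\}-free graph $G$ that has no clique-cutset and no proper $2$-cutset but has a star-cutset or a double star cutset is a complete bipartite graph or a long rich square. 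This is where essentially all the work lies.

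To prove this I would treat the two kinds of cutset together. Let $S$ be the cutset; let $\{u,v\}$ be its central edge in the double-star case, and $u=v$ its centre in the star case, so that in both cases every vertex of $S$ lies in $\{u,v\}$ or has a neighbour in $\{u,v\}$. Let $C_1,\dots,C_k$ with $k\ge 2$ be the components of $G\setminus S$. For each $i$ the set $N(C_i)$ is contained in $S$ and separates $C_i$ from the other components, so it is not a clique (as $G$ has no clique-cutset) and hence contains two non-adjacent vertices $a_i,a_i'$. Since $C_i$ is connected and $a_i,a_i'$ both have a neighbour in $C_i$, there is a chordless path $Q_i$ with ends $a_i,a_i'$ and interior in $C_i$; moreover $a_i$ and $a_i'$ each have a neighbour in $\{u,v\}$. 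Using this, one extracts for each $i$ a hole $H_i$ meeting $C_i$ and passing through some vertex $w\in\{u,v\}$ and two non-adjacent neighbours of $w$; if $w$ has further neighbours on $Q_i$ one either finds such a hole already inside $Q_i$, between two neighbours of $w$ that are non-consecutive on $Q_i$, or else these neighbours form a subpath of $Q_i$, a case handled separately using ISK4-freeness.

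The heart of the proof is then to confront two holes $H_1,H_2$ coming from distinct components, together with $u,v$ and the attachments $N(C_1),N(C_2)$, and to show that, unless $G$ contains a wheel or an ISK4, these attachments are forced into one of two shapes. In the first, the attachment vertices of the components all see the same vertex (or pair) of $\{u,v\}$ and form a stable set; one then bootstraps to a partition of $V(G)$ into two stable sets, one made of $\{u,v\}$ together with the common-attachment vertices and the other of the components together with whatever remains, so that $G=K_{p,q}$. In the second, the attachments together with $u,v$ exhibit an induced $C_4$; one shows that this $C_4$ is a central square, that every component of $G$ minus this square is a link of it, and that each such link is long since $G$ is wheel-free (a rich square is wheel-free if and only if it is long), so that $G$ is a long rich square. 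Throughout, a component attaching with three neighbours on some hole, or creating an ISK4 together with the two holes, is ruled out at once; and the hypotheses that $G$ has no clique-cutset and no proper $2$-cutset are precisely what keep the argument from stalling on a small separator and what make the two-sided split genuine.

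The main obstacle is exactly this last step: the case analysis needed to show that purely local constraints around $S$ propagate to a global bipartition or a global rich-square structure. It is natural to prepare for it by establishing, as auxiliary lemmas, the very restricted possible attachments of a connected subgraph onto a hole and onto a $K_{2,2}$ in a \{ISK4, wheel\}-free graph --- results of the same flavour as the attachment lemmas underlying the proof of Theorem~\ref{th:main} --- and to use the intermediate structure theorem for chordless graphs to clear the borderline, line-graph-like subcases. Degenerate configurations such as a universal vertex, a central vertex whose closed neighbourhood is all of $V(G)$, or graphs on very few vertices, should be dealt with by hand before entering the main argument.
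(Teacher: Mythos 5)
Your plan to deduce Theorem~\ref{th:nowheel} from Theorem~\ref{th:main} is backwards with respect to the paper's logic and, more importantly, does not close. In the paper, Theorem~\ref{th:nowheel} is proved first, directly from Lemmas~\ref{l:begin}, \ref{l:decK33} and~\ref{l:mainlinegraphdecomp} (if $G$ is not series-parallel it contains a $K_{3,3}$ or a prism, since wheels are excluded; the $K_{3,3}$ case gives complete bipartite or a clique-cutset, the prism case gives a line graph, a rich square, a clique-cutset or a proper $2$-cutset), and Theorem~\ref{th:main} is then \emph{derived from} Theorem~\ref{th:nowheel}. So invoking Theorem~\ref{th:main} here is circular within this paper; a correct proof has to go through the decomposition lemmas, which is where all the real work is, and your proposal never engages with them.

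Even granting Theorem~\ref{th:main} as a black box, the reduction fails. The star/double-star outcome of Theorem~\ref{th:main} carries essentially no structure you can use, and the intermediate statement you set yourself --- that an \{ISK4, wheel\}-free graph with a star-cutset or double star cutset but no clique-cutset and no proper $2$-cutset must be complete bipartite or a long rich square --- is false. A theta whose three branches all have length at least $3$ is \{ISK4, wheel\}-free, has a double star cutset (the two ends of an internal edge of a branch together with the two branch-vertices), has no clique-cutset and no proper $2$-cutset (every $2$-cutset leaves a path on one side), and is neither complete bipartite nor a long rich square; it is only caught by the series-parallel outcome, which your reduction discarded. So the case you isolate really amounts to reproving the whole theorem for graphs without clique-cutsets and proper $2$-cutsets. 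Finally, the ``heart of the proof'' you describe --- confronting two holes through the cutset, bootstrapping the attachments into a global bipartition or a central square with long links --- is only a sketch: no attachment lemma is stated or proved, and no case analysis is carried out. As it stands the proposal has a genuine gap precisely where the content of the theorem lies.
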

The structure of chordless graphs is elucidated in the following
theorem, which will be proved in Section~\ref{sec:thm:nochord}.  Let
us say that a graph $G$ is \emph{sparse} if for every edge $uv$ of $G$
we have either $\deg(u) \leq 2$ or $\deg(v) \leq 2$.  
\begin{theorem}\label{thm:nochord}
  Let $G$ be a chordless graph.  Then either $G$ is sparse or $G$
  admits a $1$-cutset or a proper $2$-cutset.
\end{theorem}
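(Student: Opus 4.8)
The plan is to prove the contrapositive-flavored statement directly: assume $G$ is a chordless graph that has no $1$-cutset and no proper $2$-cutset, and show that $G$ is sparse, i.e.\ every edge has an endpoint of degree at most $2$. So suppose for contradiction that $G$ contains an edge $uv$ with $\deg(u)\ge 3$ and $\deg(v)\ge 3$. First I would record the elementary consequences of chordlessness around such an edge: since $G$ has no triangle (a triangle is a non-chordless cycle), $N(u)$ and $N(v)$ are disjoint apart from the edge itself, and more importantly $N(u)\sm\{v\}$ is a stable set and $N(v)\sm\{u\}$ is a stable set (any two adjacent neighbors of $u$ would form a triangle), and there is no edge between $N(u)\sm\{v\}$ and $N(v)\sm\{u\}$ other than... wait, such an edge $xy$ with $x\in N(u)$, $y\in N(v)$ would create the $4$-cycle $u\tp x\tp y\tp v\tp u$, which has the chord $uv$, contradiction. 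So the neighborhoods are ``independent and mutually non-adjacent'' in a strong sense.

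Next I would analyze the structure of $G\sm\{u,v\}$. The idea is that $\{u,v\}$ should behave almost like a proper $2$-cutset, and the only way it fails to be one is covered by the ``sparse'' conclusion or leads quickly to a contradiction. Note $uv\in E(G)$, so $\{u,v\}$ is not a proper $2$-cutset in the defined sense because the definition requires $ab\notin E(G)$; hence I cannot invoke the proper $2$-cutset hypothesis for $\{u,v\}$ directly. Instead the key move is: let $x_1,x_2,x_3\in N(u)\sm\{v\}$ be three distinct neighbors of $u$ and $y_1,y_2,y_3\in N(v)\sm\{u\}$ be three distinct neighbors of $v$ (all six vertices distinct by the disjointness above). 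Since $G$ is connected and $2$-connected (no $1$-cutset), for a contradiction I want to build a theta graph or a $K_4$-subdivision-like obstruction — actually, more to the point, I want to find a cycle with a chord. The cleanest route: consider the block/ear structure or simply pick a shortest path $P$ from $\{x_1,x_2,x_3\}$ to $\{y_1,y_2,y_3\}$ in $G\sm\{u,v\}$ (it exists since $G\sm\{u\}$ is connected, so $v$ reaches the $x_i$'s without passing through $u$). Together with $u$ and $v$ this gives a cycle through $u,v$; chordlessness forces $P$ to be very restricted, and using two or three such internally disjoint connections (which exist by Menger, since there is no $1$-cutset between the relevant parts) one produces a cycle that must contain the chord $uv$ or a chord among the $x_i,y_j$.

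More carefully, here is the spine of the argument I would write. Since $\deg(u)\ge 3$ and $\deg(v)\ge 3$ and $G$ is $2$-connected, $G\sm\{u,v\}$ has a component $D$ containing a neighbor of $u$ and a neighbor of $v$ (if every component met only one of them, $\{u,v\}$ would essentially split into pieces each of which is an $(u,v)$-path since the neighborhoods are stable — and then one checks either $G$ is sparse or a proper $2$-cutset or $1$-cutset appears); within $D$ take a minimal connected subtree/path realizing adjacency to $u$ at one end and to $v$ at the other. This yields an induced cycle $C$ through $u$ and $v$. Now use the third neighbor: there must be another neighbor $x_3$ of $u$ (or $y_3$ of $v$) not on $C$, and by $2$-connectedness $x_3$ is joined to $C\sm\{u\}$ by a path avoiding $u$; the first vertex of attachment, together with $C$, produces either a chord of a cycle or a smaller cycle sharing the edge $uv$ — contradicting chordlessness. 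I would organize this as a couple of short claims: (i) the local structure of neighborhoods (stable, mutually anticomplete); (ii) reduction to the case where some component $D$ of $G\sm\{u,v\}$ touches both $u$ and $v$; (iii) deriving a cycle with a chord from the existence of a third neighbor.

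The main obstacle I anticipate is step (ii): carefully handling the case where $G\sm\{u,v\}$ is connected but one still needs to locate the ``third attachment'' without accidentally only re-deriving an induced cycle. The delicate point is that a shortest $(x_i,y_j)$-path plus $uv$ gives an induced cycle (good, no contradiction yet), so one genuinely needs to exploit that $u$ has a \emph{third} neighbor, and argue that this third neighbor, reconnected to the cycle by a path (existing since $\{u,v\}$ is not a cutset, or via Menger giving two disjoint paths from a neighbor set), must hit the cycle at a point creating a chord or a short-circuiting sub-cycle that still contains the edge $uv$. Getting the attachment analysis exhaustive — the reconnecting path could land on $u$, on $v$, or on an interior vertex of the cycle, and each lands us in a configuration that is either non-chordless or reveals a $1$-cutset or proper $2$-cutset — is where the real case-checking lives, and I would expect the write-up to spend most of its length there.
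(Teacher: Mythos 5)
Your setup matches the paper's: pass to the contrapositive, note triangle-freeness (so the two neighborhoods are stable and anticomplete), and observe that $\{u,v\}$ cannot be a cutset, since two components would each supply a $(u,v)$-path and their union would be a cycle with the chord $uv$. The gap is in the core step, which you leave as a sketch, and the mechanism you propose for it does not work as stated. Attaching a \emph{single} third neighbor $x_3$ of $u$ back to the induced cycle $C$ through the edge $uv$ only produces a theta (three internally disjoint paths between $u$ and the landing vertex $w$). A theta is perfectly compatible with chordlessness -- think of $K_{2,3}$ -- and in particular ``a smaller cycle sharing the edge $uv$'' is not a contradiction: chordless cycles through $uv$ are allowed. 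The only landing points that immediately give a chorded cycle are $w=v$ or $w$ adjacent to $u$ or $v$ in the wrong way; for a generic interior landing point you get no contradiction with chordlessness at all, and you also cannot yet exhibit a cutset. So the case analysis you defer to ``where the real case-checking lives'' is not routine bookkeeping; it is the entire proof, and the contradiction you advertise (chordlessness violated by one reattachment) is the wrong target.

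What the paper actually does at this point is use third neighbors of \emph{both} endpoints: a path $Q$ from the third neighbor $c$ of $a$ and a path $R$ from the third neighbor $d$ of $b$ back to the cycle $C=P\cup\{a,b\}$. Chordlessness forces both to land in the interior of $P$ (any landing on $b$, $d$, or on $Q$ yields a cycle with chord $ab$), and forces their landing vertices $u,v$ to occur on $P$ in the order $e,u,v,f$. One then chooses $P,Q,R$ minimizing the length of the segment $uPv$, and the punchline is not a chorded cycle but a cutset: one shows that $\{a,u\}$ separates $ePu\cup cQu$ from the rest, because any escaping path $D$ would either land inside $uPv$ (contradicting the minimality of that segment) or close a cycle having $ab$ as a chord. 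Since $a$ and $u$ are nonadjacent (no clique cutset, which itself follows from chordlessness plus the absence of $1$-cutsets) and both sides are not mere $(a,u)$-paths, $\{a,u\}$ is a \emph{proper} $2$-cutset, contradicting the hypothesis. Your proposal never identifies this candidate cutset, the need for both attachments $Q$ and $R$, or the minimality device that makes the separation argument go through, so as written it does not yet constitute a proof.
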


Theorems~\ref{th:nowheel} and \ref{thm:nochord} can be used to derive
a tight bound on the chromatic number of \{ISK4, wheel\}-free graphs.
\begin{theorem}
  \label{l:WF3c}
  Any \{ISK4, wheel\}-free graph is 3-colorable. 
\end{theorem}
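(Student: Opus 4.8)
The plan is to argue by induction on $|V(G)|$: take a minimal counterexample $G$ (a \{ISK4, wheel\}-free graph that is not $3$-colorable, with $|V(G)|$ least possible) and derive a contradiction from Theorem~\ref{th:nowheel}. First the routine reductions. If $G$ is disconnected or has a vertex of degree at most $2$ it is $3$-colorable by minimality; if $G$ has a clique cutset $K$ then $|K|\le 3$ (an ISK4-free graph has no $K_4$), so one $3$-colors by minimality the two blocks — which are smaller and still \{ISK4, wheel\}-free — and permutes colors so the colorings agree on $K$, again making $G$ $3$-colorable. Hence we may assume $G$ is connected, has minimum degree at least $3$, and has no clique cutset; in particular $G$ has no cut vertex. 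By Theorem~\ref{th:nowheel}, $G$ is then series-parallel, a line graph of a chordless graph of maximum degree at most $3$, complete bipartite, a long rich square, or has a proper $2$-cutset, and I show each possibility leads to a contradiction.

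Three of the basic cases are short. A series-parallel graph has treewidth at most $2$, hence is $3$-colorable, and a complete bipartite graph is $2$-colorable. If $G$ is a long rich square with central square $S=\{u_1,u_2,u_3,u_4\}$ (in this cyclic order), color $u_1$ and $u_3$ with color $1$, $u_2$ with $2$, and $u_4$ with $3$; each component of $G\setminus S$ is a long link $P$, and this coloring of $S$ forces each end of $P$ to a color in $\{2,3\}$, with the two ends receiving distinct colors there, so one extends the coloring greedily along $P$ (no interior vertex of $P$ has a neighbor in $S$), and since distinct links lie in distinct components the result is a proper $3$-coloring. For $G=L(R)$ with $R$ chordless and $\Delta(R)\le 3$ we have $\chi(G)=\chi'(R)$, so it suffices to prove that a chordless graph $R$ with $\Delta(R)\le 3$ satisfies $\chi'(R)\le 3$; this is a subsidiary induction driven by Theorem~\ref{thm:nochord}. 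In the sparse case the vertices of degree $3$ form a stable set (an edge joining two of them would contradict sparsity), so $L(R)$ has maximum degree at most $3$, is not $K_4$, and Brooks' theorem gives $\chi(L(R))\le 3$; a $1$-cutset or a proper $2$-cutset of $R$ is handled by $3$-edge-coloring the two parts and reconciling, at each vertex of the cutset, the at most three incident colors (using a Kempe chain exchange if necessary).

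This leaves the case where $G$ has a proper $2$-cutset $\{a,b\}$, which is the heart of the proof and the step I expect to be the main obstacle — it is the only place where \{ISK4, wheel\}-freeness is used in an essential way. Write $V(G)\setminus\{a,b\}=X\cup Y$ as in the definition, with no edge between $X$ and $Y$ and with neither $G_X:=G[X\cup\{a,b\}]$ nor $G_Y:=G[Y\cup\{a,b\}]$ an $(a,b)$-path. Using the absence of cut vertices, each of $a$ and $b$ has a neighbor in $X$ and in $Y$; consequently $|X|,|Y|\ge 2$ (so $G_X$ and $G_Y$ are proper induced subgraphs of $G$, hence $3$-colorable by minimality) and there is an induced $(a,b)$-path $Q_Y$ with interior in $Y$, and symmetrically an induced $(a,b)$-path $Q_X$ with interior in $X$, each of length at least $2$. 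The crux is the claim that $G_X+ab$, the graph obtained from $G_X$ by adding the edge $ab$, is again \{ISK4, wheel\}-free (and symmetrically $G_Y+ab$). Granting this, $G_X+ab$ is $3$-colorable by minimality (it has fewer vertices than $G$), so $G_X$ has a $3$-coloring in which $a$ and $b$ get different colors, and likewise $G_Y$; since there is no edge between $X$ and $Y$, one permutes colors so the two colorings agree on $\{a,b\}$ and glues them, obtaining a $3$-coloring of $G$ — the desired contradiction.

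To prove the claim, suppose $G_X+ab$ contains an ISK4 $H$ or a wheel $W$; since $G$ contains neither, this subgraph must contain both $a$ and $b$ and hence uses the edge $ab$. If $ab$ lies on one of the subdivided-edge paths of $H$, or is a rim edge of $W$, substitute the path $Q_Y$ for the edge $ab$: the interior of $Q_Y$ lies in $Y$, hence is anticomplete to $X$ and meets the rest of the subgraph only at $a$ and $b$, so one obtains an induced subdivision of $K_4$, respectively an induced wheel (its hub, being a vertex of $X\cup\{a,b\}$ that is not on the rim, lies in $X$ and is anticomplete to the interior of $Q_Y$), now inside $G$ — a contradiction. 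The only remaining possibility is that $ab$ is a spoke of $W$, say with hub $a$ and $b$ on the rim $C$ (an induced cycle of $G$, as $C$ avoids the edge $ab$); were $a$ adjacent in $G$ to three vertices of $C$ we would get a wheel in $G$, so $a$ has exactly two neighbors $c_1,c_2$ on $C$, and then $G[\{a\}\cup V(C)\cup\mathrm{int}(Q_Y)]$ is an induced subdivision of $K_4$ with branch vertices $a,b,c_1,c_2$: its six branch paths are the edges $ac_1$ and $ac_2$, the path $Q_Y$ from $a$ to $b$, and the three arcs into which $c_1,c_2,b$ split the cycle $C$, and there is no additional edge because $a$ has no other neighbor on $C$, $a\not\sim b$, and $\mathrm{int}(Q_Y)\subseteq Y$ is anticomplete to $V(C)\setminus\{b\}\subseteq X$. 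This contradiction proves the claim, and with it the theorem.
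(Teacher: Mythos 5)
Your overall strategy is the same as the paper's: induction on $|V(G)|$, elimination of clique cutsets, a gluing argument at a proper $2$-cutset based on the fact that adding the edge $ab$ preserves the class, and then Theorem~\ref{th:nowheel} plus a direct colouring of each basic class. Your key claim that $G_X+ab$ is \{ISK4, wheel\}-free is correct, and your proof of it (substituting $Q_Y$ for $ab$, with the spoke case yielding an ISK4 on corners $a,b,c_1,c_2$) is sound; this is essentially Lemma~\ref{l:WF1} of the paper, and your direct argument even shows that the paper's extra bookkeeping with $K_{3,3}$ is not needed at this step. The series-parallel, complete bipartite and long-rich-square cases are fine.

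The genuine gap is in the line graph case, namely the proper $2$-cutset case of your subsidiary induction for $3$-edge-colouring a chordless graph $R$ with $\Delta(R)\le 3$. ``Reconciling, at each vertex of the cutset, the at most three incident colors (using a Kempe chain exchange if necessary)'' is not a proof, and as a plan it can fail, because the constraints at $a$ and at $b$ have to be met simultaneously. Concretely, suppose $a$ and $b$ each send two edges into $X$ and one into $Y$, and their $Y$-neighbours coincide in a vertex $y$. Any gluing must give $ay$ and $by$ the colours missing at $a$ and at $b$ in the colouring of $R[X\cup\{a,b\}]$, and these two colours must differ since $ay$ and $by$ are adjacent. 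Now identify the three colours with the nonzero elements of $\mathbb{Z}_2\times\mathbb{Z}_2$ and sum the colours incident to each vertex: degree-$3$ vertices contribute $0$, so if every vertex of $X$ had degree $3$ in $R[X\cup\{a,b\}]$, then in \emph{every} $3$-edge-colouring of that side the missing colour at $a$ would equal the missing colour at $b$. No colour permutation and no Kempe exchange inside that side can change this, and recolouring the $Y$-side cannot help since $ay, by$ are forced. So your local reconciliation scheme has no answer to such a configuration; ruling it out (or avoiding it) requires a structural argument, which is exactly what the paper supplies in case (c) of Lemma~\ref{l:WF3c-lg}: using $\Delta\le 3$ (and possibly replacing the cutset by $\{a,b_2\}$) it first arranges that $a$ and $b$ each have exactly \emph{one} neighbour in the chosen side $A_1$; it then glues via marker vertices $x_1$ (adjacent to $a_1,b_1$) and $x_2$ (adjacent to $a,b$), which force the two boundary edges to receive distinct colours in each piece so that a single colour permutation aligns them; and it verifies, via a chordless $(a,b)$-path through the other side, that the marker-augmented pieces are still chordless, so the induction applies. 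Without these ingredients (single-neighbour reduction, markers, preservation of chordlessness) your edge-colouring step is incomplete, even though the rest of your argument is correct.
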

Theorem~\ref{l:WF3c} will be proved in Section~\ref{s:WF3c}.  This
theorem is tight as shown by the graph on Figure~\ref{fig:C5PlusV}.

\begin{figure}[h]
  \centering
  \includegraphics[scale=0.5]{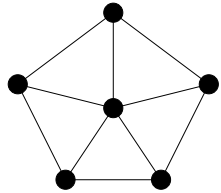}
  \caption{Example of an ISK4-free graph with chromatic number 4}
  \label{fig:C5PlusV}
\end{figure}

Gy\'arf\'as~\cite{gyarfas:perfect} defines a graph $G$ to be
\emph{$\chi$-bounded} with \emph{$\chi$-bounding function $f$} if for
all induced subgraphs $G'$ of $G$ we have $\chi(G') \leq
f(\omega(G'))$.  A class of graphs is \emph{$\chi$-bounded} if there
exists a $\chi$-bounding function that holds for all graphs of the
class.  Scott~\cite{scott:tree} conjectured that for any graph $H$,
the class of those graphs that do not contain any subdivision of $H$
as an induced subgraph is $\chi$-bounded.  This conjectured was
disproved by Pawlick et al.\ \cite{pawlikEtAl:scott}.  It still remains
to figure out for which $H$'s the statement conjectured by Scott is
true.  As noted by Scott~\cite{scott:pc}, some of our results can be
combined with a theorem of K\"uhn and Osthus~\cite{kuhnOsthus:04} to
prove his conjecture in the particular case of $K_4$.  Note that being
$\chi$-bounded for the class of ISK4-free graphs means having the
chromatic number bounded by a constant (because $K_4$ is a particular
ISK4).

\begin{theorem}[K\"uhn and Osthus~\cite{kuhnOsthus:04}]
  \label{th:KO}
  For every graph $H$ and every $s \in \N$ there exists $d = d(H, s)$ such
  that every graph $G$ of average degree at least $d$ contains either a
  $K_{s, s}$ as a subgraph or an induced subdivision of $H$.
\end{theorem}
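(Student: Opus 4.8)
The plan is to reduce to the complete‑graph case and then combine a classical ``density forces a large topological clique'' theorem with a cleaning argument; it is in the cleaning step that the $K_{s,s}$‑freeness is spent. For the reduction, put $r=|V(H)|$ and let $H^{+}$ be the graph obtained from the complete graph on $V(H)$ by subdividing exactly once every pair that is a non‑edge of $H$. Then $H^{+}$ is itself a subdivision of $K_r$; every subdivision of $K_r$ in which all branch paths have length at least $2$ is a subdivision of $H^{+}$; and, keeping the $r$ branch vertices that correspond to $V(H)$ together with the branch paths that correspond to edges of $H$, one checks that every induced subdivision of $H^{+}$ contains an induced subdivision of $H$. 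So it suffices to prove: for all $r,s$ there is $d$ such that every $K_{s,s}$‑free graph of average degree at least $d$ contains an induced subdivision of $K_r$ all of whose branch paths have length at least $2$.

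Next I would set up a big (not yet induced) subdivision. A graph of average degree at least $d$ has a subgraph of minimum degree at least $d/2$, still $K_{s,s}$‑free, so assume $\delta(G)$ is large. By the theorem of Bollob\'as--Thomason and Koml\'os--Szemer\'edi there is a function $g$ with: minimum degree at least $g(t)$ already forces a subdivision of $K_t$. Apply this with $t=t(r,s)$ huge (to be fixed only at the end) and $d\ge 2g(t)$, obtaining branch vertices $b_1,\dots,b_t$ and branch paths $P_{ij}$. Repeatedly replacing a branch path that has a chord by a strictly shorter subpath, we may assume each $P_{ij}$ is an induced path of $G$; and, since large average degree also forces subdivisions with long branch paths, we may moreover assume every $P_{ij}$ has length at least a prescribed $\ell=\ell(r,s)$ — this is what will let the $K_{s,s}$ bound bite.

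The heart of the proof is the extraction: out of the $t$ branch vertices, choose $r$ of them, $b_{i_1},\dots,b_{i_r}$, so that the $\binom{r}{2}$ branch paths joining them — after at most a little more local shortcutting — induce a subdivision of $K_r$. The only obstructions are (i) a chosen branch vertex adjacent to the interior of a non‑incident chosen branch path, and (ii) an interior vertex of one chosen branch path adjacent to an interior vertex of another. Both are controlled by $K_{s,s}$‑freeness through a K\H{o}v\'ari--S\'os--Tur\'an / double‑counting estimate: for any vertex set $U$, at most $(s-1)\binom{|U|}{s}$ vertices have $s$ or more neighbours in $U$, and a much stronger bound holds once $|U|$ is large (applied here with $U$ an interior of a long branch path). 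Feeding such estimates into a greedy (or Ramsey/Tur\'an‑type) selection over the $t$ branch vertices and the long branch paths shows that, provided $\ell$ is large in terms of $s$ and $t$ large in terms of $r$ and $\ell$, a clean choice of $r$ branch vertices exists; unwinding the constants through $g$ then yields the required $d=d(r,s)=d(H,s)$.

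The reduction and the density input are routine, so essentially all of the difficulty lies in this extraction, and that is the step I expect to be the main obstacle. What makes it delicate is precisely that the branch paths are long, so a single vertex could a priori meet vertices on very many of them, and the $K_{s,s}$‑free bound only becomes effective when ``interference'' is measured against sets large enough for K\H{o}v\'ari--S\'os--Tur\'an to be useful; the work is in organising the bookkeeping so that each branch path and each branch vertex can be shown to conflict with only a controlled number of branch paths, and then carrying out the selection. This is also the only place where $K_{s,s}$‑freeness, rather than mere large average degree, is used, which is exactly why $d$ must depend on $s$ as well as on $H$.
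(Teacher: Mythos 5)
This statement is quoted by the paper from K\"uhn and Osthus~\cite{kuhnOsthus:04}; the paper itself contains no proof of it, so your sketch has to be measured against the actual argument in the literature. Your reduction via $H^{+}$ to finding an induced subdivision of $K_r$ with all branch paths of length at least two is correct and standard, and passing to a subgraph of large minimum degree and invoking Bollob\'as--Thomason/Koml\'os--Szemer\'edi is fine. The genuine gap is exactly where you place ``the heart of the proof'': the extraction step is not an argument, and the tools you name cannot carry it. $K_{s,s}$-freeness limits how many vertices can have $s$ or more neighbours in one fixed set $U$, but it does not prevent a single interior vertex of one branch path from having \emph{one} neighbour in the interior of almost every other branch path (a star creates no $K_{s,s}$). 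Consequently the conflict relation among the $\binom{t}{2}$ branch paths can be essentially complete; no K\H{o}v\'ari--S\'os--Tur\'an or double-counting estimate shows that each path or branch vertex interferes with only a bounded number of paths, so the proposed greedy/Ramsey selection of $r$ clean branch vertices does not follow, and ``a little more local shortcutting'' cannot repair it, since the offending vertex lies on the path itself and can only be bypassed through its neighbours on the very paths one is trying to keep. In addition, the auxiliary claim that large average degree alone forces a topological $K_t$ in which every branch path has length at least a prescribed $\ell$ is asserted without proof (for $\ell=2$ it follows from a Ramsey argument plus a clique, but for general $\ell$ it is not an off-the-shelf fact).

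The known proof does not proceed by cleaning a huge topological clique with KST-type counting. The core of K\"uhn and Osthus's paper is a theorem asserting that every $K_{s,s}$-free graph of sufficiently large average degree contains an induced subgraph of large average degree and girth at least six; only inside such a locally sparse subgraph (independent neighbourhoods, any two vertices with at most one common neighbour) is the interference between paths controllable enough to build an induced subdivision, and both that girth reduction and the subsequent construction are substantial arguments. So the step you defer is not bookkeeping but the theorem's actual content, and your proposal as it stands does not prove the statement.
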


\begin{theorem}[Scott \cite{scott:pc}]
  \label{th:ScottK4}
  There exists a constant $c$ such that any ISK4-free graphs is
  $c$-colorable.
\end{theorem}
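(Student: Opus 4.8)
The plan is to combine the K\"uhn--Osthus theorem (Theorem~\ref{th:KO}) with the structural information behind Theorem~\ref{th:main}, organised as an induction on $|V(G)|$. First I would apply Theorem~\ref{th:KO} with $H=K_4$ and $s=3$ to fix a constant $d:=d(K_4,3)$ such that every graph of average degree at least $d$ contains $K_{3,3}$ as a subgraph or an induced subdivision of $K_4$. The claim is then that every ISK4-free graph is $d$-colorable, which I would prove by induction on the number of vertices, splitting into two cases according to whether or not $G$ contains a $K_{3,3}$ subgraph.

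\emph{The sparse case.} If $G$ is ISK4-free and has no $K_{3,3}$ subgraph, then its average degree is $<d$ (otherwise Theorem~\ref{th:KO} would produce a $K_{3,3}$ subgraph or an ISK4, both excluded), so $G$ has a vertex $v$ of degree at most $d-1$; since $G-v$ is ISK4-free with fewer vertices it is $d$-colorable by induction, and the coloring extends to $v$. (Equivalently: ISK4-free graphs with no $K_{3,3}$ subgraph form a class closed under induced subgraphs in which every graph has a vertex of degree $<d$, hence every such graph is $(d-1)$-degenerate and $d$-colorable.)

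\emph{The case with a $K_{3,3}$.} If $G$ is ISK4-free and contains a $K_{3,3}$ subgraph, then $G$ is neither series-parallel ($K_{3,3}$ has a $K_4$ minor) nor the line graph of a graph of maximum degree at most three (such a line graph has no $K_{3,3}$ subgraph, by a short check on incidences in the root: a vertex of the root incident to three prescribed edges is already saturated, and the argument quickly forces two of the six edges to coincide). So by Theorem~\ref{th:main}, $G$ has a clique-cutset, a proper $2$-cutset, a star-cutset or a double-star cutset. Here I would invoke a \emph{sharper} conclusion, to be read off from the first stage of the proof of Theorem~\ref{th:main} (the stage that disposes of graphs containing $K_{3,3}$): in this situation $G$ is complete bipartite, or has a clique-cutset, or has a proper $2$-cutset --- the star- and double-star cutsets are not needed when a $K_{3,3}$ is present, entering only at later stages, when the basic graphs under consideration are prisms, the octahedron and wheels (a wheel-hub being a natural star-cutset center). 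Granting this, I recompose: a complete bipartite graph is $2$-colorable; if $G=G_1\cup G_2$ with $V(G_1)\cap V(G_2)=K$ a clique, the $G_i$ are smaller ISK4-free graphs, hence $d$-colorable, and since $K$ is a clique one permutes colors so the two $d$-colorings agree on $K$; if $\{a,b\}$ is a proper $2$-cutset (so $ab\notin E(G)$) and $G=G_1\cup G_2$ with $V(G_1)\cap V(G_2)=\{a,b\}$, set $G_i^+:=G_i+ab$. Each side contains an induced $(a,b)$-path meeting the other side only in $\{a,b\}$ (otherwise a clique-cutset is available and we are in the previous case); substituting such a path for the edge $ab$ carries any induced subdivision of $K_4$ in $G_i^+$ back to one in $G$, so each $G_i^+$ is ISK4-free, and it has fewer vertices than $G$, hence is $d$-colorable. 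A $d$-coloring of $G_i^+$ is a $d$-coloring of $G_i$ with distinct colors at $a$ and $b$; permuting colors on one side so the two colorings agree on $\{a,b\}$ yields a $d$-coloring of $G$. This closes the induction, with $c:=d(K_4,3)$.

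\emph{The main obstacle.} It is precisely the sharp structural statement used above: that an ISK4-free graph containing a $K_{3,3}$ subgraph is complete bipartite or has a clique-cutset or a proper $2$-cutset, i.e.\ that the star- and double-star cutsets of Theorem~\ref{th:main} are superfluous once a $K_{3,3}$ is present. This has to be extracted from the $K_{3,3}$-step of the proof of Theorem~\ref{th:main}, not merely from its statement. It is genuinely needed, because a coloring argument does not survive decomposition along a general star cutset --- one cannot force the colorings of the two sides to agree on the neighborhood of the center --- so the induction would break exactly there; by contrast clique-cutsets, proper $2$-cutsets, complete bipartite graphs and low-degree vertices are all harmless, and the rest of the argument is routine.
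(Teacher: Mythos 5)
There is a genuine gap, concentrated exactly at the step you flag as ``the main obstacle''. First, the sharper structural statement you propose to ``read off'' is false as stated: the graph $K_{1,2,3}$ is ISK4-free, contains $K_{3,3}$ as a (non-induced) subgraph, is not complete bipartite, and is $3$-connected, so it has neither a clique-cutset nor a proper $2$-cutset. The correct statement is Lemma~\ref{l:decK33}: an ISK4-free graph containing an \emph{induced} $K_{3,3}$ is a thick complete bipartite \emph{or complete tripartite} graph, or has a clique-cutset of size at most three. The omission of the tripartite outcome is harmless for your purposes (such graphs are $3$-colorable), but it shows the claim cannot simply be granted; it must be quoted in the form the paper proves it, and it then only yields clique-cutsets, not proper $2$-cutsets (your proper-$2$-cutset machinery is not needed at this stage).

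The deeper problem is your choice $s=3$ in Theorem~\ref{th:KO}: it produces $K_{3,3}$ only as a not-necessarily-induced subgraph, whereas Lemma~\ref{l:decK33} needs an induced one. If one side of the subgraph copy carries edges, then (after ruling out a triangle in a side, which gives a $K_4$, and a single edge in a side, which together with the three vertices of the other side gives an ISK4) you are left with the six vertices inducing a $K_{1,2,3}$, i.e.\ a wheel --- precisely the situation where the paper has nothing better than star or double star cutsets, which, as you yourself observe, do not recombine under coloring. So the induction breaks at exactly the point you identified, and no statement in the paper fills the hole for $s=3$. The paper's proof repairs this with one extra trick: take $s=6$, so that $G$ contains a $K_{6,6}$ subgraph; each side is triangle-free (a triangle in a side plus any vertex of the other side is a $K_4$), hence by Ramsey ($R(3,3)=6$) each side contains a stable set of size $3$, and these two stable sets induce a genuine $K_{3,3}$. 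From there the paper argues as you do in your clique-cutset step: a minimum-degree (degeneracy) bound from Theorem~\ref{th:KO} when no $K_{6,6}$ subgraph is present, Lemma~\ref{l:decK33} and recombination across the clique-cutset otherwise, with $c=d(K_4,6)$. Apart from the $s=6$/Ramsey device and the use of Lemma~\ref{l:decK33} in place of your unproved claim, your overall strategy coincides with the paper's.
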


Theorem~\ref{th:ScottK4} will be proved in Section~\ref{sec:K33}.  In
fact, we do not know any example of an ISK4-free graph whose chromatic
number is 5 or more.  We propose the following conjecture.

\begin{conjecture}
  Any ISK4-free graph is 4-colorable. 
\end{conjecture}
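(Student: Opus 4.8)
The natural line of attack is to prove the conjecture by induction on $|V(G)|$, using the decomposition of Theorem~\ref{th:main} exactly as Theorem~\ref{l:WF3c} is obtained from Theorem~\ref{th:nowheel}, but now for the full class. The observation that makes $4$ a believable bound is that every ISK4-free graph $G$ satisfies $\omega(G)\le 3$: since $K_4$ is itself a (trivial) subdivision of $K_4$, an induced $K_4$ would be an ISK4. Thus we always have one colour to spare over the clique number, which is the slack one expects to need when reconciling colourings across small cutsets, and it is why $4$ (rather than $3$) is the plausible target in the presence of wheels.

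The two basic classes are straightforward. A series-parallel graph has treewidth at most $2$, hence is $2$-degenerate and therefore $3$-colourable. If $G=L(H)$ with $\Delta(H)\le 3$, then Vizing's theorem gives $\chi'(H)\le \Delta(H)+1\le 4$, and a proper edge-colouring of $H$ is precisely a proper vertex-colouring of $L(H)$, so $G$ is $4$-colourable. The first two cutset outcomes are also manageable. For a clique-cutset $K$, we have $|K|\le\omega(G)\le 3$, so in any $4$-colouring the vertices of $K$ receive at most three distinct colours; one colours the two blocks by induction and permutes colours within one block so that the colourings agree on $K$. For a proper $2$-cutset $\{a,b\}$ with $ab\notin E(G)$, I would replace each side by a suitable short path between $a$ and $b$, check that the resulting marked graphs remain ISK4-free (so that induction applies) and are smaller, and then glue; here the nondegeneracy condition that neither side is an $(a,b)$-path is what prevents the reduction from being trivial, and the fourth colour provides the freedom needed to arrange that $a$ and $b$ receive matching colours on the two sides.

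The main obstacle, and the reason the statement remains only a conjecture, is the last outcome of Theorem~\ref{th:main}: star-cutsets and double star cutsets. Unlike clique-cutsets and proper $2$-cutsets, these admit no clean gluing operation for colourings. When such a cutset $S$ is removed, the blocks overlap on $S$, and both the internal adjacencies of $S$ and the external neighbourhoods of its vertices impose constraints that cannot in general be reconciled by permuting colours, while no marker construction is known that simultaneously preserves ISK4-freeness and records enough colouring information to glue. This is exactly the point at which the argument for the wheel-free subclass goes through unobstructed, since Theorem~\ref{th:nowheel} decomposes using only clique-cutsets and proper $2$-cutsets, whereas the general case stalls.

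Consequently, I expect that a proof of the conjecture will require one of two routes. The first is to strengthen Theorem~\ref{th:main} into a decomposition theorem that avoids star and double star cutsets entirely (the authors explicitly leave the existence of such a \textbf{more powerful decomposition theorem} as an open problem); with such a theorem the inductive gluing sketched above would complete the argument. The second is to abandon decomposition altogether and argue directly from the $\omega(G)\le 3$ structure, for instance via a discharging or global degeneracy-type argument that exhibits, in every ISK4-free graph, a colouring-reducible configuration and thereby bypasses the need to glue colourings across the problematic cutsets. My assessment is that the star-cutset and double star cutset cases are the genuine difficulty: they are the sole gap between the established $3$-colourability of the wheel-free subclass and the conjectured $4$-colourability of the whole class.
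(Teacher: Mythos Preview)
The statement is a \emph{conjecture} in the paper, not a theorem; the paper offers no proof, only the remark that no ISK4-free graph with chromatic number $5$ or more is known. Your submission is therefore not a proof either, and you acknowledge this explicitly. So there is nothing to compare in the sense of the task.

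That said, your diagnosis of the obstruction is accurate and consistent with the paper's own framing. The paper notes in the introduction that star-cutsets and double star cutsets are ``difficult to use in algorithms'' and leaves open the existence of a stronger decomposition theorem; you correctly identify these same cutsets as the point where an inductive colouring argument modelled on the proof of Theorem~\ref{l:WF3c} breaks down. Your treatment of the basic classes and of clique-cutsets is sound, and your sketch for proper $2$-cutsets mirrors the argument actually used for Theorem~\ref{l:WF3c} (reduce each side via a marker path, invoke Lemma~\ref{l:WF1}-style closure, glue). The only place to be careful is your claim that the fourth colour ``provides the freedom needed'' at a proper $2$-cutset: in the wheel-free proof three colours already suffice there, so the extra colour is not what rescues that step---it is genuinely needed only for the line-graph basic class (via Vizing) and, presumably, for whatever replaces the star-cutset argument.

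In short: you have not proved the conjecture, but neither has the paper; your analysis of where the difficulty lies is correct.
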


Our results yield several algorithms described in Section~\ref{sec:algo}.

\begin{theorem}\label{th:algo}
  There exists an algorithm of complexity $O(n^2m)$ that decides whether
  a given graph is \{ISK4, wheel\}-free.

  There exists an algorithm of complexity $O(n^2m)$ whose input is a
  graph with no ISK4 and no wheel and whose output is a 3-coloring of
  its vertices.  
\end{theorem}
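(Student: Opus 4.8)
The plan is to turn Theorem~\ref{th:nowheel} into a recursive decomposition algorithm and to run both the recognition and the $3$-coloring on the pieces it produces. Given $G$, repeatedly search for a clique-cutset or a proper $2$-cutset. A clique-cutset $K$ splits $G$ into the atoms $G[C\cup K]$ over the components $C$ of $G\sm K$ (no marker needed). A proper $2$-cutset $\{a,b\}$ with sides $X,Y$ splits $G$ into $G[X\cup\{a,b\}]$ and $G[Y\cup\{a,b\}]$, each augmented by a fixed short marker path between $a$ and $b$. First I would prove that these operations are faithful for the class $\mathcal{C}$ of \{ISK4, wheel\}-free graphs: $G\in\mathcal{C}$ if and only if every piece produced is in $\mathcal{C}$. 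For clique-cutsets this is immediate, since an ISK4 and a wheel are both $2$-connected, hence lie inside a single atom, and each atom is an induced subgraph of $G$. For a proper $2$-cutset it is the standard argument: an ISK4 or wheel of $G$ meeting both sides must cross the cutset along a single induced $(a,b)$-subpath contained in one side, which can be replaced by the marker path; and conversely a piece using the marker yields an obstruction in $G$ by re-expanding the marker along an induced $(a,b)$-path on the other side. One checks that a marker with a single interior vertex is short enough to keep the pieces small and ``inert'' enough to create no spurious ISK4 or wheel. Since neither side of a proper $2$-cutset is an $(a,b)$-path, each side has at least two internal vertices, so every piece is strictly smaller than $G$; hence the recursion halts and produces pieces of total size $O(n+m)$.

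When a piece has no clique-cutset and no proper $2$-cutset, Theorem~\ref{th:nowheel} says it is basic: series-parallel, the line graph of a chordless graph of maximum degree at most $3$, complete bipartite, or a long rich square. Each of these is recognizable in polynomial (near-linear) time: series-parallelness by the classical linear-time reduction or SPQR-tree algorithm; ``line graph of a chordless subcubic graph'' by running the linear-time line-graph recognition of Roussopoulos/Lehot to obtain a root $R$ and then checking $\Delta(R)\le 3$ and that $R$ is chordless; ``complete bipartite'' trivially; and ``long rich square'' by exhibiting an induced square $S$ and verifying that each component of $G\sm S$ is a long link of $S$. So the recognition algorithm answers ``yes'' exactly when every leaf of the decomposition tree passes one of these tests, and correctness is precisely the faithfulness statement (the ``if'' direction also using that each basic graph lies in $\mathcal{C}$). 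For the running time, a clique-cutset is found in $O(nm)$ (Tarjan/Whitesides) and a proper $2$-cutset is found in $O(n^2(n+m))$ by scanning all pairs; summing over the decomposition tree, and using that the pieces partition $G$ up to bounded overlap so that the work at each level is controlled by the work on $G$, yields the claimed $O(n^2m)$ bound.

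For $3$-coloring we reuse the decomposition tree and color bottom-up. At a leaf: a series-parallel graph is $2$-degenerate, hence greedily $3$-colorable; a complete bipartite graph is $2$-colored directly; a long rich square is $3$-colored from its explicit structure ($2$-color the central square, then extend along each long link, which is an induced path); and $L(R)$ with $R$ chordless and subcubic is $3$-colored by producing a proper $3$-edge-coloring of $R$ — which exists and is found recursively via Theorem~\ref{thm:nochord} (a sparse subcubic chordless graph is easily $3$-edge-colored, and a $1$- or $2$-cutset of $R$ reduces the instance). Note we cannot simply invoke Vizing's theorem here, which would only give four colors. At an internal node we merge the children's colorings: across a clique-cutset we permute colors in one branch so that its rainbow-colored cutset agrees with the other; across a proper $2$-cutset $\{a,b\}$ we only need the two branches to agree on whether $a$ and $b$ receive the same color, and a consistent choice exists because $G$ itself is $3$-colorable by Theorem~\ref{l:WF3c} — concretely, for each branch one computes (recursively) whether it has a $3$-coloring with $a,b$ equal and whether it has one with $a,b$ distinct, then picks a common option, after which a color permutation makes the two branches agree on $\{a,b\}$. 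The accounting is the same as for recognition, giving $O(n^2m)$.

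The step I expect to be the main obstacle is the proper $2$-cutset: fixing a marker path that is simultaneously short enough to keep the recursion shrinking, inert enough to neither create nor destroy an ISK4 or a wheel, and (for coloring) to impose the correct identification/parity constraint on $\{a,b\}$, and then proving the two-way faithfulness without gaps. Once that is in place, everything else — handling the four basic classes and the complexity bookkeeping over the decomposition tree — is routine; the only other delicate point is that the edge-$3$-coloring of chordless subcubic graphs must go through Theorem~\ref{thm:nochord} rather than through Vizing's theorem.
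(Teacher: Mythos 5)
Your overall architecture is the same as the paper's (clique-cutset and proper $2$-cutset decomposition with a marker vertex, then leaf tests/colorings for the four basic classes of Theorem~\ref{th:nowheel}), and your faithfulness argument for the marker is essentially the one the paper relies on. But two steps, as written, do not deliver the statement. First, the complexity: you find a proper $2$-cutset by scanning all pairs, i.e.\ $O(n^2(n+m))$ per node of the decomposition tree; since the tree has $\Theta(n)$ nodes whose individual sizes need not shrink quickly, no ``bounded overlap'' argument brings this down to $O(n^2m)$ --- the honest total of your scheme is about $O(n^3m)$. The paper instead detects a proper $2$-cutset in $O(nm)$ per node (for each vertex $v$, compute the cutvertices and blocks of $G\sm v$ by DFS and test whether some cutvertex $u$ makes $\{u,v\}$ a proper $2$-cutset), and then $O(n)$ nodes times $O(nm)$ gives the claimed bound. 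Without this (or an equivalent) subroutine your complexity claim is unsupported.

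Second, the coloring merge across a proper $2$-cutset is a genuine gap. You keep the marker and propose to decide, recursively, whether each side admits a $3$-coloring with $c(a)=c(b)$ and one with $c(a)\neq c(b)$, then pick a common option. But the machinery you have produces \emph{some} $3$-coloring of each basic leaf (Lemmas~\ref{l:WF3c-sp}, \ref{l:WF3c-rs}, \ref{l:WF3c-lg}); none of it answers constrained questions of the form ``is there a $3$-coloring with a prescribed relation on $a,b$?'', and the natural encoding of the constraint $c(a)=c(b)$ (identifying $a$ with $b$) can leave the class, so the recursion does not apply to it. The paper avoids the issue entirely: following the proof of Theorem~\ref{l:WF3c}, it replaces the other side by a chordless $(a,b)$-path and \emph{reduces that path to an edge} $ab$; Lemma~\ref{l:WF1} guarantees the resulting pieces are still \{ISK4, wheel, $K_{3,3}$\}-free, and since $a,b$ are adjacent in both pieces, every $3$-coloring of a piece gives them distinct colors, so the two colorings merge by a mere permutation of color classes. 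You should adopt that construction (or prove a constrained-coloring lemma you do not currently have). A smaller slip of the same kind: your leaf coloring of a long rich square (``$2$-color the central square, then extend along each link'') fails when a long link has exactly two vertices --- both link ends would be forced to color $3$ and are adjacent; the paper's Lemma~\ref{l:WF3c-rs} uses three colors on the central square precisely to avoid this. The rest (recognition of the basic classes, the $3$-edge-coloring of chordless subcubic roots via Theorem~\ref{thm:nochord} rather than Vizing, the clique-cutset bookkeeping) matches the paper.
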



\section{Series-parallel graphs}

\begin{theorem}[Duffin~\cite{duffin:SP}, Dirac~\cite{dirac:SP}]
  \label{th:dirac}
  A graph is series-parallel if and only if it contains no
  subdivision of $K_4$ as a (possibly non-induced) subgraph.
\end{theorem}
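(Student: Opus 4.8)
The plan is to treat the two implications separately. The forward one (series-parallel implies no subdivision of $K_4$) is routine; the reverse one reduces, modulo one classical lemma, to a short induction on $|V(G)|+|E(G)|$.

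For the ``only if'' direction I would induct on the number of operations used to build $G$ from a forest. Recall that a subdivision of $K_4$ is a simple graph of minimum degree at least $2$ in which only the four branch vertices have degree greater than $2$ (namely degree $3$). A forest contains no such subgraph. Adding a parallel edge cannot produce one, since a subdivision of $K_4$ has girth at least $3$ and so meets each parallel class in at most one edge. And if $G'$ is obtained from $G$ by subdividing an edge $uv$ with a new vertex $w$, then in any subdivision $S$ of $K_4$ inside $G'$ the vertex $w$ has degree at most $2$; if it is $2$, then $w$ is an internal vertex of a subdivided edge of $S$ and suppressing $w$ gives a subdivision of $K_4$ inside $G$; if it is at most $1$, then $w\notin V(S)$ because $S$ has minimum degree $2$, and $S$ already lies in $G$. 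Hence no series-parallel multigraph, and in particular no series-parallel graph, contains a subdivision of $K_4$.

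For the ``if'' direction I would prove, by induction on $|V(G)|+|E(G)|$, the slightly stronger statement that every \emph{multigraph} with no subdivision of $K_4$ is series-parallel; the theorem is the special case where $G$ is simple. If $G$ is edgeless it is a forest. If $G$ has two parallel edges, delete one; if $G$ has a vertex of degree at most $1$, delete it; in each case the smaller multigraph is series-parallel by induction and $G$ is recovered from it by adding a parallel edge, respectively a pendant edge (or a disjoint $K_1$) --- operations under which the class of series-parallel multigraphs is visibly closed. If $G$ has a vertex $v$ of degree exactly $2$ --- its two neighbours $a,b$ are then distinct, as $G$ is simple at this point --- I would \emph{suppress} $v$: set $G'=(G-v)+ab$, the added edge being parallel to the existing one if $ab\in E(G)$. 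Then $G'$ has no subdivision of $K_4$: one avoiding the added edge already lies in $G-v\subseteq G$, and one using the added edge becomes a subdivision of $K_4$ in $G$ once that edge is replaced by the path $a-v-b$. By induction $G'$ is series-parallel, and $G$, obtained from $G'$ by subdividing the added edge, is series-parallel as well. The only configuration these reductions do not reach is that $G$ is simple of minimum degree at least $3$ --- and that is excluded by the hypothesis together with the lemma below.

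The hard part, the one step that is not bookkeeping, is the classical lemma of Dirac that \emph{every graph of minimum degree at least $3$ contains a subdivision of $K_4$}. I would obtain it from a longest cycle $C$ of $G$ (which exists as $\delta(G)\ge 2$): a pair of ``crossing'' chords of $C$, that is chords $xy$ and $zt$ with $x,z,y,t$ in this cyclic order --- or, more generally, two internally disjoint paths attached to $C$ in this crossed pattern --- immediately gives a subdivision of $K_4$ with branch vertices $x,z,y,t$; the work is to show that $\delta(G)\ge 3$ together with the maximality of $C$ forces such a pattern, by analysing a vertex outside $C$ or a vertex of $C$ carrying a neighbour other than its two cycle-neighbours. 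This lemma, together with the small checks that series-parallel multigraphs are closed under disjoint unions, pendant edges, parallel edges and subdivisions, is where all the care is needed; the rest of the argument is immediate.
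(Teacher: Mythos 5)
The paper does not actually prove this statement: it is quoted as a classical theorem of Duffin and Dirac, with references only, so your attempt has to stand on its own rather than be compared with an in-paper argument. Your overall plan is the standard one (an easy induction for the ``only if'' direction; for the ``if'' direction the reductions \emph{delete a parallel edge}, \emph{delete a vertex of degree at most one}, \emph{suppress a vertex of degree two}, finishing with Dirac's lemma that minimum degree three forces a subdivision of $K_4$). The forward direction is correct, and so are the parallel-edge and suppression reductions: in those two cases $G$ is rebuilt from the smaller multigraph using only the two defining operations, and your check that suppression cannot create a subdivision of $K_4$ is right. (A minor point: the strengthened statement should say \emph{loopless} multigraph, since a loop contains no subdivision of $K_4$ but can never be produced from a forest by the two operations.)

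Two steps that you label as routine are, however, genuine gaps. First, Dirac's lemma itself --- every simple graph of minimum degree at least $3$ contains a subdivision of $K_4$ --- is the mathematical heart of the theorem, and you only sketch it: saying that ``the work is to show that $\delta\ge 3$ together with the maximality of $C$ forces a crossing pattern'' states the task rather than performing it. Second, the closure of the paper's class (multigraphs obtainable from a forest by adding parallel edges and subdividing edges) under attaching a pendant edge is not ``visibly'' true, and it is exactly what your degree-at-most-one case relies on. The obvious induction on a construction sequence of $G-v$ breaks precisely when the attachment vertex $u$ was created by a subdivision: $u$ does not exist in the smaller graph, and the naive re-rooting (replacing the subdivided edge by a path in the initial forest) can be blocked, because parallel copies of that very edge may have been added at earlier stages and then there is no edge left to duplicate. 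The closure is true --- for instance a triangle with a pendant at one vertex is constructible, but only from an initial forest chosen to contain that vertex --- yet under this bare generative definition it needs its own lemma (e.g.\ that for every series-parallel multigraph and every prescribed vertex there is a construction whose initial forest contains that vertex, or an argument via two-terminal networks), or else the induction must be reorganized so that, as in your degree-two case, only the two defining operations are ever needed to rebuild $G$. Until these two points are filled in, the ``if'' direction is not established.
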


A \emph{branch-vertex} in a graph $G$ is a vertex of degree at
least~3.  A \emph{branch} is a path of $G$ of length at least one
whose ends are branch-vertices and whose internal vertices are not (so
they all have degree~2).  Note that a branch of $G$ whose ends are $u,
v$ has at most one chord: $uv$.  An induced subdivision $H$ of $K_4$
has four vertices of degree three, which we call the \emph{corners}
of $H$, and six branches, one for each pair of
corners.

A \emph{theta} is a connected graph with exactly two vertices of
degree three, all the other vertices of degree two, and three
branches, each of length at least two.  A \emph{prism} is a graph that
is the line graph of a theta.

\begin{lemma}
  \label{l:begin}
  Let $G$ be an ISK4-free graph.  Then either $G$ is a series-parallel
  graph, or $G$ contains a prism, a wheel or a $K_{3, 3}$.
\end{lemma}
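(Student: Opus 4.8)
The goal is a dichotomy: an ISK4-free graph is either series-parallel or contains one of a prism, a wheel, or $K_{3,3}$. The natural starting point is Theorem~\ref{th:dirac}: if $G$ is not series-parallel, then $G$ contains a subdivision of $K_4$ as a (not necessarily induced) subgraph. So the plan is to take such a subgraph $H$ that is, in some precise sense, minimal, and then analyze how chords can appear on $H$. Since $G$ is ISK4-free, $H$ cannot be induced, so $H$ has at least one chord; I will show that the presence of a chord, together with minimality of $H$, forces a prism, a wheel, or a $K_{3,3}$ already inside $G$.

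Here is how I would set up the minimality. Among all subgraphs of $G$ that are subdivisions of $K_4$, choose one, call it $H$, with the minimum number of vertices; let its corners be $a_1,a_2,a_3,a_4$ and let $B_{ij}$ denote the branch of $H$ joining $a_i$ and $a_j$. By minimality, each branch $B_{ij}$ is a chordless path in $G$ (a chord would either create a shorter $K_4$-subdivision or a smaller theta/prism configuration — this needs to be checked but is routine), so any chord of $H$ must join two vertices lying on \emph{different} branches. The key case analysis is on the endpoints of such a chord $uv$ with $u\in B_{ij}$, $v\in B_{k\ell}$. Two subcases arise: the two branches share a corner, or they are ``opposite'' (disjoint, e.g.\ $B_{12}$ and $B_{34}$).

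The heart of the argument is this case analysis. If $uv$ is a chord between two branches sharing a corner, say $u\in B_{12}\sm\{a_2\}$ and $v\in B_{13}\sm\{a_3\}$, then one can reroute: the cycle through $a_1$ along $B_{12}$ to $u$, across $uv$, and back along $B_{13}$ to $a_1$, is shorter, and combined with the rest of $H$ one typically finds either a smaller $K_4$-subdivision (contradicting minimality) or a wheel centered appropriately — the precise outcome depends on whether $u=a_1$ or $v=a_1$ and on further chords. If instead $uv$ joins opposite branches $B_{12}$ and $B_{34}$, then $H+uv$ restricted to the four branches $B_{13},B_{14},B_{23},B_{24}$ together with the sub-paths of $B_{12},B_{34}$ cut at $u,v$ forms a subdivision of $K_{3,3}$ or of a prism (the ``theta-like'' structures), and one extracts an induced such subgraph, or else yet more chords are present and one iterates. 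The main obstacle — and the step that will require the most care — is organizing this chord analysis so that it terminates: each time a chord fails to immediately yield a prism/wheel/$K_{3,3}$, one must either contradict the minimality of $H$ or pass to a strictly smaller configuration, and one must make sure the bookkeeping (which vertices are corners, which chords remain) is consistent. I would handle this by always re-invoking minimality against the \emph{total vertex count} of the $K_4$-subdivision, so that any rerouting that shortens a branch closes the case immediately, leaving only the ``rigid'' configurations, which are exactly the prism, the wheel, and $K_{3,3}$ (and their induced sub-configurations, obtained by taking a minimal chorded version).
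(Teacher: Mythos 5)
Your opening moves coincide with the paper's proof: invoke Theorem~\ref{th:dirac}, take a vertex-minimal subdivision $H$ of $K_4$ in $G$, observe that $G$ being ISK4-free forces a chord, note that minimality makes each branch chordless, and split according to where the chord sits. But the heart of the argument is missing, and the outcomes you assign to the cases are wrong, which suggests the case analysis has not actually been carried out. Concretely: (i) a chord $uv$ between interior vertices of two branches meeting at a corner $a$ does not lead to ``a smaller subdivision or a wheel''; what minimality actually forces is that $a$ is adjacent to both $u$ and $v$ (otherwise deleting the interior of $a$--$\cdots$--$u$ yields a smaller subdivision with corners $b,c,d,v$) and that the three branches avoiding $a$ have length one, so the outcome here is a \emph{prism} with triangles $auv$ and $bcd$ --- an outcome your plan never produces in this case. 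The wheel arises only from a chord incident to a corner $a$, where minimality forces the three branches at $a$ to have length one and a further argument shows every chord of $H$ is incident to $a$. (ii) A chord between two opposite branches makes the resulting branch structure a subdivision of $K_{3,3}$ (not ``of $K_{3,3}$ or of a prism''), and, more importantly, a $K_{3,3}$-subdivision as a subgraph is useless for the lemma: the conclusion requires an \emph{induced} $K_{3,3}$, and a long chordless subdivision of $K_{3,3}$ contains no induced $K_{3,3}$ at all. What saves the day is again minimality: it forces the four remaining branches to have length exactly one and the chord's ends to be adjacent to the corners, so the six vertices literally induce $K_{3,3}$ once the earlier cases have excluded every other chord. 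This step --- minimality collapsing branches to single edges and pinning the chord ends to the corners, plus checking that no further chords can exist --- is the engine of the proof, and it is precisely what your plan replaces with ``rigid configurations'' and ``one iterates''.

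The termination worry you raise is also a symptom of the missing structure: no iteration is needed. With one fixed minimal $H$ (more precisely, the graph induced on its vertex set) and the cases treated in order --- first chords incident to a corner, then chords between two branches sharing a corner, then chords between opposite branches --- each case either contradicts minimality outright or determines $H$ completely as an induced wheel, an induced prism, or an induced $K_{3,3}$. Without spelling out these forcing arguments (and the induced-ness checks, e.g.\ that the outer cycle in the wheel case is a hole), the proposal does not yet constitute a proof.
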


\begin{proof}
  Suppose that $G$ is not series-parallel.  By Theorem~\ref{th:dirac},
  $G$ contains a subdivision $H$ of $K_4$ as a possibly non-induced
  subgraph.  Let us choose a minimal such subgraph $H$.  So $H$ can be
  obtained from a subdivision $H'$ of $K_4$ by adding edges (called
  \emph{chords}) between the vertices of $H'$.  Since $G$ is
  ISK4-free, there is at least one such chord $e$ in $H$.  Let $H'$
  have corners $a, b, c, d$ and branches $P_{ab}$, $P_{ac}$, $P_{ad}$,
  $P_{bc}$, $P_{bd}$, $P_{cd}$ with the obvious notation.  Note that,
  by the minimality of $H$, the six paths $P_{ab}$, $P_{ac}$,
  $P_{ad}$, $P_{bc}$, $P_{bd}$, $P_{cd}$ are chordless in $H$.

  Suppose that $e$ is incident to one of $a, b, c, d$, say $e= ax$.
  Then $x$ lies in none of $P_{ab}$, $P_{ac}$, $P_{ad}$ by the
  minimality of $H$.  Moreover $P_{ab}$, $P_{ac}$, $P_{ad}$ have all
  length one, for otherwise, by deleting the interior vertices of one
  of them, we obtain a subdivision of $K_4$, which contradicts the
  minimality of $H$.  If $H$ has a chord $e'$ that is not incident to
  $a$, then $e'$ is a chord of the cycle $C = P_{bd} \cup P_{cd} \cup
  P_{bc}$.  Since $C$ is a cycle with one chord $e'$ and since the
  branches $P_{bd}, P_{cd}, P_{bc}$ are chordless, we may assume up to
  symmetry that $C$ contains a cycle $C'$ that goes through $e', c, d$
  and not through $b$.  If $x$ is in $C'$, then $C'\cup \{a\}$ is a
  subdivision of $K_4$, which contradicts the minimality of $H$.  So,
  up to the symmetry between $P_{bc}$ and $P_{bd}$, we may assume that
  $x$ is in $P_{bd}\sm C'$.  Then $C' \cup x\tp P_{bd} \tp d\cup
  \{a\}$ forms a subdivision of $K_4$, which contradicts the the
  minimality of $H$.  Hence, every chord of $H$ is incident to $a$.
  This means that $H$ is a wheel with hub $a$ and the lemma holds.
  From now on, we assume that no chord of $H$ is adjacent to $a, b, c,
  d$.

  Suppose that $e$ is between interior vertices of two branches of $H$
  with a common end, $P_{ab}$ and $P_{ad}$ say.  Put $e=uv$, where
  $u\in P_{ab}$, $v\in P_{ad}$.  Vertices $a$ and $u$ are adjacent,
  for otherwise the deletion of the interior vertices of $a \tp P_{ab}
  \tp u$ produces a subdivision of $K_4$, which contradicts the
  minimality of $H$.  Similarly, $a$ and $v$ are adjacent, and
  $P_{bc}$, $P_{bd}$, $P_{cd}$ all have length one.  So $H'$ is a
  prism, whose triangles are $auv, bcd$.  If $H=H'$, the lemma holds,
  so let us assume that $H'\neq H$.  Then $H$ has a chord $e'$ that is
  not an edge of $H'$.  Up to symmetry, we assume that $e'$ has an end
  $u'$ in $u P_{ab} b$ and an end $v'$ in $v P_{ad} d$.  Note that
  $u'\neq b$ and $v'\neq d$.  Since $e\neq e'$ we may assume $u \neq
  u'$.  Then the deletion of the interior vertices of $aP_{ab}u'$
  gives a subdivision of $K_4$, which contradicts the minimality of
  $H$.   

  Finally, suppose that $e$ is between two branches of $H$ with no
  common end, $P_{ad}$ and $P_{bc}$ say.  Put $e=uv$, $u\in P_{ad}$,
  $v\in P_{bc}$.  If $P_{ab}$ has length greater than one , then by
  deleting its interior we obtain a subdivision of $K_4$, which
  contradicts the minimality of $H$.  So, $P_{ab}$, and similarly
  $P_{ac}$, $P_{bd}$, $P_{cd}$, all have length one.  The same
  argument shows that $ua$, $ud$, $vb$, $vc$ are edges of $H$.  Hence
  $H$ is isomorphic to $K_{3, 3}$.
\end{proof}


\section{Complete bipartite graphs}
\label{sec:K33}

Here we decompose ISK4-free graphs that contain a $K_{3, 3}$.

\begin{lemma}
  \label{l.k33n}
  Let $G$ be an ISK4-free graph, and $H$ be a maximal induced $K_{p,
  q}$ in $G$, such that $p, q \geq 3$.  Let $v$ be a vertex of $G\sm
  H$.  Then the attachment of $v$ over $H$ is either empty, or
  consists of one vertex or of one edge or is $V(H)$.
\end{lemma}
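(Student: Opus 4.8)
The plan is to analyze a vertex $v$ outside a maximal induced $K_{p,q}$ and show that if its attachment is neither empty, a single vertex, a single edge, nor all of $V(H)$, then we produce an ISK4, contradicting the hypothesis. Write the two sides of the bipartition as $A$ and $B$ with $|A|=p$, $|B|=q$, and let $N_A = N(v)\cap A$ and $N_B = N(v)\cap B$. The first step is to observe that by maximality of $H$, vertex $v$ cannot be complete to both $A$ and $B$ (otherwise $H+v$ would be a larger $K_{p,q+1}$ or $K_{p+1,q}$, or rather we'd need $v$ complete to one side and then it joins the other side — I should be slightly careful: if $v$ is complete to $A$ then $B\cup\{v\}$ versus $A$ gives $K_{p,q+1}$, contradicting maximality; so $v$ is complete to neither side). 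Hence at least one of $A\setminus N_A$, $B\setminus N_B$ is nonempty.

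The core of the argument is a case analysis on the sizes of $N_A$ and $N_B$. The key configurations to exploit: whenever $v$ has two nonneighbors $a_1,a_2\in A$ and two neighbors $b_1,b_2\in B$, then $\{a_1,a_2,b_1,b_2\}$ together with $v$ behaves like a potential corner-set — more usefully, pick $a\in N_A$, $a'\in A\setminus N_A$, $b\in N_B$, $b'\in B\setminus N_B$ (when these exist); then $a,a',b,b',v$ induce a specific small graph. I expect the main workhorse to be: if $|N_A|\ge 2$ and $|N_B|\ge 1$ (or symmetric), and there is a nonneighbor on one side, build an ISK4 whose corners are chosen among $v$ and three vertices of $H$, using the fact that $p,q\ge 3$ guarantees enough vertices on each side to route the six branches (most branches being single edges of $K_{p,q}$, plus short paths through $v$). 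Concretely: suppose $v$ has neighbors $b_1,b_2\in B$ and $a_1\in A$, and a nonneighbor $a_2\in A$ and (using $q\ge3$) a third vertex $b_3\in B$. Then among $v,a_1,a_2,b_1$ one checks the required six induced paths exist — $v\!-\!b_1$, $v\!-\!a_1$, $a_1\!-\!b_1$, then $a_1\!-\!b_2\!-\!a_2$, $v\!-\!b_2\cdots$ — and after some bookkeeping this yields an induced $K_4$-subdivision.

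The cases to organize are: (i) $|N_A|\ge 2$ and $|N_B|\ge 2$; (ii) $|N_A|\ge 2$ and $|N_B|=1$ (and symmetric); (iii) $|N_A|=1$ and $|N_B|=1$ with the attachment not being a single edge, i.e. $v$'s unique neighbor $a\in A$ and unique neighbor $b\in B$ satisfy — wait, $\{a,b\}$ is always an edge of $K_{p,q}$, so $|N_A|=|N_B|=1$ gives an edge-attachment and is fine; (iv) $|N_A|\ge 1$, $|N_B|=0$: then the attachment lies inside $A$, and if $|N_A|\ge 2$ we must derive a contradiction. So the genuinely bad cases reduce to: $|N_A|\ge 2$ with $|N_B|=0$; and $|N_A|\ge 1,|N_B|\ge 1$ with $|N_A|+|N_B|\ge 3$. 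In the first, take $a_1,a_2\in N_A$, and $b_1,b_2,b_3\in B$ (using $q\ge 3$), and $a_3\in A$ with $a_3\ne a_1,a_2$ (using $p\ge3$): then $v\!-\!a_1$, $v\!-\!a_2$, $a_1\!-\!b_1\!-\!a_2$ with $v$ nonadjacent to $b_1$, together with $b_1,b_2,b_3$ and $a_1,a_2,a_3$, a subdivision of $K_4$ appears with corners say $v$, $b_1$ (no — $b_1$ has degree issues); more cleanly the corners are $a_1,a_2,b_1,b_2$ with $v$ subdividing the $a_1a_2$ branch and the other five branches being edges of $K_{p,q}$, and this is induced precisely because $v$ misses $b_1$ and $b_2$. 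That handles (iv). The remaining mixed case is similar with $v$ adjacent to one extra vertex, and one routes the branch through $v$ on the side where $v$ has its neighbors while keeping the opposite-side endpoints as nonneighbors of $v$.

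The main obstacle I anticipate is the bookkeeping in the mixed case where $v$ has neighbors on both sides and at least three neighbors total: one must choose the four corners and the subdividing vertex $v$ so that the resulting subdivision is genuinely \emph{induced}, which forces the two endpoints of the branch containing $v$ to be chosen among nonneighbors of $v$ whenever possible — and when $v$ is complete to one side but not the other this requires a slightly different routing, handled using maximality of $H$ to guarantee a nonneighbor on the non-complete side. Assembling these subcases cleanly, and verifying in each that no interior vertex of any branch has an extra neighbor among the corners (which for branches that are single edges of $K_{p,q}$ is automatic, and for the one branch through $v$ follows by the choice of endpoints), is the crux; the sizes $p,q\ge 3$ are exactly what is needed to always have the spare vertices $a_3,b_3$ available to complete the four corners.
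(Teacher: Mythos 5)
There is a genuine gap, and it sits in your very first step. The maximality of $H$ as an \emph{induced} $K_{p,q}$ only forbids $v$ from being complete to one side and anticomplete to the other (then $v$ could be added to the opposite side). It does \emph{not} forbid $v$ from being complete to both $A$ and $B$: in that case $B\cup\{v\}$ is not a stable set, so it does not yield a larger induced $K_{p,q+1}$, and no contradiction arises. Indeed ``attachment $=V(H)$'' is one of the allowed outcomes of the lemma and it really occurs in ISK4-free graphs (e.g.\ a thick complete tripartite graph such as $K_{3,3,1}$ is ISK4-free, and the vertex of the third part is complete to $H=K_{3,3}$). Because you conclude ``$v$ is complete to neither side,'' your list of ``genuinely bad cases'' ($|N_A|\ge 1$, $|N_B|\ge 1$, $|N_A|+|N_B|\ge 3$) wrongly includes the configuration $N_A=A$, $N_B=B$, in which no ISK4 exists; the plan of deriving an ISK4 in every such case cannot be carried out. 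The correct handling of the mixed case is: if $v$ has two neighbours $a_1,a_2\in A$ and is neither complete nor anticomplete to $B$, then $\{a_1,a_2,b_1,b_2,v\}$ (with $vb_1\in E$, $vb_2\notin E$) is an ISK4; if $v$ is anticomplete to $B$, then a non-neighbour $a_3\in A$ would give the six-vertex ISK4 $\{a_1,a_2,a_3,b_1,b_2,v\}$, so $v$ is complete to $A$ and \emph{only then} maximality is contradicted; and if $v$ is complete to $B$, the symmetric argument forces $v$ complete to $A$, which is the legitimate outcome $V(H)$ rather than a contradiction.

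A secondary inaccuracy: in your case $|N_A|\ge 2$, $N_B=\emptyset$ you take corners $a_1,a_2,b_1,b_2$ and claim ``the other five branches are edges of $K_{p,q}$,'' but $b_1b_2$ is not an edge; that branch must be routed through a third vertex $a_3\in A$, and the subdivision is induced only if $a_3$ is a non-neighbour of $v$. The existence of such an $a_3$ does follow in this subcase (since here $v$ is anticomplete to $B$, completeness to $A$ would contradict maximality), but you need to say this locally rather than rely on the incorrect global claim that $v$ is complete to neither side.
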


\begin{proof}
Let $A=\{a_1, \dots, a_p\}$ and $B = \{b_1, \dots, b_q\}$ be the two
sides of the bipartition of $H$.  If $v$ is adjacent to at most one
vertex in $A$ and at most one in $B$, then the lemma holds.  Suppose
now, up to symmetry, that $v$ is adjacent to at least two vertices in
$A$, say $a_1, a_2$.  Then $v$ is either adjacent to every vertex in
$B$ or to no vertex in $B$, for otherwise, up to symmetry, $v$ is
adjacent to $b_1$ and not to $b_2$, and $\{a_1, a_2, b_1, b_2, v\}$ is
an ISK4.  If $v$ has no neighbor in $B$, then $v$ sees every vertex in
$A$, for otherwise $v a_3 \notin E(G)$ say, and $\{a_1, a_2, a_3, b_1,
b_2, v\}$ is an ISK4.  So, $v$ is complete to $A$ and anticomplete to
$B$, which contradicts the maximality of $H$.  If $v$ is complete to
$B$, then $v$ is adjacent to at least two vertices in $B$ and
symmetrically we can prove that $v$ is complete to $A$.  So, the
attachment of $v$ is $V(H)$.
 \end{proof}

\begin{lemma}
  \label{l.k33C}
  Let $G$ be an ISK4-free graph that contains a $K_{3,3}$, and let $H$
  be a maximal induced $K_{p, q}$ of $G$ with $p, q \geq 3$.  Let $U$
  be the set of those vertices of $V(G)\setminus H$ that are complete
  to $H$.  Let $C$ be a component of $G\setminus(H\cup U)$.  Then the
  attachment of $C$ over $H$ is either empty or consists of one vertex
  or of one edge.
\end{lemma}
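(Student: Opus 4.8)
The plan is to argue by contradiction: suppose the attachment $N_H(C)$ is neither empty, nor a single vertex, nor a single edge. Then either $N_H(C)$ meets one side of the bipartition $(A,B)$ of $H$ in at least two non-adjacent vertices together with a third vertex of $H$, or it is ``large'' in some other way; in any case we shall extract an induced subdivision of $K_4$ using $C$ together with a suitable piece of $H$.

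First I would set up notation: write $A=\{a_1,a_2,a_3\}$ and $B=\{b_1,b_2,b_3\}$ for the two sides of the $K_{3,3}$ inside $H$ (taking $p=q=3$; the maximal $H$ may be larger, but it always contains such a $K_{3,3}$, and restricting attention to it is harmless because $N_H(C)$ restricted to these six vertices already gives the contradiction, while any vertex of $H$ outside them behaves like one of $a_1,a_2,a_3$ or $b_1,b_2,b_3$ with respect to $C$). By Lemma \ref{l.k33n}, each single vertex $v\in C$ has attachment over $H$ that is empty, a vertex, an edge, or all of $V(H)$; but vertices complete to $H$ lie in $U$, hence for every $v\in C$ the attachment $N_H(v)$ is empty, a single vertex, or a single edge. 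Now since $C$ is connected, $N_H(C)$ is the union of these ``small'' attachments along $C$, so if $N_H(C)$ is not itself a vertex or an edge, then walking along a shortest path in $C$ there are two vertices $u,u'\in C$, adjacent or joined by an induced path $P$ inside $C$ with no interior attachment to $H$, whose attachments $N_H(u),N_H(u')$ together span at least three vertices of $H$ that do not all lie in a common edge.

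The core case analysis then splits on the shapes of $N_H(u)$ and $N_H(u')$. For instance, if $N_H(u)=\{a_1,b_1\}$ and $N_H(u')=\{a_2\}$, then the path $a_1 \tp u \tp P \tp u' \tp a_2$ is an induced path $Q$ from $a_1$ to $a_2$ with no other vertex of $H$ on it, while $b_1$ is adjacent to both $a_1$ and $a_2$ and to $u$; together with the vertex $b_2$ (adjacent to $a_1,a_2$ and to no vertex of the path since its attachment in $C$ is small and $u,u'$ are already pinned) and $b_3$ we can build a subdivision of $K_4$ on corners $\{a_1,a_2,b_2,b_3\}$ — or more cleanly, $\{a_1,a_2\}\cup\{b_i,b_j\}$ with the two branches through $C$ and through the $K_{3,3}$ — and check it is induced because the only chords could come from $C$'s attachment, which we have bounded. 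The remaining configurations ($N_H(u)$ and $N_H(u')$ being two edges sharing a vertex, two disjoint edges, an edge and a disjoint vertex, two disjoint vertices, etc.) are handled by the same template: pick two corners in one side, two in the other, route one pair of opposite branches through the relevant part of $C$ and the rest through $H$, and use the bound on attachments to rule out chords.

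I expect the main obstacle to be the bookkeeping that makes the extracted subdivision \emph{induced}: one must be sure that no vertex of the path $P\subseteq C$ sends an edge to a vertex of $H$ used as a corner or as an internal branch vertex, and that no two corners chosen in $H$ are non-adjacent when they need to be adjacent (which is automatic here since $H$ is complete bipartite) or vice versa. The clean way to organize this is to first prove the statement for the case where $C$ is a single vertex (immediate from Lemma \ref{l.k33n} plus the definition of $U$), then for $C$ an edge, and then reduce the general connected case to a shortest ``bad'' path as above, so that at the moment of building the $K_4$-subdivision all attachments in play are individually vertices or edges and the inducedness check is a short finite verification.
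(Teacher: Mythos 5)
Your overall strategy (argue by contradiction, use Lemma~\ref{l.k33n} to bound each vertex's attachment, take a shortest path inside $C$ between two attachment vertices, and build an ISK4 out of that path plus vertices from both sides of $H$) is the same as the paper's. But there is a genuine gap in the key reduction step: you assume you can find two vertices $u,u'\in C$ whose combined attachment is ``bad'' and which are joined by an induced path $P$ in $C$ \emph{with no interior attachment to $H$}. Minimality of such a path does not give you that, and in general no such path exists. Concretely, let $C$ be a path $v_1\tp v_2\tp\cdots\tp v_k$ with $v_1$ adjacent to $a_1$, $v_k$ adjacent to $a_2$, and every interior $v_i$ adjacent to $b_1$: each single vertex and each adjacent pair has attachment contained in an edge, so no ``local'' bad pair with a clean connecting path exists, yet the attachment of $C$ over $H$ is not an edge. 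Your template of routing a branch of the $K_4$-subdivision through $P$ and checking inducedness ``because attachments are bounded'' breaks down exactly here, since interior vertices of $P$ may be adjacent to vertices of $B$ that you want to use as corners or branch vertices.

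The paper's proof deals with precisely this: choosing $c_1,c_2$ with two neighbours in one side $D$ (say $A$) and $P$ minimal only guarantees that no vertex of $P$ has a neighbour in $A\setminus\{a_1,a_2\}$ (in particular $a_3$ has no neighbour on $P$); vertices of $B$ may still attach to $P$. The argument then splits on how many vertices of $B$ have neighbours on $P$ (none, exactly one, or at least two), and in each case exhibits an explicit ISK4 --- e.g.\ when exactly $b_1$ attaches to $P$, the ISK4 is $V(P)\cup\{a_1,a_2,a_3,b_2,b_3\}$, which simply avoids $b_1$, using the hypothesis $q\ge 3$. Your proposal is missing this case analysis (and the use of the third vertex on each side to dodge unwanted attachments), so as written it does not go through; repairing it would essentially reproduce the paper's argument.
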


\begin{proof}
  Suppose the contrary.  So we may assume up to symmetry that there
  are vertices $c_1, c_2$ in $C$ such that $|N(\{c_1, c_2\}) \cap D|
  \geq 2$ where $D$ is one of $A, B$.  Since $C$ is connected, there
  is a path $P = c_1 \tp \cdots \tp c_2$ in $C$ from $c_1$ to $c_2$.
  We choose $c_1, c_2$ such that $P$ is minimal.  Up to symmetry, we
  may assume that $c_1a_1, c_2a_2 \in E(G)$.  By Lemma~\ref{l.k33n},
  we have $c_1\neq c_2$.  If $a_3$ has a neighbor in $P$, then by
  Lemma~\ref{l.k33n} this neighbor must be an interior vertex of $P$,
  but this contradicts the minimality of $P$.  So, $a_3$ has no
  neighbor in $P$.  If no vertex in $B$ has neighbors in $P$, then
  $V(P) \cup \{a_1, a_2, a_3, b_1, b_2\}$ induces an ISK4.  If exactly
  one vertex in $B$, say $b_1$, has neighbors in $P$, then $V(P)\cup
  \{a_1, a_2, a_3, b_2, b_3\}$ induces an ISK4.  If at least two
  vertices in $B$, say $b_1, b_2$, have neighbors in $P$, then by
  Lemma~\ref{l.k33n} and by the minimality of $P$ we may assume that
  $N(b_1) \cap V(P) = \{c_1\}$ and $N(b_2) \cap V(P) = \{c_2\}$.  But
  then $V(P)\cup \{a_1, a_3, b_1, b_2\}$ induces an ISK4.  In every
  case there is a contradiction.
\end{proof}

Let us say that a complete bipartite or complete tripartite graph is
\emph{thick} if it contains an induced $K_{3,3}$.

\begin{lemma}
  \label{l:decK33}
  Let $G$ be an ISK4-free graph that contains $K_{3, 3}$.  Then either
  $G$ is a thick complete bipartite or complete tripartite graph, or
  $G$ has a clique-cutset of size at most three.
\end{lemma}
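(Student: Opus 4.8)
The plan is to use the structural information already obtained: start from a maximal induced $K_{p,q}$ (with $p,q\ge 3$) inside $G$, which exists because $G$ contains $K_{3,3}$, and let $U$ be the set of vertices complete to $H$, as in Lemma~\ref{l.k33C}. First I would analyze the structure of $U$ together with $H$. By Lemma~\ref{l.k33n} and the maximality of $H$, any $u\in U$ is complete to $A\cup B$; two such vertices $u,u'$ cannot be adjacent, for then $\{u,u',a_1,b_1\}$ would already give an induced $K_4$ (a trivial ISK4), and if $u\not\sim u'$ then $\{a_1,a_2,u,u',b_1\}$-type configurations must be checked: in fact $U$ is a stable set, and $H\cup U$ is a complete tripartite graph $K_{p,q,|U|}$ provided $U\ne\emptyset$, or just $H$ itself if $U=\emptyset$. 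One must also observe that no vertex of $U$ can have a neighbor in a component $C$ of $G\setminus(H\cup U)$ in a way that creates trouble — actually $U$ vertices may have arbitrary neighborhoods in $G\setminus(H\cup U)$, so the key point is simply the attachment of the components $C$.

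Next I would invoke Lemma~\ref{l.k33C}: every component $C$ of $G\setminus(H\cup U)$ attaches to $H$ on at most an edge (empty, one vertex, or one edge). The idea is then to show that $H\cup U$ is "almost" the whole graph, or else one finds a small clique-cutset. Concretely, suppose $G\setminus(H\cup U)$ is nonempty and let $C$ be a component. Let $K=N(C)\cap(H\cup U)$. I claim $K$ is a clique of size at most $3$ and that it separates $C$ from the rest of $H\cup U$, hence is a clique-cutset — unless $C$ is the only component and $V(G)=V(C)\cup(H\cup U)$ with $H\cup U$ "absorbed". The attachment of $C$ on $H$ itself is at most an edge $\{a_i,b_j\}$ (or a single vertex, or empty); and the attachment on $U$ could be several vertices of the stable set $U$. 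So a priori $K$ could be large. Here is where the main work lies: I would show that if $C$ attaches to two non-adjacent vertices $u,u'$ of $U$, or to a vertex of $U$ and a non-neighbor of it in $H$, one rebuilds an ISK4 using a path through $C$ together with enough of the tripartite structure — entirely analogous to the proof of Lemma~\ref{l.k33C}, since in $K_{p,q,|U|}$ the vertex $u$ plays the same role as an $a_i$. The conclusion of that analysis: $N(C)\cap(H\cup U)$ is contained in a triangle of the complete tripartite graph $H\cup U$, i.e.\ a set $\{x,y,z\}$ with $x\in A$, $y\in B$, $z\in U$ (with some possibly absent), hence a clique of size at most $3$.

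Having established that, the endgame splits into two cases. If $G=H\cup U$, then $G$ is a thick complete bipartite graph (if $U=\emptyset$) or a thick complete tripartite graph (if $U\ne\emptyset$), and since it contains $K_{3,3}$ it is thick — the first alternative holds. Otherwise $G\setminus(H\cup U)$ is nonempty; pick any component $C$, and let $K=N(C)\cap(H\cup U)$, a clique of size at most $3$. Since $|A|,|B|\ge 3$ and $|K|\le 3$ with at most one vertex of $K$ in each of $A$, $B$, $U$, the set $(H\cup U)\setminus K$ is nonempty and is disconnected from $C$ in $G\setminus K$; so $K$ is a clique-cutset of size at most three, giving the second alternative. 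The main obstacle, as flagged, is the ISK4-rebuilding argument bounding $N(C)$ inside $H\cup U$ to a triangle; once the tripartite picture is set up so that $U$-vertices behave like a third color class, this is a routine extension of Lemmas~\ref{l.k33n} and \ref{l.k33C}, but it needs care because $U$ is not a single vertex and one must handle several attachment vertices in $U$ simultaneously.
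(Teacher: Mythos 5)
Your proposal follows essentially the same route as the paper: a maximal induced $K_{p,q}$ with $p,q\ge 3$, the stable set $U$ of vertices complete to it, Lemma~\ref{l.k33C} applied to the components of $G\setminus(H\cup U)$, and then either $G=H\cup U$ (thick complete bipartite or tripartite) or a clique-cutset of size at most three. The step you defer as the main obstacle --- bounding the attachment of a component $C$ inside $U$ --- is exactly the paper's only additional work, and it is the short argument you anticipate: if $u,u'\in N(C)\cap U$, take a minimal path $P$ in $C$ between a neighbour of $u$ and a neighbour of $u'$; by Lemma~\ref{l.k33C} some $a_3\in A$ and $b_3\in B$ have no neighbour in $C$, and then $P\cup\{u,u',a_3,b_3\}$ is an ISK4, so $|N(C)\cap U|\le 1$ (note the other case you worry about, a vertex of $U$ with a non-neighbour in $H$, is vacuous since $U$ is complete to $H$).
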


\begin{proof}
  Let $H$ be a maximal $K_{p, q}$ in $G$, with $p, q \geq 3$, and let
  $U$ be the set of those vertices that are complete to $H$.  Note
  that $U$ is a stable set because if $U$ contains an edge $uv$, then
  $\{u, v, a_1, b_1\}$ is an ISK4.  If $V(G) = V(H)\cup U$, then $G$
  is either a complete bipartite graph (if $U = \emptyset$) or
  complete tripartite graph (if $U\neq \emptyset$).  Now suppose that
  $V(G) \neq V(H)\cup U$, and let $C$ be any component of
  $G\setminus(H\cup U)$.  We claim that $|N(C)\cap U|\leq 1$.  Else,
  consider two vertices $u, v$ in $N(C) \cap U$ and a minimal path $P$
  in $C$ from a neighbor of $u$ to a neighbor of $v$.  By
  Lemma~\ref{l.k33C}, we may assume that $a_3$ and $b_3$ have no
  neighbor in $C$ (hence in $P$).  Then $P\cup \{u, v, a_3, b_3\}$ is
  an ISK4, a contradiction.  This proves our claim.  By
  Lemma~\ref{l.k33C}, $N(C)\cap(V(H)\cup U)$ is a clique-cutset of $G$
  of size at most three.
\end{proof}

%
%
%
%

\subsubsection*{Proof of Theorem~\ref{th:ScottK4}}

Let $c=d(K_4,6)\geq 3$ the constant of Theorem~\ref{th:KO} with
$H=K_4$ and $s=6$.  We claim that any ISK4-free graphs is
$c$-colorable.  Suppose on the contrary that there exists an ISK4-free
graph $G$ with $\chi(G)> c$, and suppose $G$ is minimal with this
property, i.e. $\chi(H)\leq c$ for every proper induced subgraph $H$
of $G$.

We claim that the degree of every vertex is at least $c$.  Suppose on
the contrary that $G$ contains a vertex $v$ of degree $deg(v)\leq
c-1$, then $\chi(G)\leq max(\chi(G-v),deg(v)+1)\leq c$, a
contradiction.  So the average degree of $G$ is at least $c=d(K_4,6)$

By Theorem~\ref{th:KO} the graph $G$ contains a $K_{6,6}$ as a
possibly non-induced subgraph. Let $A,B$ be the two side of the
$K_{6,6}$. The graph $G[A]$ contains no triangle, otherwise this
triangle plus a vertex of $B$ forms a $K_4$. Similarly $G[B]$ contains
no triangle. From the well known fact that any graph on 6 vertices
contains either a triangle or a stable set on 3 vertices, both $G[A]$
and $G[B]$ contains a stable set of size 3. So $G$ contains an induced
$K_{3,3}$.

By Lemma~\ref{l:decK33}, the graph $G$ admits a clique cutset $K$.
Hence $V(G)\setminus K$ is partitioned into non-empty sets $X_1, X_2$
such that there are no edges between $X_1$ and $X_2$.  A coloring of
$G$ can be easily obtained by combining a coloring of $G[K\cup X_1]$
and $G[K\cup X_2]$, showing that $\chi(G)\leq max(\chi(G[K\cup
X_1]),\chi(G[K\cup X_2]))\leq c$.


\section{Cyclically $3$-connected graphs}

A \emph{separation} of a graph $H$ is a pair $(A, B)$ of subsets of
$V(H)$ such that $A \cup B = V(H)$ and there are no edges between $A
\setminus B$ and $B \setminus A$.  It is \emph{proper} if both $A
\setminus B$ and $B\setminus A$ are non-empty.  The \emph{order} of
the separation is $|A\cap B|$.  A \emph{$k$-separation} is a
separation $(A, B)$ such that $|A \cap B| \leq k$.  A separation $(A,
B)$ is \emph{cyclic} if both $H[A]$ and $H[B]$ has cycles.  A graph
$H$ is \emph{cyclically $3$-connected} if it is $2$-connected, not a
cycle, and there is no cyclic $2$-separation.  Note that a cyclic
$2$-separation of any graph is proper.

Here we state simple lemmas about cyclically $3$-connected graphs that
will be needed in the next section.  Most of them are stated and
proved implicitly in~\cite[Section~7]{chudnovsky.seymour:claw4}.  But
they are worth stating separately here: they are needed for the second
time at least and writing down their proof now may be convenient for
another time.  A cyclically $3$-connected graph has at least four
vertices and $K_4$ is the only cyclically $3$-connected graph on four
vertices.  As any $2$-connected graph that is not a cycle, a cyclically
$3$-connected graph is edge-wise partitioned into its branches.

\begin{lemma}
  \label{l:c3ccutset}
  Let $H$ be a cyclically $3$-connected graph.  For every proper
  $2$-separation $(A, B)$ of $H$, $A \cap B$ consists of two
  non-adjacent vertices, one of $H[A], H[B]$ is a path, and thus is
  included in a branch of $H$, and the other contains a cycle.
\end{lemma}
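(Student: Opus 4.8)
The plan is to analyze a proper $2$-separation $(A,B)$ of $H$ directly, using the hypothesis that $H$ is $2$-connected, not a cycle, and has no cyclic $2$-separation. First I would record that $|A\cap B|=2$: since $H$ is $2$-connected it has no $1$-separation, and a proper separation of order $0$ would disconnect $H$, so the order is exactly $2$; write $A\cap B=\{a,b\}$. Next I would argue $ab\notin E(H)$. Suppose $ab\in E(H)$. Then I would move the edge $ab$ to one side, say think of it as belonging to $H[A]$; the point is that the edge $ab$ together with any connectivity inside $B$ cannot by itself force a cycle in $H[B]$ in a way that survives, so I can choose the side containing the edge and deduce that the other side, $H[B\setminus$ something$]$, is still a valid part. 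More carefully: if $ab\in E(H)$, consider the separation and note that because $H$ is not a cycle and is $2$-connected, at least one of the two ``sides'' has to be more than a path; but whichever side is more than a path, together with the chord $ab$, produces a cyclic separation — contradiction. So $ab\notin E(H)$.

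Then I would show one of $H[A],H[B]$ is a path with ends $a,b$. Since $(A,B)$ is a proper separation and $H$ is $2$-connected, in each of $H[A]$ and $H[B]$ both $a$ and $b$ have neighbors, and $H[A]$, $H[B]$ are connected (again by $2$-connectivity of $H$, as $a,b$ separate the graph). If neither $H[A]$ nor $H[B]$ is a path, then in particular neither is a path from $a$ to $b$, and I claim each then contains a cycle. Indeed, a connected graph in which two specified vertices $a,b$ are the only vertices of the separation, every vertex lies on an $(a,b)$-path (this uses $2$-connectivity of $H$ via Menger, giving two internally disjoint $a$–$b$ paths inside each side, or a single path plus an extra vertex branching off), and which is not itself a single $(a,b)$-path must contain a cycle: two internally disjoint $a$–$b$ paths form a cycle. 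So if both sides were non-paths, both contain cycles, contradicting the hypothesis that there is no cyclic $2$-separation. Hence at least one side, say $H[A]$, is a path $P$ from $a$ to $b$.

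Next I would check that this path $P$ is contained in a single branch of $H$. The internal vertices of $P$ all have degree $2$ in $H$: any internal vertex $v$ of $P$ has both its neighbors inside $A$ (since $P$ is induced-ish as a component structure — more precisely all edges from $v$ go into $A$ because $v\notin B$), and $v$ has degree $2$ in the path, so $\deg_H(v)=2$. A maximal path of degree-$2$ vertices extends to a branch of $H$ (recall $H$, being $2$-connected and not a cycle, is edge-partitioned into branches), so $P$ lies inside one branch. Finally, the other side $H[B]$ must contain a cycle: $B\setminus A$ is non-empty, $H[B]$ is connected with $a,b$ non-adjacent, and if $H[B]$ were also a path from $a$ to $b$ then $H$ itself would be the union of two internally disjoint $a$–$b$ paths, i.e.\ a cycle, contradicting that $H$ is not a cycle. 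So $H[B]$ is not a path, and by the argument above it contains a cycle. This finishes the proof.

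The main obstacle I expect is the case $ab\in E(H)$: one has to be careful about which side to assign the edge to and to make sure that the remaining side still genuinely contains a cycle (and that one is not secretly in the situation $H=$ a triangle with a subdivided edge, which is a cycle and excluded, or $H=K_4$, which is cyclically $3$-connected and has no proper $2$-separation anyway). The clean way is to observe that if $ab\in E(H)$ then $(A,B)$ with the edge $ab$ added to whichever side makes that side contain a cycle yields a cyclic $2$-separation unless one side is just the edge $ab$ itself — but then that side has empty interior, contradicting properness. Handling this bookkeeping, rather than any deep idea, is where the care is needed; everything else is a routine application of $2$-connectivity (Menger) and the definition of branch.
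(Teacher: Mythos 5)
Your overall route is the same as the paper's: use $2$-connectivity to get $|A\cap B|=2$, use the absence of a cyclic $2$-separation to force one side to be acyclic and hence an $(a,b)$-path, observe that the internal vertices of that path have all their neighbours in $A$ and thus degree two in $H$ (so the path lies in a branch), and obtain a cycle on the other side since otherwise $H$ would be the union of two internally disjoint $(a,b)$-paths, i.e.\ a cycle. The paper merely orders things differently: it first shows the acyclic side equals an $(a,b)$-path (an $(a,b)$-path exists else $a$ or $b$ is a cutvertex; a tree with a degree-$3$ vertex would yield a cutvertex of $H$) and then reads off $ab\notin E(H)$ from properness; your Menger-style justification that a non-path side must contain a cycle is an acceptable substitute for that tree/cutvertex argument.

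The step that does not work as written is your handling of $ab\in E(H)$. A cyclic $2$-separation requires cycles in \emph{both} $H[A]$ and $H[B]$, so ``whichever side is more than a path, together with the chord $ab$, produces a cyclic separation'' is a non sequitur: a cycle on one side alone contradicts nothing. Moreover there is no choice of ``which side the edge $ab$ belongs to'': a separation here is a pair of vertex sets and $H[A]$, $H[B]$ are induced subgraphs, so if $ab\in E(H)$ this edge lies in both of them automatically. The correct and easy argument, which your final paragraph gropes toward but never states cleanly, is: by properness and $2$-connectivity each of $H[A]$ and $H[B]$ contains an $(a,b)$-path with nonempty interior (a vertex $v\in A\setminus B$ has a path to $a$ avoiding $b$ and a path to $b$ avoiding $a$, and these cannot leave $A$; likewise for $B$), so if $ab\in E(H)$ then each of $H[A]$ and $H[B]$ contains a cycle and $(A,B)$ is a cyclic $2$-separation, a contradiction. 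With that replacement your proof is complete and essentially coincides with the paper's.
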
 

\begin{proof}
  Since $(A, B)$ is proper, $A\cap B$ is a cutset, and so it has size
  two since $H$ is $2$-connected.  We put $A\cap B = \{a, b\}$.  Since
  $(A, B)$ is not cyclic, up to symmetry, $H[A]$ has no cycle.  Note
  that $H[A]$ contains a path $P$ from $a$ to $b$, for otherwise one
  of $a, b$ is a cutvertex of $H$, which contradicts $H$ being
  $2$-connected.  Actually, $H[A] = P$, for otherwise $H[A]$ is a tree
  with at least one vertex $c$ of degree~3, and $c$ is a cutvertex of
  this tree, so $c$ is also a cutvertex of $H$, a contradiction again.
  We have $ab\notin E(H)$ because $(P, B)$ is proper.  Since $(P, B)$
  is a separation, every internal vertex of $P$ has degree two in $H$,
  so $P$ is included in a branch of $H$ as claimed.  So, $ab\notin
  E(H)$ because $(P, B)$ is proper.  If $B$ has no cycle, then by the
  same proof as for $A$, $H[B]$ is a path.  So, $H$ is a cycle, a
  contradiction.
\end{proof}

\begin{lemma}
  \label{l:2cutset}
  Let $H$ be a cyclically $3$-connected graph and $a, b$ be two
  adjacent vertices of $H$.  Then $\{a, b\}$ is not a cutset of $H$.
\end{lemma}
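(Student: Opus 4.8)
The statement to prove is Lemma~\ref{l:2cutset}: in a cyclically $3$-connected graph $H$, no pair of adjacent vertices $a,b$ forms a cutset. The natural approach is to argue by contradiction: suppose $\{a,b\}$ is a cutset with $ab\in E(H)$, and produce a cyclic $2$-separation, contradicting cyclic $3$-connectedness. So let $V(H)\sm\{a,b\}$ split into two nonempty parts with no edges between them; grouping the components gives a separation $(A,B)$ of order $2$ with $A\cap B=\{a,b\}$, both $A\sm B$ and $B\sm A$ nonempty, so $(A,B)$ is proper. Since $H$ is not cyclic $3$-connected would be violated only if $(A,B)$ is cyclic, and it is a $2$-separation, we know it is \emph{not} cyclic, so (up to symmetry) $H[A]$ has no cycle. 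The goal is to derive a contradiction from this alone, using that $ab\in E(H)$.

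First I would observe, exactly as in the proof of Lemma~\ref{l:c3ccutset}, that $H[A]$ must contain an $(a,b)$-path $P$ — otherwise one of $a,b$ is a cutvertex of the $2$-connected graph $H$ — and that in fact $H[A]=P$, since if $H[A]$ were a tree with a vertex of degree at least $3$ that vertex would be a cutvertex of $H$. But now the edge $ab$ is a chord of... more precisely, $ab$ together with $P$ forms a cycle inside $H[A]$ (if $P$ has length at least $2$) or a multi-edge situation (if $P$ has length $1$, i.e. $P$ is just the edge $ab$). If $P$ has length at least $2$, then $H[A]$ contains the cycle $P\cup\{ab\}$, contradicting that $H[A]$ has no cycle. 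So $P$ must be the single edge $ab$, meaning $A=\{a,b\}$ and hence $A\sm B=\emptyset$, contradicting that $(A,B)$ is proper.

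The only subtlety — and the step I would be most careful about — is the reduction from "$\{a,b\}$ is a cutset" to "there is a proper $2$-separation $(A,B)$ with $A\cap B=\{a,b\}$": one must check that grouping the components of $H\sm\{a,b\}$ into two nonempty classes really yields a separation of $H$ in which both sides properly contain $A\cap B$, and that $a,b$ each have a neighbor on both sides (so that $H[A]$ and $H[B]$ are connected enough for the cutvertex argument). Since $H$ is $2$-connected, every vertex of $H\sm\{a,b\}$ has a path to $a$ and to $b$ avoiding the other, so each component of $H\sm\{a,b\}$ sends an edge to both $a$ and $b$; in particular $H[A]$ and $H[B]$ are connected. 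This makes the "$H[A]=P$" argument go through verbatim from Lemma~\ref{l:c3ccutset}. Overall this is a short deduction; there is no real obstacle, only the bookkeeping of setting up the separation correctly and noting that the edge $ab$ is what forces the contradiction (in Lemma~\ref{l:c3ccutset} one instead concludes $ab\notin E(H)$, so here the presence of the edge immediately breaks the acyclicity of $H[A]$ unless $A$ is trivial).
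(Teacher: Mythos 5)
Your proof is correct and takes essentially the same route as the paper, which simply derives the lemma from Lemma~\ref{l:c3ccutset}: the statement of that lemma already asserts that the two vertices of any proper $2$-separation are non-adjacent, so a cutset $\{a,b\}$ with $ab\in E(H)$ is immediately impossible. You re-run the internals of Lemma~\ref{l:c3ccutset}'s proof (acyclic side is a path, hence the edge $ab$ forces a cycle or a trivial side) rather than just citing its conclusion, which is sound but slightly more work than needed.
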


\begin{proof}
  Follows directly from Lemma~\ref{l:c3ccutset}.
\end{proof}

\begin{lemma}
  \label{l:thetask4}
  Let $H$ be a cyclically $3$-connected graph, $a, b$ be two
  branch-vertices of $H$, and $P_1, P_2, P_3$ be three induced paths
  of $H$ whose ends are $a, b$.  Then either:

  \begin{itemize}
  \item 
    $P_1, P_2, P_3$ are branches of $H$ of length at least two and $H
    = P_1 \cup P_2 \cup P_3$ (so $H$ is a theta);
  \item 
    there exist distinct integers $i, j \in \{1, 2, 3\}$ and a path
    $S$ of $H$ with an end in the interior of $P_i$, an end in the
    interior of $P_j$ and whose interior is disjoint from $V(P_1\cup
    P_2 \cup P_3)$; and $P_1\cup P_2 \cup P_3 \cup S$ is a subdivision
    of $K_4$.
  \end{itemize}
\end{lemma}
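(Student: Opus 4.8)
The plan is to analyze how the three paths $P_1,P_2,P_3$ fit together, and then how the rest of $H$ attaches to their union. First I would dispose of degenerate configurations: since $P_1,P_2,P_3$ are induced paths with common ends $a,b$, and $a,b$ are branch-vertices (degree at least three), no two of them can share an internal vertex unless they coincide. If two of them, say $P_1,P_2$, coincide, then together with $P_3$ they would form an induced cycle $C=P_1\cup P_3$, and $a,b$ having degree at least three means each of $a,b$ has a further neighbor; using $2$-connectedness and the fact that $H$ is not a cycle, one finds a further path off $C$ producing a cyclic $2$-separation or directly an ISK4-like structure, contradicting cyclic $3$-connectedness. So I may assume $P_1,P_2,P_3$ are pairwise internally disjoint; their union $\Theta:=P_1\cup P_2\cup P_3$ is then a theta subgraph of $H$ (a subdivision of the theta graph), with branch-vertices $a,b$ in $\Theta$. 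If two of the $P_i$ had length one, then $a,b$ would be joined by a parallel pair; since $P_3$ is induced and of length at least one, this forces $P_3$ also of length one and $H$ would have a triangle on $\{a,b,\cdot\}$ only if a path has length one — I would simply note that at most one $P_i$ can have length one (two length-one $(a,b)$-paths are impossible as $H$ is simple), and a length-one $P_i$ is handled inside the general attachment argument below since it is still a branch.

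Next, I would show $\Theta$ is induced in $H$: any chord of $\Theta$ would have both ends among interior vertices of the $P_i$'s (by inducedness of each $P_i$ and since $a,b$ would only gain a chord $ab$, impossible as that edge lies on no $P_i$ and making it present would violate inducedness of some $P_i$ — more carefully, a chord $ab$ contradicts that the three $P_i$ are induced $(a,b)$-paths only if $ab$ is an edge, in which case some $P_i=ab$ and we are in the length-one case already discussed). A chord between interior vertices of $P_i$ and $P_j$ with $i\ne j$ is precisely a path $S$ of length one as in the second bullet, and together with $\Theta$ it is a subdivision of $K_4$ — so in that case we are done. A chord inside a single $P_i$ contradicts $P_i$ being induced. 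Hence either we have already found the desired $S$, or $\Theta$ is an induced theta subgraph of $H$.

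Now assume $\Theta$ is induced and $H\ne\Theta$; I must produce the path $S$ of the second bullet. Since $H$ is connected there is a component $D$ of $H\setminus V(\Theta)$ with a neighbor in $\Theta$, or a vertex of $V(\Theta)$ with a neighbor outside — in any case, using $2$-connectedness, there is a path $S'$ in $H$ with both ends on $\Theta$, interior disjoint from $V(\Theta)$, and not itself an edge of $\Theta$. Let its ends be $u,v$. If $u,v$ lie on the same branch $P_i$, then $P_i$ together with $S'$ gives a cycle, and replacing the $u$–$v$ subpath of $P_i$ by $S'$ yields a new theta on $a,b$ with branches that are shorter along $P_i$; but more to the point, consider the $2$-separation of $H$ separating the interior of the $u$–$v$ subpath of $P_i$ from the rest — its cutset is $\{u,v\}$, and both sides contain cycles (one side contains the cycle $P_i\cup P_j$ via $a,b$, the other contains the cycle formed by $S'$ and the $u$–$v$ subpath of $P_i$), so $(A,B)$ is a cyclic $2$-separation, contradicting cyclic $3$-connectedness — unless $\{u,v\}=\{a,b\}$ or one of the two sides is trivial, cases I would rule out directly (if $\{u,v\}\subseteq\{a,b\}$ then $S'$ is a fourth $(a,b)$-path and I repeat the theta analysis, eventually forcing an $S$ between two branches). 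Likewise if one of $u,v$ is $a$ or $b$, say $u=a$ and $v$ interior to $P_i$: then $S'$ with the $a$–$v$ subpath of $P_i$ and the $a$–$v$ detour through $P_j$ again produces a cyclic $2$-separation with cutset $\{a,v\}$, contradiction. Therefore $u,v$ lie in the interiors of two distinct branches $P_i,P_j$, and $S:=S'$ is exactly the path required; $P_1\cup P_2\cup P_3\cup S$ is then a subdivision of $K_4$ with corners $a,b,u,v$.

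The main obstacle I anticipate is the careful bookkeeping in the last paragraph: ensuring that in each bad case (ends of $S'$ coinciding, lying on a common branch, or meeting $\{a,b\}$) one genuinely gets a \emph{cyclic} $2$-separation — i.e.\ that \emph{both} sides contain a cycle and the separation is proper — rather than a degenerate one that cyclic $3$-connectedness does not forbid. This is where Lemma~\ref{l:c3ccutset} (and the observation that the relevant cutset has two vertices which would have to be non-adjacent) is the right tool, and where I would need to check that the two ``sides'' each pick up one of the cycles living inside $\Theta$ or newly created by $S'$. Once that case analysis is clean, the conclusion is immediate.
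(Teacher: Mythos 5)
Your reduction to an induced theta and the observation that a chord between the interiors of two distinct $P_i$'s already gives the second outcome are fine, but the heart of your argument --- the case analysis on a single path $S'$ with both ends on $\Theta$ and interior off $\Theta$ --- relies on contradictions that are not there, because the configurations you try to exclude genuinely occur in cyclically $3$-connected graphs. Take $H$ to be a theta with long branches $P_1,P_2,P_3$ from $a$ to $b$, plus one extra vertex $x$ adjacent to two interior vertices $u,v$ of $P_1$ and to one interior vertex $w$ of $P_2$: suppressing degree-two vertices gives a $3$-connected graph on $\{a,b,u,v,w,x\}$, so $H$ is cyclically $3$-connected by Lemma~\ref{l:descc3c}, the $P_i$ are induced, $a,b$ are branch-vertices, and yet $u\tp x\tp v$ is exactly a path with both ends in the interior of the same branch. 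So no contradiction can follow from the mere existence of such a path. Indeed your proposed separation with cutset $\{u,v\}$ is not one: the side meant to contain the cycle $P_i\cup P_j$ and the side meant to contain the cycle formed by $S'$ and the subpath of $P_i$ between $u$ and $v$ would each have to contain the interior of that subpath, which is impossible when $A\cap B=\{u,v\}$; moreover $\{u,v\}$ need not even be a cutset, since other vertices of $H\setminus V(\Theta)$ may attach both to that subpath's interior and elsewhere. The same objection applies to your case $u=a$, and the subcase $\{u,v\}=\{a,b\}$ is only waved at (``eventually forcing an $S$''). This is precisely why the paper argues globally rather than one path at a time: it assumes that \emph{no} path like $S$ exists for \emph{any} choice, deduces that the interiors of $P_1,P_2,P_3$ lie in distinct components of $H\setminus\{a,b\}$ and that every excursion out of $\Theta$ returns to the same branch, and only then builds a cyclic $2$-separation whose sides are unions of whole components of $H\setminus\{a,b\}$ together with $\{a,b\}$, so that it really is a separation and one side really contains the cycle formed by $P_1$ and the returning path. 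Your local contradiction cannot be rescued by ``careful bookkeeping''; the hypothesis must be used globally.

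A second, smaller gap: the first outcome is never actually established. When $H=P_1\cup P_2\cup P_3$ you still must show that all three paths have length at least two --- otherwise $H$ is not a theta and neither outcome holds --- and this needs an argument: the paper notes that if $P_1$ has length one then $(V(P_1\cup P_2),V(P_1\cup P_3))$ is a cyclic $2$-separation, contradicting cyclic $3$-connectivity. Your remark that a length-one $P_i$ is ``handled inside the general attachment argument, since it is still a branch'' does not cover this, because that argument is only run when $H\neq\Theta$. (Also, your opening claim that two induced $(a,b)$-paths sharing an interior vertex must coincide is asserted rather than proved; the paper's proof likewise treats the $P_i$ as internally disjoint, so this is a minor point, but it should be flagged as an assumption rather than derived from inducedness.)
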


\begin{proof}
  Put $H' = P_1 \cup P_2 \cup P_3$.  Suppose that $H = H'$.  If $P_1$
  is of length one, then $(V(P_1 \cup P_2), V(P_1 \cup P_3))$ is a
  cyclic $2$-separation of $H$.  So $P_1$, and similarly $P_2, P_3$
  are of length at least two and the first outcome of the lemma holds.
  Now assume $H \neq H'$.  If the second outcome of the lemma fails,
  then no path like $S$ exists.  In particular there is no edge
  between the interior of any two of the three paths, and the
  interiors of the three paths lie in distinct components of $H
  \setminus \{a, b\}$.  Since $H$ is connected and $H \neq H'$, there
  is a vertex in $V(H) \setminus V(H')$ with a neighbor $c$ in one of
  $P_1, P_2, P_3$.  Since $H$ is $2$-connected, $\{c\}$ is not a
  cutset of $H$ and there exists a path $R$ from $c$ to some other
  vertex $c'$ in $H'$.  Since no path like $S$ exists, $R$ must have
  its two ends in the same branch of $H'$, say in $P_1$.  It follows
  that $P_1$ has an interior vertex, and we call $C$ the component of
  $H\sm \{a, b\}$ that contains the interior of $P_1$ union the
  component that contains the interior of $R$.  Now, we put $A = \{a,
  b \} \cup V(H) \sm C$, $B=C\cup \{a, b\}$ and we observe that $(A,
  B)$ is a cyclic $2$-separation of $H$, a contradiction.
\end{proof}

\begin{lemma}
  \label{l:2implytheta}
  Let $H$ be a cyclically $3$-connected graph and let $a, b$ be two
  branch-vertices of $H$ such that there exist two distinct branches
  of $G$ between them.  Then $H$ is a theta.
\end{lemma}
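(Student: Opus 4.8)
\textbf{Proof proposal for Lemma~\ref{l:2implytheta}.}

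The plan is to apply Lemma~\ref{l:thetask4} to the three paths that naturally present themselves, and then rule out the $K_4$-subdivision outcome using the hypothesis that there are two distinct branches between $a$ and $b$. First I would let $B_1, B_2$ be two distinct branches of $H$ with ends $a, b$; each $B_i$ is an induced path of $H$, and $a, b$ are branch-vertices by assumption. Since $H$ is $2$-connected and not a cycle, $a$ has degree at least three, so there is an edge (hence the start of a branch) leaving $a$ that is not the first edge of $B_1$ or $B_2$; following the graph from there I can find a third induced path $P_3$ from $a$ to $b$ internally disjoint from $B_1 \cup B_2$. (Concretely: in $H \setminus (\mathrm{int}(B_1) \cup \mathrm{int}(B_2))$ the vertices $a, b$ are still joined by a path, since otherwise one of $a, b$ together with nothing — or rather the set $\{a,b\}$ minus one vertex — would separate $H$, contradicting $2$-connectivity; and this path can be taken induced and, by a short degree argument at $a$, distinct from $B_1$ and $B_2$.) Set $P_1 = B_1$, $P_2 = B_2$.

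Now I apply Lemma~\ref{l:thetask4} to $a, b, P_1, P_2, P_3$. The lemma gives two possible outcomes. In the first outcome, $P_1, P_2, P_3$ are branches of $H$ of length at least two and $H = P_1 \cup P_2 \cup P_3$, so $H$ is a theta and we are done. So it suffices to exclude the second outcome. In that outcome there is a path $S$ of $H$ with one end in the interior of some $P_i$, the other end in the interior of some $P_j$ ($i \neq j$), interior disjoint from $P_1 \cup P_2 \cup P_3$. The key point is that $P_1 = B_1$ and $P_2 = B_2$ are \emph{branches} of $H$: every interior vertex of a branch has degree exactly two in $H$. Hence $S$ cannot have an end in the interior of $P_1$ (such an interior vertex already has its two incident edges inside $P_1$, leaving no room for an edge of $S$), and likewise not in the interior of $P_2$. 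Therefore both ends of $S$ would have to lie in the interior of $P_3$ — but the second outcome requires the two ends of $S$ to be in the interiors of \emph{distinct} paths $P_i, P_j$. This is a contradiction, so the second outcome is impossible.

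The only subtlety — and the one step I would write out carefully — is the construction of the third path $P_3$ and the verification that it is internally disjoint from $B_1 \cup B_2$ and can be taken induced; this is where $2$-connectivity and the fact that $a$ is a branch-vertex (so $\deg_H(a) \geq 3$, giving a genuinely new edge at $a$) are used. Everything after that is a direct appeal to Lemma~\ref{l:thetask4} plus the degree-two property of branch interiors, which immediately kills the $K_4$-subdivision alternative. I do not expect any real obstacle; this is essentially a clean corollary of Lemma~\ref{l:thetask4}.
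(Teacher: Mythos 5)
Your proposal is correct and follows essentially the same route as the paper: take the two branches as $P_1,P_2$, produce a third induced $(a,b)$-path in the rest of the graph (the paper does this via the $2$-separation $(V(P_1\cup P_2),\,(V(H)\setminus V(P_1\cup P_2))\cup\{a,b\})$ and $2$-connectivity), apply Lemma~\ref{l:thetask4}, and rule out the second outcome because interior vertices of a branch have degree two. Your explicit degree-two argument is exactly the reason behind the paper's one-line remark that the second outcome cannot occur, so no further comment is needed.
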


\begin{proof}
    Let $P_1, P_2$ be two distinct branches of $H$ whose ends are $a,
    b$.  Put $A = V(P_1 \cup P_2)$, $B = (V(H) \sm A)\cup\{a, b\}$,
    and observe that $(A, B)$ is a $2$-separation of $H$.  Since $H$ is
    not a cycle, $B$ contains at least three vertices, and $H[B]$
    contains a shortest path $P_3$ from $a$ to $b$ since $H$ is
    $2$-connected.  We apply Lemma~\ref{l:thetask4} to $P_1, P_2, P_3$.
    Since $P_1, P_2$ are branches, the second outcome cannot happen.
    So $H$ is a theta.
\end{proof}

\begin{lemma}
  \label{l:descc3c}
  A graph $H$ is cyclically $3$-connected if and only if it is either
  a theta or a subdivision of a $3$-connected graph.
\end{lemma}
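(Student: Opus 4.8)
The plan is to prove both directions of the equivalence, the easy direction first.

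\medskip

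\noindent\textbf{The easy direction.} Suppose $H$ is a theta or a subdivision of a $3$-connected graph $G_0$. In either case $H$ is $2$-connected (a theta clearly is, and a subdivision preserves $2$-connectivity from the $3$-connectivity of $G_0$) and is not a cycle (a theta has two vertices of degree three; a subdivision of a $3$-connected graph has at least four branch-vertices). It remains to rule out a cyclic $2$-separation $(A,B)$. By Lemma~\ref{l:c3ccutset}-style reasoning applied directly — or rather, arguing from scratch since we cannot yet invoke that $H$ is cyclically $3$-connected — suppose $(A,B)$ is a proper $2$-separation with $A\cap B=\{a,b\}$. For the theta case: the three branches each join the two degree-$3$ vertices $u,v$; a $2$-cutset $\{a,b\}$ of a theta must, after a short case analysis on where $a,b$ lie among the branches, leave one side a path, so the separation is not cyclic. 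For the subdivision case: $\{a,b\}$ corresponds in $G_0$ to at most two ``old'' vertices (a branch-internal vertex of $H$ sits on a unique branch, i.e.\ a unique edge of $G_0$), and since $G_0$ is $3$-connected, deleting these leaves $G_0$ — hence $H$ — with a connected remainder that retains cycles on one side only if the other side is a path; one checks the cyclic case is impossible. So $H$ is cyclically $3$-connected.

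\medskip

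\noindent\textbf{The hard direction.} Suppose $H$ is cyclically $3$-connected. Since $H$ is $2$-connected and not a cycle, it has at least two branch-vertices, and in fact (a $2$-connected non-cycle has) at least two. Let $G_0$ be the graph obtained from $H$ by suppressing all degree-$2$ vertices, i.e.\ $G_0$ has vertex set the branch-vertices of $H$ and an edge for each branch of $H$; note $G_0$ may be a multigraph a priori, and $H$ is a subdivision of $G_0$. If $G_0$ has two branch-vertices $a,b$ joined by two distinct branches, then by Lemma~\ref{l:2implytheta} $H$ is a theta, and we are in the first case. So assume $G_0$ has no such parallel pair, i.e.\ $G_0$ is a simple graph. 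It remains to show $G_0$ is $3$-connected. It is certainly $2$-connected (a cutvertex or a $2$-cutset of $G_0$ made of branch-vertices would give a cutvertex resp.\ $2$-cutset of $H$, and for the $2$-cutset case Lemma~\ref{l:c3ccutset} or Lemma~\ref{l:2cutset} forces one side to be a single branch, contradicting that both cut vertices are branch-vertices with the remainder containing a cycle — or, if $G_0$ has only three vertices, $G_0=K_4$ minus nothing, i.e.\ we handle small cases by hand). To finish: suppose $G_0$ has a $2$-cutset $\{a,b\}$ with $G_0\setminus\{a,b\}$ having components whose vertex sets partition into $X,Y$ (both nonempty). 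Pull this back to $H$: let $A$ be $\{a,b\}$ together with the vertices of $H$ on branches inside $X\cup\{a,b\}$-induced part, and $B$ similarly for $Y$. Then $(A,B)$ is a $2$-separation of $H$. Since $a,b$ are branch-vertices of $H$, each has degree $\ge 3$ in $H$, so each has at least two neighbours within $A$ and at least two within $B$ (here one must be slightly careful: a branch-vertex could in principle have all its branches on one side — but then it would not be needed in the cutset; minimality of the cutset $\{a,b\}$, or working with components directly, handles this), hence both $H[A]$ and $H[B]$ have a vertex of degree $\ge 2$ and, chasing a path, a cycle. So $(A,B)$ is a cyclic $2$-separation of $H$, contradicting cyclic $3$-connectivity. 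Therefore $G_0$ is $3$-connected and $H$ is a subdivision of it.

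\medskip

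\noindent\textbf{Main obstacle.} The delicate point is the pullback argument in the hard direction: ensuring that a $2$-cutset of the suppressed graph $G_0$ genuinely yields a \emph{cyclic} $2$-separation of $H$, which requires that both sides retain enough of the graph to host a cycle. The subtlety is that one of the two cut vertices $a,b$ might have all but a bounded number of its incident branches on one side, and one must argue — via minimality of the cutset, or by choosing the separation carefully, or by treating the low-vertex-count cases ($|V(G_0)|\le 4$, forcing $G_0=K_4$) separately — that each side still contains a branch-vertex or at least a pair of internally disjoint $a$–$b$ paths, hence a cycle. Handling these degenerate configurations cleanly, together with the symmetric small-case check that $G_0$ is not a triangle (in which case $H$ would be a cycle, already excluded), is where the real work lies; the rest is bookkeeping with Lemmas~\ref{l:c3ccutset}, \ref{l:2cutset}, and \ref{l:2implytheta}.
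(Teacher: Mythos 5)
Your overall route is the same as the paper's: suppress the degree-two vertices to form the (multi)graph $G_0$ on the branch-vertices, handle a multiple edge via Lemma~\ref{l:2implytheta} (theta case), and otherwise show $G_0$ is $3$-connected by pulling a $2$-separation of $G_0$ back to a cyclic $2$-separation of $H$. The problem is that the one step you yourself single out as the main obstacle is never actually closed, and the inference you offer in its place is invalid. Knowing that $a$ and $b$ each have at least two neighbours inside a side $A$ only gives vertices of degree $\ge 2$ in $H[A]$, and ``a vertex of degree $\ge 2$, chasing a path, a cycle'' is simply false: a graph in which $a$ and $b$ both have degree $2$ can be a tree (take $a$ adjacent to $v_1,v_2$ and $b$ adjacent to $v_1,v_3$). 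The remedies you gesture at (minimality of the cutset, small cases $|V(G_0)|\le 4$) address a different, minor point (whether $a,b$ have neighbours on both sides), not the real question of why each side must contain a cycle.

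The argument that closes the gap is degree-based but aimed at the \emph{interior} of each side, not at $a,b$. Once $G_0$ is simple, every vertex of $G_0$ has degree at least three. In a proper $2$-separation $(A,B)$ of $G_0$ with $A\cap B=\{a,b\}$, every vertex of $A\setminus B$ keeps all its neighbours inside $A$, so $G_0[A]$ has a vertex of degree $\ge 3$ and is not a path; and $G_0[A]$ cannot be a tree either, since a tree that is not a path has at least three leaves, so some leaf would lie in $A\setminus B$ and have degree at most one in the $2$-connected graph $G_0$, a contradiction. Hence $G_0[A]$, and symmetrically $G_0[B]$, contains a cycle; these cycles survive when the edges are re-subdivided (assigning the subdivision vertices of a possible edge $ab$ to one fixed side), which yields the cyclic $2$-separation of $H$ and the desired contradiction. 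This is exactly how the paper argues (equivalently, one can run the same forest/leaf argument directly in $H$, using that vertices of $A\setminus\{a,b\}$ retain degree $\ge 2$, and branch-vertices degree $\ge 3$, inside their side). Without this step your proof of the hard direction is incomplete; the rest of your outline, including the multiple-edge/theta reduction and the (terse) easy direction, matches the paper.
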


\begin{proof}
  A $3$-connected graph has at least four vertices.  So, thetas and
  subdivisions of $3$-connected graphs are cyclically $3$-connected.
  Conversely, if $H$ is a cyclically $3$-connected graph, then let
  $H'$ be the multigraph on the branch-vertices of $H$ obtained as
  follows: for every branch of $H$ with ends $a, b$, we put an edge
  $ab$ in $H'$.  If $H'$ has a multiple edge, then there are two
  vertices $a, b$ of $H$ and two branches $P, Q$ of $H$ with ends $a,
  b$.  So, by Lemma~\ref{l:2implytheta}, $H$ is a theta.  Now assume
  that $H'$ has no multiple edge.  Then $H'$ is a graph and $H$ is a
  subdivision of $H'$.  Since $H$ is $2$-connected, $H'$ is also
  $2$-connected.  We claim that $H'$ is $3$-connected.  For suppose
  that $H'$ has a proper $2$-separation $(A, B)$.  Since $H'$ has
  minimum degree at least three, it is impossible that $H'[A]$ is a
  path.  Since $H'$ is $2$-connected, $H'[A]$ cannot be a tree and so
  it must contain a cycle.  Symmetrically, $H'[B]$ must contain a
  cycle.  Let $A'$ be the union of $A$ and of the set of vertices of
  degree two of $H$ that arise from subdividing edges of $H'[A]$.  Let
  $B'$ be defined similarly.  If $H'[A\cap B]$ is an edge and vertices
  of $H$ arise from the subdivision of that edge, we put them in $A'$.
  Now we observe that $(A', B')$ is a cyclic $2$-separation of $H$, a
  contradiction.  This proves our claim.  It follows that $H$ is a
  subdivision of a $3$-connected graph.
\end{proof}

\begin{lemma}
  \label{l:addedge}
 Let $H$ be a cyclically $3$-connected graph and $a, b$ be two
 distinct vertices of $H$.  If no branch contains both $a, b$, then
 $H' = (V(H), E(H) \cup\{ab\})$ is a cyclically $3$-connected graph
 and every graph obtained from $H'$ by subdividing $ab$ is cyclically
 $3$-connected.
\end{lemma}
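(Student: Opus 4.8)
The plan is to invoke Lemma~\ref{l:descc3c}: $H$ is either a theta or a subdivision of a $3$-connected graph, and I treat these two cases. First observe that the hypothesis forces $ab\notin E(H)$, since an edge of a $2$-connected non-cycle graph lies in exactly one branch and that branch contains both its ends; so $H'=(V(H),E(H)\cup\{ab\})$ really is obtained by adding a new edge. Also $H'$ is $2$-connected (it is obtained from a $2$-connected graph by adding an edge) and is not a cycle (it inherits from $H$ a vertex of degree at least~$3$), so $H'$ is a subdivision of the multigraph $R$ obtained from $H'$ by suppressing its degree-$2$ vertices, i.e.\ the multigraph on the branch-vertices of $H'$ with one edge per branch of $H'$. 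The strategy is to show that in both cases $R$ is a simple $3$-connected graph on at least four vertices; then $H'$, being a subdivision of $R$, is cyclically $3$-connected by Lemma~\ref{l:descc3c}. Moreover $ab$ is a branch of $H'$ (its ends have degree at least~$3$ in $H'$), hence corresponds to a single edge of $R$, so subdividing $ab$ yields another subdivision of $R$, which is again cyclically $3$-connected by Lemma~\ref{l:descc3c}; this settles the second assertion of the lemma along the way.

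\emph{Case $1$: $H$ is a theta}, with branch-vertices $x,y$ and branches $B_1,B_2,B_3$. Since $x$ and $y$ lie on all three branches, the hypothesis forces $a,b\notin\{x,y\}$, so $a$ lies in the interior of a unique branch, say $B_1$, and $b$ in the interior of a unique branch, necessarily distinct from $B_1$, say $B_2$. Then the branch-vertices of $H'$ are $x,y,a,b$ and its branches are the four subpaths of $B_1\cup B_2$ cut at $a$ and $b$, the branch $B_3$, and the edge $ab$; hence $R$ has vertex-set $\{x,y,a,b\}$ with one edge between each pair, that is, $R=K_4$. So $H'$ is a subdivision of $K_4$ and we are done by the discussion above.

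\emph{Case $2$: $H$ is a subdivision of a $3$-connected graph $G_0$} (so $|V(G_0)|\ge 4$ and the branches of $H$ are in bijection with the edges of $G_0$). For each of $a,b$: if it is a branch-vertex of $H$, it is a vertex of $G_0$, and we set $\hat a=a$ (resp.\ $\hat b=b$); otherwise it lies in the interior of the branch of some edge $e_a$ (resp.\ $e_b$) of $G_0$, and we let $\hat a$ (resp.\ $\hat b$) be the corresponding subdivision vertex. The hypothesis forces: the ends of $e_a$ (resp.\ $e_b$) to differ from $b$ (resp.\ $a$); $e_a\neq e_b$ when both $a,b$ are interior vertices; and $ab\notin E(G_0)$ when both are branch-vertices. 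One then checks that $R$ is obtained from $G_0$ by subdividing $e_a$ and $e_b$ once (only for whichever of $a,b$ is an interior vertex) and adding the edge $\hat a\hat b$; in particular $R$ is a simple graph with at least four vertices and minimum degree at least~$3$. It remains to prove that $R$ is $3$-connected. Suppose not, and let $S$ be a cutset of $R$ with $|S|\le 2$. A short case analysis according to whether the subdivision vertices (if any) lie in $S$ reduces this to two standard facts: a $3$-connected graph is $3$-edge-connected, so $G_0$ minus at most two edges is connected and $G_0$ minus at most two vertices is connected; and each new degree-$3$ vertex $\hat a$ is adjacent to both ends of $e_a$, so it bridges the at most two pieces into which deleting $e_a$ could split $G_0-S$, while the edge $\hat a\hat b$ joins the two new vertices. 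Thus $R-S$ is connected, a contradiction, so $R$ is $3$-connected and Case~$2$ is complete.

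The main obstacle is exactly this verification in Case~$2$ that $R$ is $3$-connected; the rest is bookkeeping, and the reduction of the subdivision statement is free. (One could avoid Lemma~\ref{l:descc3c} and instead argue directly that $H'$ has no cyclic $2$-separation: given such a separation $(A,B)$ with $A\cap B=\{x,y\}$, the vertices $a,b$ cannot be separated, so say both lie in $A$; then $(A,B)$ is a proper $2$-separation of $H$, and Lemma~\ref{l:c3ccutset} forces one side to be a path contained in a branch of $H$ — which, checking cases, must be the side containing both $a$ and $b$, contradicting the hypothesis — with the subdivision case handled by first contracting the new path back to the edge $ab$. But the route through Lemma~\ref{l:descc3c} is cleaner and yields the subdivision statement with no extra work.)
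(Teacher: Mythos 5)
Your proof is correct, but the main route you take is genuinely different from the paper's. The paper proves the first assertion in three lines: if $H'$ had a cyclic $2$-separation $(A,B)$, then since $ab$ is an edge we may assume $a,b\in A$, so $(A,B)$ is also a proper $2$-separation of $H$ with a cycle on the $B$-side, and Lemma~\ref{l:c3ccutset} forces $H[A]$ to be a path inside a single branch of $H$ --- contradicting the hypothesis that no branch contains both $a$ and $b$; only then does it invoke Lemma~\ref{l:descc3c} (noting $H'$ cannot be a theta because of the edge $ab$) to get the subdivision statement, exactly as you do via ``$ab$ is a branch of $H'$ corresponding to one edge of $R$''. You instead apply Lemma~\ref{l:descc3c} to $H$ itself, split into the theta case (where $R=K_4$, a clean and complete argument) and the case where $H$ is a subdivision of a $3$-connected graph $G_0$, explicitly identify the suppressed multigraph $R$ of $H'$ as $G_0$ with $e_a$, $e_b$ subdivided (as needed) plus the edge $\hat a\hat b$, and verify by hand that $R$ is simple and $3$-connected. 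That verification does go through --- your ``two standard facts'' plus the observation that each subdivision vertex is adjacent to both ends of its edge really do cover all placements of a would-be $2$-cutset $S$ relative to $\hat a,\hat b$ --- but it is the heaviest part of your write-up and is only sketched, whereas the paper's use of Lemma~\ref{l:c3ccutset} makes this work unnecessary; notably, the ``alternative'' you relegate to your final parenthesis is essentially the paper's actual proof (except that the paper handles the subdivided edge through Lemma~\ref{l:descc3c} rather than by contracting it back). What your route buys is an explicit description of the underlying $3$-connected graph of $H'$, at the price of a longer case analysis that should be written out in full to be complete.
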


\begin{proof}
  The graph $H'$ is clearly $2$-connected and not a cycle.  So we need
  only prove that $H'$ has no cyclic $2$-separation.  Suppose it has a
  cyclic $2$-separation $\{A, B\}$.  Up to symmetry we may assume that
  $a, b$ lie in $A$, because there is no edge between $A\setminus B$
  and $B\setminus A$.  Since $(A, B)$ is cyclic in $H'$, $B$ has a
  cycle in $H'$ and so in $H$.  Hence, by Lemma~\ref{l:c3ccutset}, $A$
  induces a path of $H$ and so it is included in a branch of $H$,
  contrary to our assumption.

  By Lemma~\ref{l:descc3c}, $H'$ is a subdivision of a $3$-connected
  graph since it cannot be a theta because of the edge $ab$. So, every
  graph that we obtain by subdividing $ab$ is a subdivision of a
  $3$-connected graph, and so is cyclically $3$-connected.   
\end{proof}

\begin{lemma}
  \label{l:2edges}
  Let $H$ be a cyclically $3$-connected graph, let $Z$ be a cycle of
  $H$ and $a, b, c, d$ be four distinct vertices of $Z$ that lie in
  this order on $Z$ and such that $ab \in E(Z)$ and $cd \in E(Z)$.
  Let $P$ be the subpath of $Z$ from $a$ to $d$ that does not contain
  $b, c$, and let $Q$ be the subpath of $Z$ from $b$ to $c$ that does
  not contain $a, d$.  Suppose that the edges $ab$, $cd$ are in two
  distinct branches of $H$.  Then there is a path $R$ with an
  end-vertex in $P$, an end-vertex in $Q$, no interior vertex in $Z$,
  and $R$ is not from $a$ to $b$ or from $c$ to $d$.
\end{lemma}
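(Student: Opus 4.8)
The plan is to consider the subdivision-of-$K_4$ (or theta) structure that the four vertices $a,b,c,d$ together with $Z$ almost impose, and use the hypothesis that $ab$ and $cd$ lie in different branches to force extra connections. First I would set up the two arcs: $P$ is the $(a,d)$-arc of $Z$ avoiding $b,c$, and $Q$ is the $(b,c)$-arc avoiding $a,d$; note that $Z = P\cup Q\cup\{ab,cd\}$ and that $P,Q$ are disjoint (since $a,b,c,d$ are distinct and in this cyclic order). The goal is a path $R$ from an internal-or-end vertex of $P$ to one of $Q$, with no interior vertex on $Z$, and which is not the single edge $ab$ nor $cd$.

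The key step is to apply Lemma~\ref{l:thetask4}, or an argument in its spirit, to a suitable triple of paths. Consider first the degenerate possibilities: if $P$ has length one then $P=ad$ is an edge and $Z$ itself, with the single vertex route, is close to a $K_4$ minus structure; more cleanly, I would take as my three paths $P$, $Q$, and a shortest $(a,b)$-path or $(c,d)$-path living outside $Z$ (such a path exists because $H$ is $2$-connected and $H$ is not just $Z$ — if $H=Z$ then $Z$ is a single branch, contradicting that $ab,cd$ lie in two distinct branches). The honest way: since $ab$ and $cd$ are in distinct branches, $Z$ is not contained in a single branch, so $H\neq Z$, and some vertex of $V(H)\setminus V(Z)$ attaches to $Z$, or there is a chord of $Z$. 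I would let $C$ be the vertex set of a component of $H\setminus V(Z)$ together with its neighbours on $Z$, or a chord, and analyse where its attachments land on $Z$. If every such attachment set is confined to $P$ alone, or to $Q$ alone, or to $\{a,b\}$, or to $\{c,d\}$, then I can build a cyclic $2$-separation of $H$ separating that part from the rest using a two-vertex cutset inside a branch — contradicting cyclic $3$-connectedness via Lemma~\ref{l:c3ccutset}. Concretely, if all attachments lie in $P\cup\{b\}$ say, then $\{$endpoints of $P\} = \{a,d\}$ together with the $Z$-structure gives a $2$-separation whose small side is a path, impossible because the small side contains the cycle $Z$. So some connection must go from (the interior-or-end of) $P$ to (the interior-or-end of) $Q$, avoiding the rest of $Z$: that is exactly a candidate $R$. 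Finally I would check $R$ is not the forbidden edge $ab$ or $cd$: $ab$ has both ends in $\{a\}\cup\{b\}$ with $a\in P$, $b\in Q$, so a priori $ab$ is a legal "$R$" unless we exclude it — but $ab$ is itself a branch edge of $Z$, hence not a chord and not supplied by an outside component, so the $R$ we extract is genuinely different from $ab$, and symmetrically from $cd$.

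The main obstacle I anticipate is the careful bookkeeping of the several degenerate cases — when $P$ or $Q$ has length one, when the only "extra" structure is a single chord of $Z$ rather than an outside component, and when an attachment lands exactly on one of $a,b,c,d$ so that the would-be $R$ threatens to coincide with $ab$ or $cd$. In each such case one must verify that a cyclic $2$-separation genuinely arises (so that cyclic $3$-connectedness is contradicted) unless the desired path $R$ exists; the cleanest tool for this is Lemma~\ref{l:c3ccutset}, which says any proper $2$-separation has one side a path inside a branch, combined with the observation that the side containing $Z$ cannot be a path. I would organise the proof by choosing $R$ to be a shortest path among all paths with one end in $V(P)$, one end in $V(Q)$, and no interior vertex on $Z$ (such a path exists once $H\neq Z$ and no two-vertex cutset in a branch isolates a chunk of $Z$), and then derive a contradiction in the remaining configurations where $R$ fails to satisfy the "not $ab$, not $cd$" clause, by exhibiting the cyclic $2$-separation directly.
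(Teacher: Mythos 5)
Your proposal has a genuine gap at its central step. You claim that if a component of $H\setminus V(Z)$ (or a chord) has its attachment confined to $P$, to $Q$, to $\{a,b\}$ or to $\{c,d\}$, then a cyclic $2$-separation arises and cyclic $3$-connectedness is contradicted. That is false: take $H$ to be $Z$ plus a single vertex $t$ adjacent to three interior vertices of $P$. This $H$ is a subdivision of $K_4$, hence cyclically $3$-connected, $H\neq Z$, every attachment lies inside $P$, and no path $R$ as in the lemma exists; the $2$-separation you would build (the component together with a segment of $P$ on one side, the rest on the other) is not cyclic, because the side containing the remainder of $Z$ is merely a path. What saves the lemma in this example is that $ab$ and $cd$ lie in the \emph{same} branch of $H$, i.e.\ the hypothesis fails. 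So the distinct-branches hypothesis must enter the contradiction itself, whereas in your argument it is used only once, to deduce $H\neq Z$; as structured, your argument would ``prove'' the conclusion without that hypothesis, which is impossible. The paper's proof is organized differently and much more economically: if no valid $R$ exists, then $\{a,c\}$ is a cutset separating $b$ from $d$ and $\{b,d\}$ is a cutset separating $a$ from $c$; applying Lemma~\ref{l:c3ccutset} to each (and Lemma~\ref{l:2cutset} to rule out the adjacent pair $\{a,b\}$ being a cutset) forces both $a\tp P\tp d\tp c$ and $b\tp a\tp P\tp d$ to lie in a branch, hence $ab$ and $cd$ lie in the same branch, contradicting the hypothesis — note the final contradiction is with the distinct-branches assumption, not with cyclic $3$-connectivity.

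A second, smaller but real error is your reading of the conclusion. ``$R$ is not from $a$ to $b$ or from $c$ to $d$'' excludes \emph{every} path whose end-pair is $\{a,b\}$ or $\{c,d\}$ — for instance a long path through an outside component attached exactly at $a$ and $b$ — not merely the edges $ab$ and $cd$. Your final check (``$ab$ is a branch edge, not a chord and not supplied by an outside component, so the $R$ we extract is different from $ab$'') only rules out $R$ being the edge itself, so your extraction could still return a forbidden $a$–$b$ or $c$–$d$ path. Listing attachment sets $\{a,b\}$ and $\{c,d\}$ among the ``bad'' cases partially acknowledges this, but the verification at the end must exclude all such paths, and in the paper this is exactly why the cutsets $\{a,c\}$ and $\{b,d\}$ are the right objects: any $b$–$d$ path avoiding $a,c$ (or $a$–$c$ path avoiding $b,d$) automatically has an admissible end-pair.
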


\begin{proof}
 Suppose there does not exist a path like $R$.  Then $\{a, c\}$ is a
 cutset of $H$ that separates $b$ from $d$.  By
 Lemma~\ref{l:c3ccutset}, we may assume up to symmetry that $a \tp P
 \tp d \tp c$ is included in a branch of $H$.  Also $\{b, d\}$ is a
 cutset, so one of $b \tp a \tp P \tp d$, $b \tp Q \tp c \tp d$ is
 included in a branch of $H$.  If it is $b \tp Q \tp c \tp d$, then
 $\{a, b\}$ is a cutset of $H$ contradictory to Lemma~\ref{l:2cutset}.
 So it is $b \tp a \tp P \tp d$, and $b \tp a \tp P \tp d \tp c$ is
 included in a branch of $H$.  Hence, $ab$, $cd$ are in the same
 branch of $H$, which contradicts our assumption.
\end{proof}

\begin{lemma}
  \label{l:cycleedgek4}
  Let $H$ be a subdivision of a $3$-connected graph.  Let $C$ be a
  cycle and $e$ an edge of $H$ such that $C$ and $e$ are edgewise
  disjoint.  Then there exists a subdivision of $K_4$ that is a
  subgraph of $H$ and that contains $C$ and $e$.
\end{lemma}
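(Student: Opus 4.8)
The plan is to build the desired subdivision of $K_4$ by finding, inside $H$, a theta or a $K_4$-subdivision that uses both $C$ and $e$, exploiting that $H$ is cyclically $3$-connected (being a subdivision of a $3$-connected graph, by Lemma~\ref{l:descc3c}). Write $e = uv$. First I would handle the easy structural reduction: if both ends $u,v$ already lie on $C$, then since $e$ is edgewise disjoint from $C$, $e$ is a chord of $C$ joining two vertices $u,v$ that split $C$ into two subpaths $C_1, C_2$, each of length at least one; but then $C \cup e$ already contains a theta (the two arcs $C_1, C_2$ together with $e$), and a theta is a subdivision of $K_4$... no — a theta is not a $K_4$-subdivision. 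So instead I must produce a fourth path. Here I would use $3$-connectivity: $H\sm\{u,v\}$ is connected (indeed, removing two vertices cannot disconnect a subdivision of a $3$-connected graph unless they are the only two branch-vertices of a theta, which is excluded since $H$ is $3$-connected-based), so there is a path in $H$ from an interior vertex of $C_1$ to an interior vertex of $C_2$ avoiding $u,v$; taking a minimal such path $S$ with interior disjoint from $C$ yields $C \cup e \cup S$, a subdivision of $K_4$ with corners $u, v$ and the two ends of $S$ — unless $S$ has an end equal to $u$ or $v$, which the choice of interior vertices prevents, or unless $S$ meets $C$ in its interior, handled by minimality. The remaining care is the case where one or both ends of $e$ are off $C$.

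For the general case, let me argue by the number of ends of $e$ on $C$. If neither $u$ nor $v$ is on $C$: since $H$ is $2$-connected and $C$ is a cycle, there are at least two vertex-disjoint paths from $\{u,v\}$ (more precisely from the component structure around $e$) to $C$; using that $H$ is actually cyclically $3$-connected and not a cycle, I would show $u$ has two paths to $C$ meeting it in distinct vertices, and likewise $v$, and then carefully extract from $e$ together with these connecting paths a ``handle'' that attaches to $C$ at two distinct vertices and passes through $e$. Concretely, the cleanest route is: let $H_0 = H$; repeatedly contract/suppress degree-two vertices off $C\cup e$ is messy, so instead I would invoke Lemma~\ref{l:thetask4} or Lemma~\ref{l:2implytheta}-style arguments. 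Let $a,b$ be the first vertices of $C$ reached by a shortest path from $u$ and from $v$ respectively (in $H\sm(E(C))$, say), chosen so that together with $e$ they form an induced path $P$ from $a$ to $b$ with interior disjoint from $C$ (possible by taking things minimal); if $a\ne b$, then $P$ is a chord-path of $C$, and we are reduced to the previous paragraph's situation (chord case) with $P$ in the role of $e$, giving a $K_4$-subdivision containing $C$ and $P \supseteq e$. If $a = b$, then $u,v$ and all the connecting material hang off $C$ at a single vertex $a$; but then $\{a\}\cup(\text{one more vertex})$ would be a small cutset separating $e$ from $C$, contradicting $3$-connectivity — more carefully, $3$-connectivity forces a second attachment vertex, so the case $a=b$ in fact cannot be the only possibility, and re-choosing the paths gives two distinct attachment points.

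The main obstacle is the bookkeeping in the last case: ensuring that the path we extract through $e$ attaches to $C$ at \emph{two distinct} vertices and has no spurious interior intersection with $C$, so that the union with $C$ is genuinely a subdivision of $K_4$ (four corners of degree three, six internally disjoint branches) rather than a degenerate configuration. I expect this to be dispatched by a minimal-counterexample / minimal-path choice together with a direct appeal to $3$-connectivity: if every path from the $e$-side to $C$ hit $C$ in a single common vertex $a$, then $\{a\}$ (or $\{a\}$ plus one interior branch-vertex) would be a cutset of order at most two in a subdivision of a $3$-connected graph, which is impossible. Once two distinct attachment points $a,b$ are secured together with an induced path through $e$ joining them with interior off $C$, the union with $C$ is the required $K_4$-subdivision, and the three remaining branches of $K_4$ are the two arcs of $C$ between $a$ and $b$ and the path through $e$, with $a, b$ and — wait, that is only a theta again; the fourth corner and remaining branches come from the fact that $a$ or $b$ may be a branch-vertex of $C$ of degree three already, or, if not, we repeat the chord-splitting argument of the first paragraph one more time to install a genuine fourth corner. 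This final splitting, applied to the theta we have built, produces the $K_4$-subdivision and completes the proof.
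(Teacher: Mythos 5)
Your plan is essentially the paper's proof: attach $e$ to $C$ by two vertex-disjoint paths (which exist by $2$-connectedness), so that the two arcs of $C$ together with a path through $e$ form a theta on the two attachment vertices, and then use the fact that $H$ is a subdivision of a $3$-connected graph to find a fourth path joining two of these three branches while avoiding the attachment vertices, giving the desired subdivision of $K_4$ containing $C$ and $e$. The only caution is to realize the attachment step via the two-disjoint-paths (Menger-type) statement you mention, rather than via independent shortest paths from $u$ and $v$ (which could intersect); with that, and with the fourth path taken in all cases (not only when the two attachment vertices fail to have degree three), your argument coincides with the paper's.
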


\begin{proof}
  Since $H$ is $2$-connected, there exist two vertex-disjoint paths
  $R=x \tp \cdots \tp x'$ and $S= y \tp \cdots \tp y'$ between $C$ and
  $e$, with $e=xy$ and $x', y'\in C$.  Let $P_1, P_2$ be the two
  edge-disjoint paths of $C$ with endvertices $x', y'$.  Let $P_3= x
  \tp \cdots \tp x' \tp y' \tp \cdots \tp y$.  Then $P_1, P_2, P_3$
  are three edge-disjoint paths between $x'$ and $y'$, so at most one
  of them is an edge.

  Vertices $x', y'$ have degree at least three in $H$, so they are
  also vertices of the $3$-connected graph of which $H$ is a
  subdivision.  So $H\setminus \{x', y'\}$ is connected.  Let $P$ be a
  shortest path connecting two paths among $P_1\setminus \{x', y'\},
  P_2\setminus \{x', y'\}, P_3\setminus \{x', y'\}$.  Then $P_1\cup
  P_2\cup P_3\cup P$ is a subdivision of $K_4$ satisfying the lemma.
\end{proof}


\section{Line graph of substantial  graphs}

A \emph{flat branch} in a graph is a branch such that no triangle
contains two vertices of it.  So a non-flat branch is an edge that
lies in a triangle.  Note that any branch of length zero is flat.
Moreover, every branch of length at least two is flat.

A \emph{triangular} subdivision of $K_4$ is a subdivision of $K_4$
that contains a triangle.  A \emph{square theta} is a theta that
contains a square, in other words, a theta with two branches of length
two.  A \emph{square prism} is a prism that contains a square, in
other words, a prism with two flat branches of length one.  Note that
a square prism is the line graph of a square theta.  A \emph{square
  subdivision of $K_4$} is a subdivision of $K_4$ whose corners form a
(possibly non-induced) square.  An induced square in a graph is
\emph{even} if an even number of edges of the square lie in a triangle
of the graph.  It easily checked that the line graph of a subdivision
$H$ of $K_4$ contains an even square if and only if $H$ is a square
subdivision of $K_4$; in that case the vertices in any even square of
$L(H)$ arise from the edges of a square on the branch-vertices of $H$.
It is easily checked that a prism contains only even squares.

A \emph{diamond} is a $K_4$ minus one edge.  A \emph{closed diamond}
is any graph obtained from a $K_4$ by subdividing only one edge.  In a
closed diamond that is not a $K_4$, the four corners induce a diamond,
there is a unique branch $P$ of length at least two, and we say that
$P$ \emph{closes} the diamond.

If $X, Y$ are two paths in a graph $G$, a \emph{connection between
$X$, $Y$} is a path $P = p \tp \cdots \tp p'$ such that $p$ has a
neighbor in $X$, $p'$ has a neighbor in $Y$, no interior vertex of $P$
has a neighbor in $X\cup Y$, and if $p\neq p'$, then $p$ has no
neighbor in $Y$ and $p'$ has no neighbor in $X$.

The line graph of $K_4$ is isomorphic to $K_{2, 2, 2}$ and is usually
called the \emph{octahedron}.  It has three even squares.  For every
even square $S$ of an octahedron $G$, the two vertices of $G\sm S$
are both links of $S$.  Note also that when $K$ is a square
prism with a square $S$, then $V(K) \setminus S$ is a link of
$S$.

\ 

Given a graph $G$, a graph $H$ such that $L(H)$ is an induced subgraph
of $G$, and a connected induced subgraph $C$ of $V(G) \setminus L(H)$,
we define several types that $C$ may have, according to its attachment
over $L(H)$:

\begin{itemize}
\item
  $C$ is of type \emph{branch} if the attachment of $C$ over $L(H)$
  is included in a flat branch of $L(H)$;

\item
  $C$ is of type \emph{triangle} if the attachment of $C$ over $L(H)$ is
  included in a triangle of $L(H)$;

\item $C$ is of type \emph{augmenting} if $C$ contains a connection
  $P= p \tp \cdots \tp p'$ between two distinct flat branches $X, Y$
  of $L(H)$ such that $N_X(p)$ is an edge of $X$, $N_Y(p')$ is an edge
  of $Y$, and there is no edge between $L(H)\setminus (X\cup Y)$ and
  $P$.  We say that $P$ is an \emph{augmenting path} for $C$.
\item $C$ is of type \emph{square} if $L(H)$ contains an even square
  $S$, $C$ contains a link $P$ of $S$, and there is no edge between
  $L(H)\setminus S$ and $P$.  We say that $P$ is a \emph{linking path}
  for $C$.
\end{itemize}

Note that the types may overlap: a subgraph $C$ may be of more than
one type.  Since we view a vertex of $G$ as a connected induced
subgraph of $G$, we may speak about the type of a vertex with respect
to $L(H)$.

\begin{lemma}
  \label{l:compprism}
  Let $G$ be a graph that contains no triangular ISK4. Let $K$ be a
  prism that is an induced subgraph of $G$ and let $C$ be a connected
  induced subgraph of $G\setminus K$.  Then $C$ is either of type
  branch,  triangle,  augmenting or square with respect to $K$.
\end{lemma}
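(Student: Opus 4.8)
The plan is to take a prism $K = L(T)$ where $T$ is a theta with branch-vertices $s,t$ and branches $B_1, B_2, B_3$, and analyze the attachment $N_K(C)$ of $C$ over $K$. Recall that $K$ consists of two triangles (arising from the edges of $T$ incident to $s$, and those incident to $t$) joined by three flat branches $F_1, F_2, F_3$ corresponding to the three branches $B_i$ of $T$; each $F_i$ has length $|B_i| - 1 \ge 1$. First I would dispose of the easy cases: if $N_K(C)$ is contained in a single flat branch of $K$ then $C$ is of type branch, and if it is contained in a triangle of $K$ then $C$ is of type triangle, so from now on we may assume neither holds. In particular $C$ has neighbors in at least two of the $F_i$ (or in one $F_i$ together with a vertex of a triangle not on that branch), so $C$ "spans" the prism in some nontrivial way.

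The main work is to produce either an augmenting path or a linking path, and to show that if neither exists we build a triangular ISK4, contradicting the hypothesis. The strategy is to pick a \emph{minimal} connected subpath $P = p \tp \cdots \tp p'$ of $C$ realizing a "spanning" attachment — i.e. $p$ and $p'$ have neighbors witnessing that $N_K(C)$ is not confined to one branch or one triangle — chosen so that no interior vertex of $P$ has a neighbor outside the two relevant branches $X, Y$. Here a key sublemma (the analogue of Lemma~\ref{l.k33n}/\ref{l.k33C} for prisms) is needed: a single vertex $v \notin K$ whose attachment is not contained in a branch or a triangle must attach to $K$ in one of a very restricted list of ways, because any wilder attachment already yields an induced subdivision of $K_4$ containing one of the prism's two triangles. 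Establishing this local statement — enumerating how a vertex, and then a short path, can attach to a prism without creating a triangular ISK4 — is where essentially all the case analysis lives, and I expect it to be the main obstacle: there are several branches depending on whether the neighborhoods on $X$ and $Y$ are single vertices, edges, or longer, and whether $p=p'$ or not.

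Concretely, once the minimal $P$ is in hand, I would argue as follows. If $N_X(p)$ and $N_Y(p')$ are each a single edge (the "rich" case), and there is no edge between $P$ and $K \setminus (X \cup Y)$, then $P$ is exactly an augmenting path and $C$ is of type augmenting. If instead $P$ links the two triangle vertices of $K$ in the pattern of a square — this is possible precisely when two of the flat branches have length one, so $K$ is a square prism (or when $K$ is small), using that a prism contains only even squares — then $P$ is a linking path and $C$ is of type square. In every remaining configuration, the endpoints' attachments are "too large" or "too small" in a way that lets me route the three branches of a $K_4$-subdivision through parts of the two prism triangles, one of the flat branches, and $P$, keeping one triangle of the prism intact and inducedness along $P$ by minimality; this gives a triangular ISK4 and the desired contradiction. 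I would organize this as a sequence of claims: (1) reduce to the spanning case; (2) the single-vertex attachment lemma; (3) choose minimal $P$ and control its interior; (4) in the rich endpoint case, get augmenting or square; (5) otherwise derive a triangular ISK4.
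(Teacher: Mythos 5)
You have correctly identified the paper's strategy: assume $C$ is not of type branch or triangle, extract a minimal connected subgraph $P$ of $C$ witnessing this, show $P$ is a path whose interior does not attach to $K$, and then argue that the attachments of its ends either form an augmenting path, or a linking path of an even square (which forces $K$ to be a square prism), or can be completed to a triangular ISK4. But the proposal stops at the level of a plan exactly where the proof actually lives. Your steps (2), (4) and (5) are announced rather than carried out, and you yourself flag the attachment analysis as ``the main obstacle''. Nothing in the proposal verifies, for even one configuration, that a triangular ISK4 really arises; in the paper each such configuration is settled by an explicit construction of three paths forming the subdivision (for instance when an end $p$ has two non-adjacent neighbours on a flat branch $X$, or a unique neighbour interior to $X$, or two neighbours in a triangle of $K$ together with neighbours in $Y$ off the triangle, or neighbours on all three flat branches), and these constructions, together with the arguments via paths closing a diamond, are the entire content of the lemma.

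Moreover, two specific points in your plan need proof rather than fiat. First, you say the minimal path $P$ is ``chosen so that no interior vertex of $P$ has a neighbor outside the two relevant branches $X, Y$''; in the paper this is not a choice but a claim (indeed a stronger one: no interior vertex of $P$ has any neighbour in $K$), and its proof already requires producing triangular ISK4s and closed diamonds --- it cannot simply be arranged by selecting $P$ suitably. Second, the ``single-vertex attachment sublemma'' you posit as an analogue of Lemma~\ref{l.k33n} is never stated precisely, let alone proved; the paper has no such standalone lemma and does not need one, because minimality of $P$ among connected induced subgraphs not of type branch or triangle does that work, and your version would in any case require the same case analysis you are deferring. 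So the approach matches the paper's, but as it stands the proposal is an outline of that proof with its substantive verifications missing, and hence has a genuine gap.
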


\begin{proof}
  Let $X = x \tp \cdots \tp x'$, $Y = y \tp \cdots \tp y'$, $Z = z \tp
  \cdots \tp z'$ be the three flat branches of $K$ denoted in such a
  way that $xyz$ and $x'y'z'$ are triangles.  Call $X, Y, Z$ and the
  two triangles of $K$ the \emph{pieces} of $K$.  Suppose that $C$ is
  not of type branch or triangle and consider an induced subgraph $P$
  of $C$ minimal with respect to the property of being a connected
  induced subgraph, not of type branch or triangle.

  \begin{claim}
    \label{c:prismlocal}
    $P$ is a path, no internal vertex of $P$ has neighbors in $K$, and
    $N_K(P)$ is not included in a branch or triangle of $K$.
  \end{claim}

  \begin{proofclaim}
    If $P$ is not a path, then either $P$ contains a cycle or $P$ is a
    tree with a vertex of degree at least three.  In either case, $P$
    has three distinct vertices $a_1, a_2, a_3$ such that $P\sm a_i$
    is connected for each $i=1, 2, 3$ (if $P$ has a cycle, take any
    three vertices of $Z$; if $P$ is a tree, take three leaves of
    $P$).  For each $i=1, 2, 3$, by the minimality of $P$, the
    attachment of $P\sm a_i$ over $K$ is included in a piece $X_i$ of
    $K$, and $a_i$ has a neighbor $y_i$ in $V(K)\sm X_i$.  So we have
    $\{y_1, y_2\}\subseteq X_3$, $\{y_1, y_3\}\subseteq X_2$, $\{y_2,
    y_3\}\subseteq X_3$.  But this is impossible because no three
    pieces $X_1, X_2, X_3$ of $K$ have that property.  Thus $P$ is a
    path.  If $P$ has length zero, then the claim holds since, by the
    assumption, $P$ is not of type branch or triangle.  So, we may
    assume that $P$ has length at least one.  Let $P$ have ends $p,
    p'$.  Suppose that the claim fails.  Then by the minimality of
    $P$, we have $N_K (P\sm p') \subset A$ and $N_K(P\sm p) \subset
    B$, where $A, B$ are distinct pieces of $K$; moreover, some
    interior vertex of $P$ must have a neighbor in $K$.  So the
    attachment of the interior of $P$ over $K$ is not empty and is
    included in $A \cap B$.  Since two distinct flat branches of $K$
    are disjoint and two distinct triangles of $K$ are disjoint, we
    may assume that $N_K(p) \subseteq \{x, y, z\}$, $N_K(p') \subseteq
    X$, and some interior vertex of $P$ is adjacent to $x$.  Note that
    $p$ has at most two neighbors in $\{x, y, z\}$, because $G$ has no
    $K_4$, and that $p$ must have at least one neighbor in $\{y, z\}$,
    for otherwise $P$ is of type branch.  If $py, pz \in E(G)$, then,
    since some interior vertex of $P$ is adjacent to $x$, $P$ contains
    a path that closes the diamond $\{x, y, z, p\}$, a contradiction.
    So we may assume up to symmetry that $pz \in E(G)$ and $py \notin
    E(G)$.  Vertex $p'$ has a neighbor in $X\setminus x$, for
    otherwise $P$ is of type triangle.  Let $w$ be the neighbor of
    $p'$ closest to $x'$ along $X$.  Then $z \tp p \tp P \tp p' \tp w
    \tp X \tp x'$, $z \tp Z \tp z'$ and $z \tp y \tp Y \tp y'$ form a
    triangular ISK4, a contradiction.
  \end{proofclaim}

 Let $p, p'$ be the two ends of $P$.  We distinguish between two
 cases.
  
 \noindent{\it Case 1: $P$ is a connection between two flat branches
   of $K$ and has no neighbor in the third flat branch.} We may assume
 that $p$ has a neighbor in $X$, $p'$ has a neighbor in in $Y$, and
 none of $p, p'$ has neighbors in $Z$.  Let $x^L$ (resp.~$x^R$) be the
 neighbor of $p$ closest to $x$ (resp.~to $x'$) along $X$.  Let $y^L$
 (resp.~$y^R$) be the neighbor of $p'$ closest to $y$ (resp.~to $y'$)
 along $Y$.  If both $x^Lx^R, y^Ly^R$ are edges, then $C$ is of type
 augmenting and the lemma holds.  So let us assume up to symmetry that
 $x^Lx^R \notin E(G)$.  Suppose that $x^L \neq x^R$.  We may assume
 $y^L \neq y'$ (else $y^R\neq y$ and the argument is similar).  Then
 $p \tp x^L \tp X \tp x$, $p \tp x^R \tp X \tp x' \tp z' \tp Z \tp z$,
 $p \tp P \tp p' \tp y^L \tp Y \tp y$ form a triangular ISK4, a
 contradiction.  So $x^L = x^R$.  If $y^Ly^R$ is an edge, then $X\cup
 Y\cup P$ is a triangular ISK4.  So $y^Ly^R \notin E(G)$, and
 consequently $y^L = y^R$ (just like we obtained $x^L = x^R$).
 Suppose that $x^L$ is not equal to $x$ or $x'$.  We may assume that
 $y^L \neq y'$ (else $y^R\neq y$ and the argument is similar).  Then
 $x^L \tp X \tp x$, $x^L \tp p \tp P \tp p' \tp y^L \tp Y \tp y$ and
 $x^L \tp X \tp x' \tp z' \tp Z \tp z$ form a triangular ISK4, a
 contradiction.  So $x^L$ is one of $x, x'$ , and, similarly, $y^L$ is
 one $y, y'$.  We may assume $x^L = x$ and $y^L = y'$, for otherwise
 (\ref{c:prismlocal}) is contradicted.  Then $x \tp X \tp x'$, $x \tp
 p \tp P \tp p' \tp y'$, $x \tp z \tp Z \tp z'$ form a triangular
 ISK4, a contradiction.

  \noindent{\it Case 2: We are not in Case~1.} Suppose first that one
  of $p, p'$ has at least two neighbors in a triangle of $K$.  Then we
  may assume up to symmetry that $px, py \in E(G)$, and $pz \notin
  E(G)$ because $G$ contains no $K_4$.  By~(\ref{c:prismlocal}) and up
  to symmetry, $p'$ must have a neighbor in $Y\setminus y$ or in $Z$.
  Note that either $p=p'$ or $N_K(p) = \{x, y\}$, for otherwise $p$
  would contradict the minimality of $P$.  If $p'$ has a neighbor in
  $Z$, then let $w$ be such a neighbor closest to $z$ along $Z$.  Then
  $p \tp P \tp p' \tp w \tp Z \tp z$ closes the diamond $\{p, x, y,
  z\}$, a contradiction.  So, $p'$ has no neighbor in $Z$, and so it
  has neighbors in $Y\setminus y$.  Let $w^L$ (resp.~$w^R$) be the
  neighbor of $p'$ closest to $y$ (resp.~to $y'$) along $Y$.  Note
  that $w^R \neq y$ by~(\ref{c:prismlocal}).  If $p'$ has no neighbor
  in $X$, then $x \tp X \tp x'$, $x \tp p \tp P \tp p' \tp w^R \tp Y
  \tp y'$ and $x \tp z \tp Z \tp z'$ form a a triangular ISK4.  So
  $p'$ has a neighbor in $X$, and we denote by $v^L$ (resp.~$v^R$)
  such a neighbor closest to $x$ (resp.~to $x'$) along $X$.  Since we
  are not in Case~1, we have $p \neq p'$.  If either $v^L\neq x'$ or
  $w^L\neq y'$, then $p'$ contradicts the minimality of $P$.  So
  assume $v^L = v^R = x'$ and $w^L = w^R = y'$.  If $X$ has length at
  least two, then $p \tp P \tp p' \tp x' \tp z' \tp Z \tp z$ closes
  the diamond $\{p, x, y, z\}$.  So $X$ has length one, and similarly
  $Y$ has length one.  But then $P$ is a link of the even
  square $\{x, y, x', y'\}$ of $K$, so $C$ is of type square.

  Now we assume that both $p, p'$ have at most one neighbor in a
  triangle of $K$.  At least one of $p, p'$ (say $p$) must have
  neighbors in more than one branch of $K$, for otherwise we are in
  Case~1.  So $p = p'$ by the minimality of $P$, and $p$ has neighbors
  in $X, Y, Z$, for otherwise we are again in Case~1.  We may assume
  that $py, pz \notin E(G)$.  Let $x^R$, $y^R$, $z^R$ be the neighbors
  of $p$ closest to $x', y', z'$ along $X, Y, Z$ respectively.  Then
  $p\tp x^R \tp X \tp x'$, $p \tp y^R \tp Y \tp y'$, $p \tp z^R \tp Z
  \tp z'$ form a triangular ISK4, a contradiction.
\end{proof}

\begin{lemma}
  \label{l:compsk4}
  Let $G$ be a graph that contains no triangular ISK4.  Let $H$ be a
  subdivision of $K_4$ such that $L(H)$ is an induced subgraph of $G$.
  Let $C$ be a connected induced subgraph of $G\setminus L(H)$.  Then
  $C$ is either of type branch, triangle, augmenting or square with
  respect to $L(H)$.
\end{lemma}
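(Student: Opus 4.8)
The plan is to reduce Lemma~\ref{l:compsk4} (the general $L(H)$ with $H$ a subdivision of $K_4$) to Lemma~\ref{l:compprism} (the prism case) by exhibiting a prism inside $L(H)$ that ``sees'' the subgraph $C$ in the right way. Recall that $H$ is a subdivision of $K_4$ with corners, say, $a,b,c,d$ and six branches; since $G$ is triangular-ISK4-free and $L(H)\subseteq G$, the only way $L(H)$ can fail to be triangular is to have enough structure, but in general $L(H)$ does contain triangles (one for each corner of $H$ of degree three). The key observation is that $L(H)$ is built from two prisms glued along branches: for instance, deleting one branch $P_{cd}$ of $H$ leaves a theta with branch-vertices the other corners, whose line graph is a prism $K$ that is an induced subgraph of $L(H)$, hence of $G$.

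First I would set up notation: write the branches of $H$ and note that the pieces of $L(H)$ (the flat branches coming from branches of $H$ of length $\ge 2$, the non-flat branches coming from length-one branches, and the triangles at the corners) partition $E(L(H))$ in a controlled way. Then I would take $C$ a connected induced subgraph of $G\setminus L(H)$ and look at its attachment $N_{L(H)}(C)$. If this attachment is contained in one of the two prisms $K$ obtained by deleting a branch of $H$, apply Lemma~\ref{l:compprism} to $C$ and $K$: $C$ is of type branch, triangle, augmenting, or square with respect to $K$. The next step, and the real content, is to check that each of these outcomes with respect to the sub-prism $K$ lifts to the same type with respect to the full $L(H)$ — a flat branch of $K$ is (contained in) a flat branch of $L(H)$, a triangle of $K$ is a triangle of $L(H)$, an augmenting path between two flat branches of $K$ with no neighbor in $L(H)\setminus(X\cup Y)$ still has no neighbor in $L(H)\setminus(X\cup Y)$ since we assumed the attachment lies in $K$, and similarly for even squares (using that the even squares of $K$ are even squares of $L(H)$). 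So in this case we are done.

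The main obstacle is the case where $N_{L(H)}(C)$ is not contained in any single sub-prism, i.e.\ $C$ has neighbors in (the interiors of) two ``opposite'' branches of $H$, say in the flat branches corresponding to $P_{ab}$ and $P_{cd}$. I would handle this by a minimality argument in the spirit of Claim~\ref{c:prismlocal}: take $P\subseteq C$ minimal connected not of type branch or triangle, argue $P$ is a path whose interior misses $L(H)$, and then show that the endpoints' attachments, sitting in distinct pieces not all inside one sub-prism, force a triangular ISK4 in $G$ unless $P$ is an augmenting path between $P_{ab}$ and $P_{cd}$ with no other attachment (type augmenting), or the relevant branches have length one and $P$ is a link of an even square of $L(H)$ (type square, using the characterization that even squares of $L(H)$ correspond to square subdivisions of $K_4$ and come from squares on the branch-vertices). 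Concretely, if $p$ has a neighbor in the interior of the $P_{ab}$-piece and $p'$ in the interior of the $P_{cd}$-piece, then combining $p \tp P \tp p'$ with suitable subpaths of the four branches $P_{ac},P_{ad},P_{bc},P_{bd}$ and the two ``long'' pieces produces a subdivision of $K_4$ containing a triangle at one of the corners $a,b,c,d$, contradicting triangular-ISK4-freeness — exactly the kind of explicit path-juggling done in Case~1 of Lemma~\ref{l:compprism}. I expect this last step to be somewhat case-heavy (depending on how close the attachment vertices are to the corners, and on which branches have length one), but each subcase is a routine triangular-ISK4 exhibition, so no genuinely new idea beyond Lemma~\ref{l:compprism} should be needed.
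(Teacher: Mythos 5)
Your overall strategy is the same as the paper's: delete one branch of $H$ to obtain a theta whose line graph is a prism $K\subseteq L(H)$, take a minimal connected $P\subseteq C$ not of type branch or triangle, apply Lemma~\ref{l:compprism}, and convert the outcome into a statement about $L(H)$ via triangular-ISK4 constructions. But there is a genuine gap in the step you treat as automatic. It is false that the four types with respect to the sub-prism $K$ ``lift'' to the same types with respect to $L(H)$, even when the attachment of $C$ lies entirely inside $K$. The flat branch of $K$ obtained from $P_{ad}\cup P_{bd}$ passes through the corner triangle $T_d$: its middle edge is $v_{da}v_{db}$, which lies in a triangle of $L(H)$, so it is not contained in a flat branch of $L(H)$. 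Hence a path that is augmenting for $K$ with $N_K(p)=v_{da}v_{db}$ is \emph{not} augmenting for $L(H)$; the correct conclusion there is either a triangular ISK4 (contradiction) or, when the four paths $P_{ad},P_{ac},P_{bd},P_{bc}$ all have length zero, that $C$ is of type \emph{square} with respect to the even square on the corner vertices. Similarly, an even square $S$ of $K$ containing $P_{ab}$ (when $P_{ab}$ has length one) is a \emph{non-even} square of $L(H)$ (three of its edges lie in triangles of $L(H)$), so type square with respect to $K$ does not transfer; one must show that this configuration forces a triangular ISK4 and that only the square on the four corner vertices can occur. These conversions are exactly the content of Cases~3 and~4 of the paper's proof and cannot be skipped.

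Two further points. First, the prism obtained by deleting the $H$-branch between $c$ and $d$ exists only if the opposite branch (between $a$ and $b$) has length at least two, i.e.\ $P_{ab}$ has length at least one in $L(H)$; when every branch of $H$ has length one, $L(H)$ is the octahedron and contains no prism at all, so this case needs a separate direct argument (the paper's claim~(\ref{c:octah})), which your sketch omits. Second, the case you identify as the ``main obstacle'' (attachment meeting both $P_{ab}$ and the omitted branch $P_{cd}$) is in fact most of the proof: even after Lemma~\ref{l:compprism} classifies $P$ with respect to $K$, one must analyse how $P$ attaches to $P_{cd}$ in each of the four outcomes, and in several subcases the type with respect to $L(H)$ changes (e.g.\ branch with respect to $K$ but augmenting with respect to $L(H)$). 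Saying that ``each subcase is a routine triangular-ISK4 exhibition'' is a statement of hope rather than a proof; as written, the argument is incomplete both in the part you claim is easy and in the part you acknowledge is hard.
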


\begin{figure}[h]
  \center
  \includegraphics{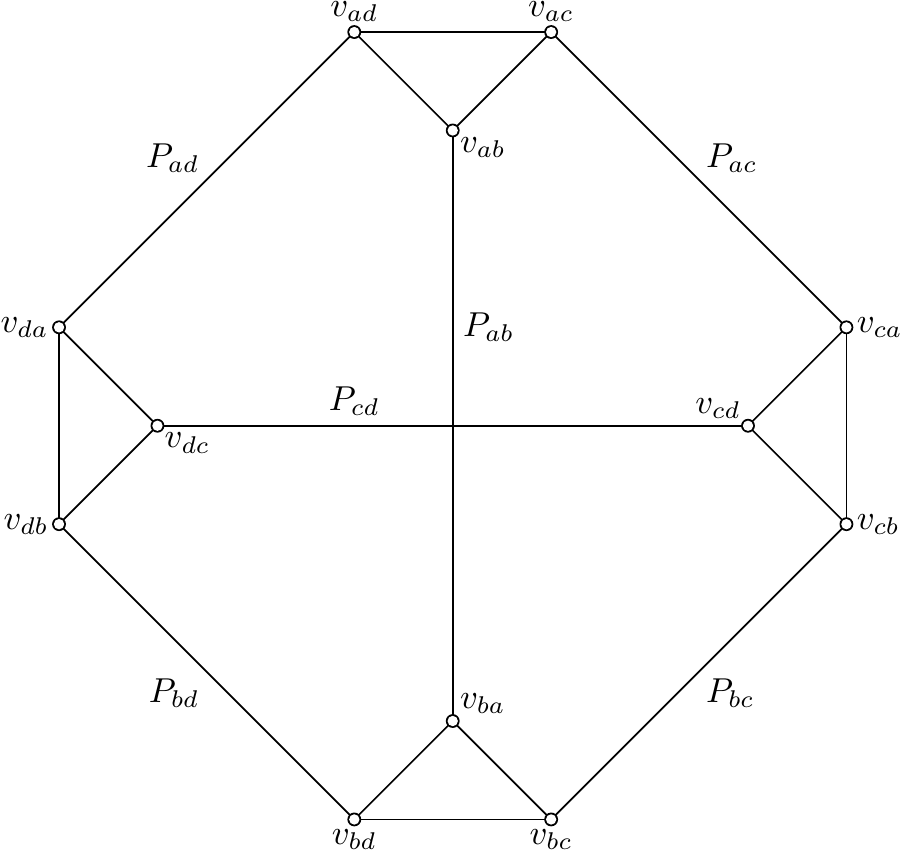}
  \caption{The line graph of a subdivision of $K_4$\label{fig:lsk4}}
\end{figure}

\begin{proof}
  Let $a, b, c, d$ be the four corners of $H$.  See
  Figure~\ref{fig:lsk4}.  The three edges incident to each vertex $x =
  a, b, c, d$ form a triangle in $L(H)$, which we label $T_x$.  In
  $L(H)$, for every pair $x, y \in \{a, b, c, d\}$ there is one path
  with an end in $T_x$ and an end in $T_y$, and no interior vertex in
  the triangles, and we denote this path by $P_{xy}$.  Note that
  $P_{xy}= P_{yx}$, and the six distinct paths so obtained are vertex
  disjoint.  Some of these paths may have length $0$.  In the triangle
  $T_x$, we denote by $v_{xy}$ the vertex that is the end of the path
  $P_{xy}$.  Thus the flat branches of $L(H)$ are the paths of length
  at least one among $P_{ab}$, $P_{ac}$, $P_{ad}$, $P_{bc}$, $P_{bd}$,
  $P_{cd}$.  Note that $L(H)$ may have as many as four triangles other
  than $T_a, T_b, T_c, T_d$.  The branch-vertices of $L(H)$ are
  $v_{ab}$, $v_{ac}$, $v_{ad}$, $v_{ba}$, $v_{bc}$, $v_{bd}$,
  $v_{ca}$, $v_{cb}$, $v_{cd}$, $v_{da}$, $v_{db}$ and $v_{dc}$.  The
  subgraph $L(H)$ has no other edges than those in the four triangles
  and those in the six paths.  Let every flat branch and every
  triangle of $L(H)$ be called a \emph{piece} of $L(H)$.

  Suppose that $C$ is not of type branch or triangle with respect to
  $L(H)$, and consider an induced subgraph $P$ of $C$ minimal with
  respect to the property of being a connected induced subgraph not of
  type branch or triangle.

  \begin{claim}
    \label{c:local}
    $P$ is a path, no internal vertex of $P$ has neighbors in $L(H)$
    and $N_{L(H)}(P)$ is not included in a flat branch or in a
    triangle of~$L(H)$.
  \end{claim}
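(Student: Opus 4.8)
The plan is to mimic, with the necessary adaptations, the proof of Claim~\ref{c:prismlocal} from Lemma~\ref{l:compprism}. First I would establish that $P$ is a path. Suppose not; then, exactly as in the prism case, $P$ contains three vertices $a_1,a_2,a_3$ (three vertices of a cycle, or three leaves of a tree) such that $P\sm a_i$ is connected for each $i$. By the minimality of $P$, the attachment of each $P\sm a_i$ over $L(H)$ lies in a single piece $X_i$ of $L(H)$, and $a_i$ has a neighbor in $V(L(H))\sm X_i$. The key combinatorial fact I need here is that no three pieces $X_1,X_2,X_3$ of $L(H)$ have the property that $a_j$ can simultaneously attach inside $X_i$ for all $i\neq j$ — equivalently, there are no three pieces pairwise forcing this kind of overlap. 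This requires a short case analysis over the piece-types of $L(H)$ (the twelve branch-vertices, the four corner-triangles $T_a,T_b,T_c,T_d$, the up-to-four extra triangles, and the six flat branches $P_{xy}$): a triangle meets at most three branches, and any two distinct flat branches are vertex-disjoint, so this is where I expect to spend the most effort. I'd resolve it by noting that if $a_1$ attaches in $X_2\cap X_3$, $a_2$ in $X_1\cap X_3$, $a_3$ in $X_1\cap X_2$, then each pairwise intersection $X_i\cap X_j$ is non-empty, forcing each pair of these pieces to share a vertex, hence each is a triangle meeting the others at distinct corners — but then the union $X_1\cup X_2\cup X_3\cup P$ together with a fourth corner and the remaining branches exhibits a triangular ISK4 (or already $L(H)$ itself plus $P$ does), contradicting the hypothesis that $G$ has no triangular ISK4.

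Once $P$ is a path, say with ends $p,p'$: if $P$ has length zero the claim is immediate from the assumption that $P$ is not of type branch or triangle, so assume $P$ has length at least one. Suppose the claim fails. By the minimality of $P$, the attachment $N_{L(H)}(P\sm p')$ lies in one piece $A$ and $N_{L(H)}(P\sm p)$ lies in another piece $B$, with $A\neq B$, and moreover some interior vertex of $P$ has a neighbor in $L(H)$, so the interior of $P$ attaches inside $A\cap B$. Since two distinct flat branches are vertex-disjoint and two distinct triangles are vertex-disjoint, $A\cap B\neq\emptyset$ forces (up to symmetry) that one of $A,B$ is a corner-triangle, say $A=T_x$ is a triangle with $x\in\{a,b,c,d\}$ and $B$ is a flat branch $P_{xy}$ incident to $v_{xy}\in T_x$, with some interior vertex of $P$ adjacent to $v_{xy}$. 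Now $p$ has at most two neighbors in $T_x$ (no $K_4$ in $G$), and $p$ must have a neighbor in $T_x\sm v_{xy}$ (else $P$ is of type branch). If $p$ is complete to the two other vertices of $T_x$, then an interior-vertex neighbor of $v_{xy}$ together with a subpath of $P$ closes the diamond on $\{p\}\cup T_x$, contradiction; so $p$ has exactly one neighbor other than $v_{xy}$ in $T_x$. Then $p'$ has a neighbor in $P_{xy}\sm v_{xy}$ (else $P$ is of type triangle), and taking $w$ the neighbor of $p'$ closest to $v_{yx}$ along $P_{xy}$, the path $p\tp P\tp p'\tp w$ extended along $P_{xy}$ to $v_{yx}$, together with the triangle $T_x$ and the three branches of $L(H)$ leaving $v_{yx}$'s triangle appropriately, forms a triangular ISK4 — concretely, using $T_x$ as the triangle and routing through $P_{xy},P_{xz},P_{xw}$ to the opposite triangle $T_y$ (where $\{z,w\}=\{a,b,c,d\}\sm\{x,y\}$) just as in the prism argument's final configuration.

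The main obstacle is the first step: unlike a prism, $L(H)$ can contain up to four extra triangles and its branch-vertices are numerous, so the assertion ``no three pieces have the bad overlap property'' needs a genuine, if routine, verification that covers all combinations of branch-vertices, corner-triangles and extra triangles as the $X_i$'s. I would handle it by the observation above that the bad configuration would force all three pieces to be pairwise-intersecting, hence all triangles (a branch cannot meet two others, and a flat branch is disjoint from every other branch and from every triangle except at its endpoints — which are branch-vertices, a degenerate single-vertex case handled by the minimality of $P$), and three pairwise-intersecting triangles sharing distinct vertices yield the forbidden triangular ISK4. The rest is a direct transcription of the prism proof with ``branch'' replaced by ``flat branch'' and ``triangle of $K$'' replaced by ``corner-triangle $T_x$ of $L(H)$''.
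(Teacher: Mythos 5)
Your first step (ruling out the non-path case) is fine in substance, and in fact takes a slightly different route from the paper: the paper observes that the three pairwise intersections force each $X_i$ to be a triangle and the three attachment vertices $y_1,y_2,y_3$ to form a triangle, so that the attachment of $P$ lies in a triangle and $P$ is of type triangle, contradicting its choice; you instead build a triangular ISK4 from $P$ together with $\{y_1,y_2,y_3\}$, which also works (though your side remarks that ``$L(H)$ itself plus $P$'' already contains one, and that the degenerate flat-branch case is ``handled by the minimality of $P$'', are not justified as stated). The genuine gaps are in the second half, where $P$ is a path. Your case reduction rests on the assertion that two distinct triangles of $L(H)$ are vertex-disjoint, and this is false: unlike in a prism, the corner triangles $T_x$ and $T_y$ share the vertex $v_{xy}=v_{yx}$ whenever $P_{xy}$ has length zero, and $L(H)$ may also contain up to four extra triangles, each sharing an edge with a corner triangle. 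Consequently you only treat the case $\{A,B\}=\{$triangle, flat branch$\}$ and silently discard the case where both pieces are triangles (in the paper: $A=T_d$, $B=T_a$ with $P_{ad}$ of length zero). That case needs its own argument; in particular it contains the delicate sub-case in which the connecting path $Q$ is forced through $v_{ab}$ with $P_{ab}$ and $P_{bd}$ of length zero, and the contradiction there is not an ISK4 at all but the conclusion that $N_{L(H)}(P)$ is the triangle $\{v_{da},v_{db},v_{ab}\}$, i.e.\ $P$ is of type triangle.

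Moreover, even in the case you do treat, your concluding construction does not work as described. You propose a triangular ISK4 ``using $T_x$ as the triangle'' whose branch from $v_{xz}$ runs through all of $P$ and then along $P_{xy}$ to $v_{yx}$. But by hypothesis some interior vertex of $P$ is adjacent to $v_{xy}\in A\cap B$, and $v_{xy}$ is a vertex of the triangle $T_x$ you want to keep; since the whole of $P$ must be used to join $p$ to $p'$, this adjacency is a chord of the alleged subdivision, so the subgraph is not induced (also, your branch and the branch $P_{xy}$ from $v_{xy}$ to $v_{yx}$ would overlap between $w$ and $v_{yx}$). This is exactly why the paper builds its ISK4 on a triangle away from $v_{da}$: it extracts an induced path $Q$ from $p$ to $v_{ca}$ inside $P\cup P_{ac}\cup B$, avoiding all neighbors of $P_{cd}\cup P_{bd}\cup P_{bc}$, and uses $Q$, $P_{cd}$, $P_{bd}$, $P_{bc}$ with triangle $T_c$ and fourth corner $v_{db}$, so that $v_{da}$ (the vertex the interior of $P$ attaches to) is not in the subdivision. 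To complete your plan you would need both to add the triangle--triangle case and to replace your final configuration by one that excludes $v_{xy}$.
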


  \begin{proofclaim}
    If $P$ is not a path, then, as in the proof of
    Claim~\ref{c:prismlocal} in Lemma~\ref{l:compprism}, $P$ has three
    distinct vertices $a_1, a_2, a_3$ such that $P\sm a_i$ is
    connected for each $i=1, 2, 3$.  For each $i=1, 2, 3$, by the
    minimality of $P$, the attachment of $P\sm a_i$ over $K$ is
    included in a piece $X_i$ of $K$, and $a_i$ has a neighbor $y_i$
    in $V(K)\sm X_i$.  So we have $\{y_1, y_2\}\subseteq X_3$, $\{y_1,
    y_3\}\subseteq X_2$, $\{y_2, y_3\}\subseteq X_3$.  This is
    possible in $L(H)$ only if each of $X_1, X_2, X_3$ is a triangle
    and $\{y_1, y_2, y_3\}$ is also a triangle.  But then the
    attachment of $P$ is $\{y_1, y_2, y_3\}$, so $P$ is of type
    triangle, a contradiction.  So $P$ is a path.  If $P$ has length
    zero, then the claim holds since, by the assumption, $P$ is not of
    type branch or triangle.  So, we may assume that $P$ has length at
    least one.  Let $P$ have ends $p, p'$.  Suppose that the claim
    fails.  Then by the minimality of $P$, $ N_{L(H)} (p) \subset A$
    and $N_{L(H)}(p') \subset B$, where $A, B$ are distinct pieces of
    $L(H)$.  Also some interior vertex of $P$ must have a neighbor in
    $L(H)$.  By the minimality of $P$, the attachment of the interior
    of $P$ over $L(H)$ is included in $A \cap B$.  Since two distinct
    flat branches of $L(H)$ are disjoint, we may assume that $A= T_d$
    and either $B = P_{ad}$ or $P_{ad}$ has length zero and $B = T_a$.
    In either case, $A\cap B = \{v_{da}\}$.  Note that $p$ has at most
    two neighbors in $T_d$, because $G$ has no $K_4$, and that $p$
    must have at least one neighbor in $\{v_{db}, v_{dc}\}$, for
    otherwise the attachment of $P$ is included in $B$ and $P$ is of
    type branch or triangle.  Note that $p'$ has neighbors in
    $B\setminus v_{da}$, for otherwise $P$ is of type triangle.  If
    $pv_{db}, pv_{dc} \in E(G)$, then since some interior vertex of
    $P$ is adjacent to $v_{da}$, $P$ contains a subpath that closes
    the diamond $T_d \cup \{ p\}$, a contradiction.  So, up to
    symmetry, we assume $pv_{db} \in E(G)$ and $pv_{dc} \notin E(G)$.
 
    We observe that $P \cup P_{ac} \cup B$ contains an induced path
    $Q$ from $p$ to $v_{ca}$, and no vertex of $Q$ has neighbors in
    $V(P_{cd}) \cup V(P_{bd}) \cup V(P_{bc})$.  If possible, choose
    $Q$ so that it does not contain $v_{ab}$.  Now $Q$, $P_{cd}$,
    $P_{bd}$, $P_{bc}$, form a triangular ISK4 (whose triangle is
    $T_c$ and fourth corner is $v_{db}$) except if $Q$ goes through
    $v_{ab}$ and $P_{ab}$ has length zero (so $v_{ab} = v_{ba}$).  In
    the latter situation, we must have $N_B(p') = \{v_{ab}\}$ by the
    choice of $Q$, so $B= T_a$ and $P_{ad}$ has length zero.  If
    $P_{bd}$ has length at least $1$, then $v_{db} \tp P \tp p' \tp
    v_{ba}$, $v_{db} \tp P_{bd} \tp v_{bd}$ and $v_{db} \tp v_{dc} \tp
    P_{cd} \tp v_{cd} \tp v_{cb} \tp P_{bc} \tp v_{bc}$ form a
    triangular ISK4.  So $P_{bd}$ has length zero.  But then
    $\{v_{da}, v_{db}, v_{ab}\}$ is a triangle and is the attachment
    of $P$ over $L(H)$, so $P$ is of type triangle with respect to
    $L(H)$, a contradiction.
  \end{proofclaim}

  \begin{claim}
    \label{c:octah}
    One of $P_{ab}$, $P_{ac}$, $P_{ad}$, $P_{bc}$, $P_{bd}$, $P_{cd}$
    has length at least $1$. 
  \end{claim}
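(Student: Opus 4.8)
The plan is to prove the claim by contradiction. Suppose that all six of $P_{ab}, P_{ac}, P_{ad}, P_{bc}, P_{bd}, P_{cd}$ have length $0$. Then the twelve branch-vertices $v_{xy}$ of $L(H)$ coincide in pairs ($v_{xy}=v_{yx}$), and $L(H)$ is exactly the line graph of $K_4$, that is, the octahedron $Q=K_{2,2,2}$. Since $Q$ is $4$-regular it has no flat branch, so neither ``type branch'' nor ``type augmenting'' is available here, and the aim is to show that $C$ is forced to be of type square --- one of the admissible outcomes of Lemma~\ref{l:compsk4} --- so that for the rest of the proof we may assume that some $P_{xy}$ has positive length.

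First I would record what $Q$ looks like: every edge of $Q$ lies in exactly two triangles, $\alpha(Q)=2$, and the triangle-free subsets of $V(Q)$ that are \emph{not} contained in any triangle of $Q$ are precisely the non-adjacent (``antipodal'') pairs, the induced $P_3$'s whose two ends are antipodal, and the three even squares of $Q$; moreover for any even square $S$ of $Q$ the two vertices of $Q\sm S$ form an antipodal pair, each of which is a short link of $S$. Then I would feed in Claim~\ref{c:local}: $P=p\tp\cdots\tp p'$ is an induced path whose interior is anticomplete to $Q$, so $N_Q(P)=N_Q(p)\cup N_Q(p')$, and this union is not contained in any triangle of $Q$. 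As $G$ has no $K_4$, neither $N_Q(p)$ nor $N_Q(p')$ contains a triangle, hence each of them is $\emptyset$, a vertex, an edge, an antipodal pair, an induced $P_3$, or an even square; and if $p\neq p'$ then the minimality of $P$ makes each of $P\sm p$ and $P\sm p'$ of type branch or triangle, so --- no flat branch being available --- $N_Q(p)$ and $N_Q(p')$ are each contained in a triangle of $Q$, i.e.\ are $\emptyset$, a vertex, or an edge.

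The core is then a short case analysis on the pair $(N_Q(p),N_Q(p'))$, with the symmetries of the octahedron reducing it to a handful of cases. If $N_Q(P)$ fills an entire even square $S$ of $Q$ (which, under the constraints above, happens only when $p=p'$ is complete to $S$, or $p\neq p'$ and $N_Q(p),N_Q(p')$ are the two opposite edges of $S$), then $P$ is a link of $S$ with no neighbor in $Q\sm S$, so $C$ is of type square and we are done. In every other surviving configuration I would build a triangular subdivision of $K_4$ induced in $G$: take a triangle $\{q,w_1,w_2\}$ where $q$ is an end of $P$ and $w_1,w_2\in N_Q(q)$ with $w_1\sim w_2$; take a fourth corner $u$ that is a common neighbour of $w_1,w_2$ in $Q$ and is reachable from the other end of $P$ either directly or through one further vertex of $Q$; let the branch joining $u$ to $q$ run through the rest of $P$; and check that the resulting configuration is induced --- here one uses that the interior of $P$ is anticomplete to $Q$, that $N_Q(p),N_Q(p')$ are as restricted above (so that the internal vertices of that branch avoid $w_1$ and $w_2$), and that one should simply omit the two vertices of $Q$ (an antipodal pair) that are not used, which makes the subdivision of $K_4$ genuinely induced while it still contains the triangle $\{q,w_1,w_2\}$. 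This contradicts the hypothesis that $G$ has no triangular ISK4, and running through the cases shows that the only escape is the ``link of a square'' situation above, which proves the claim.

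The main difficulty I foresee is organisational rather than conceptual: setting up the case split on $(N_Q(p),N_Q(p'))$ so that the octahedron's symmetries are exploited uniformly, and, in each case that should yield a contradiction, choosing the triangle $\{q,w_1,w_2\}$ and the fourth corner $u$ so that the branch from $u$ to $q$ can be routed through $P$ (and at most one extra vertex of $Q$) without creating a chord --- in particular making sure the two ``unused'' vertices of $Q$, which will be adjacent to several of the four corners, are correctly left out. The delicate point is to pin down exactly which configurations of $N_Q(P)$ escape a triangular ISK4, and to recognise that they are precisely those in which $N_Q(P)$ is an even square and $P$ is a link of it.
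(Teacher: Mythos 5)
Your overall frame is the same as the paper's: assume all six paths have length zero, so that $L(H)$ is the octahedron $Q=K_{2,2,2}$ with no flat branch; use~(\ref{c:local}), the minimality of $P$ and the absence of $K_4$ to restrict $N_Q(p)$ and $N_Q(p')$; and show that the only configurations that escape a contradiction are those in which $P$ is a link of an even square, so that $C$ is of type square and the lemma holds directly. The gap lies in your contradiction step. Your template requires the triangle of the triangular ISK4 to be $\{q,w_1,w_2\}$ with $q$ an end of $P$ and $w_1,w_2$ adjacent neighbours of $q$, and the fourth corner to be reached from the other end of $P$ directly or through one further vertex of $Q$. In most of the residual configurations this cannot work. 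If $p=p'$ and $N_Q(p)$ is an antipodal pair $\{x,x'\}$, there is no edge inside $N_Q(p)$ at all, so no such triangle exists. If, say, $N_Q(p)=\{x,z\}$ and $N_Q(p')=\{x,z'\}$ (two edges sharing $x$, whose union contains the antipodal pair $z,z'$ and hence survives), then any triangle anchored at an end of $P$ is ruined: the other end of $P$ is adjacent to the corner $x$, and any ``further vertex of $Q$'' used to reach the fourth corner is adjacent to every vertex of $Q$ except its antipode, hence to some corner, so a chord is unavoidable. (Your parenthetical claim that the two unused vertices of $Q$ form an antipodal pair also fails precisely in these cases.)

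The missing idea is the paper's ``closing a diamond'' device, in which the triangle of the ISK4 lies inside the octahedron and $P$ plays the role of the subdivided edge. After using $K_4$-freeness and minimality to reduce to the situation where $p$ meets $x$, $p'$ meets $x'$, and each of $p,p'$ has at most one neighbour in the complementary square $S$ (the case where an end sees both vertices of an antipodal pair of $S$ being absorbed, via minimality, into the type-square outcome), one argues: either every edge of $S$ has a neighbour of $p$ or $p'$, which pins $N_Q(P)$ down to an even square with $P$ a link of it, or some edge $uv$ of $S$ sees neither end, and then $\{u,v,x,x'\}\cup V(P)$ induces a closed diamond --- a $K_4$ on $u,v,x,x'$ with the missing edge $xx'$ replaced by the path $x\tp p\tp\cdots\tp p'\tp x'$ --- which is a triangular ISK4 with triangle $uvx$, contradicting the hypothesis of Lemma~\ref{l:compsk4}. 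Without this construction (or an equivalent one not anchored at an end of $P$), your case analysis cannot be completed.
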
 
  
  \begin{proofclaim}
   Suppose that $P_{ab}$, $P_{ac}$, $P_{ad}$, $P_{bc}$, $P_{bd}$,
   $P_{cd}$ all have length zero.  Then $L(H)$ is the octahedron
   ($K_{2, 2, 2}$).  Note that $L(H)$ has no flat branch.  For
   convenience, we rename its vertices $x$, $x'$, $y$, $y'$, $z$, $z'$
   so that $xx', yy', zz' \notin E(L(H))$ and all other pairs of
   distinct vertices are edges.  If $P$ has at most one neighbor in
   every pair $\{x, x'\}$, $\{y, y'\}$, $\{z, z'\}$, then
   $N_{L(H)}(P)$ is a subset of a triangle, a contradiction.  So, we
   may assume up to symmetry that $p$ is adjacent to $x$ and $p'$ to
   $x'$.  Let $S$ be the square of $L(H)$ induced by $y, y', z, z'$.
   Vertex $p$ cannot be adjacent to the two vertices of an edge of
   $S$, for that would yield (with $x$) a $K_4$ in $G$.  So we may
   assume $py, py' \notin E(G)$.  If $pz, pz'$ are both in $E(G)$,
   then $p$ itself is a vertex not of type branch or triangle, so
   $p=p'$ by the minimality of $P$, and since $S' = \{x, x', z, z'\}$
   is an even square of $L(H)$ and $N_{L(H)}(P) = S'$, $C$ is of type
   square.  Hence we may assume up to symmetry that $pz'\notin E(G)$,
   so $p$ has at most one neighbor in $S$.  Similarly, $p'$ has at
   most one neighbor in $S$.  If any edge $uv$ of $S$ has no neighbor
   of $p$ or $p'$, then $P$ closes the diamond induced by $\{u, v, x,
   x'\}$, a contradiction.  So every edge of $S$ has a neighbor of $p$
   or $p'$, which implies $pz\in E(G)$ and $p'z'\in E(G)$.  Then $P$
   is a link of the square $\{x, z, x', z'\}$ of $L(H)$, so $C$
   is of type square.
  \end{proofclaim}

  By (\ref{c:octah}) we may assume up to symmetry that $P_{ab}$ has
  length at least one.  So the vertices of $P_{ad}, P_{bd}, P_{ab},
  P_{ac}, P_{bc}$ induce a prism $K$ in $G$, whose triangles are $T_a,
  T_b$ and whose flat branches are $P_{ab}, P_{ac}\cup P_{bc}$ and
  $P_{ad}\cup P_{bd}$.  We apply Lemma~\ref{l:compprism} to $K$ and
  $P$, which leads to the following four cases.
  
  \medskip

  \noindent{\it Case 1: $P$ is of type branch with respect to $K$.}
  Suppose first that $N_K(P) \subseteq V(P_{ab})$.
  By~(\ref{c:local}), $P$ has neighbors in $P_{cd}$, and we may assume
  that $p$ has a neighbor in $P_{ab}$, $p'$ has a neighbor in
  $P_{cd}$, and no proper subpath of $P$ has this property.  Let $v^L$
  (resp.~$v^R$) be the neighbor of $p$ closest to $v_{ab}$ (resp.~to
  $v_{ba}$) along $P_{ab}$.  Up to the symmetry between $P_{ab}$ and
  $P_{cd}$ we may assume $v^Lv^R \notin E(G)$, for otherwise $C$ is of
  type augmenting with respect to $L(H)$ and the lemma holds.  Let
  $w^R$ the neighbor of $p'$ closest to $v_{cd}$ along $P_{cd}$.  If
  $v^L = v^R$, then $v^L \tp P_{ab} \tp v_{ab} \tp v_{ac} \tp P_{ac}
  \tp v_{ca}$, $v^L \tp P_{ab} \tp v_{ba} \tp v_{bc} \tp P_{bc} \tp
  v_{cb}$, $v^L \tp p \tp P \tp p' \tp w^R \tp P_{cd} \tp v_{cd}$ form
  a triangular ISK4, a contradiction.  If $v^L \neq v^R$, then $p \tp
  v^L \tp P_{ab} \tp v_{ab} \tp v_{ac} \tp P_{ac} \tp v_{ca}$, $p \tp
  v^R \tp P_{ab} \tp v_{ba} \tp v_{bc} \tp P_{bc} \tp v_{cb}$, $p \tp
  P \tp p' \tp w^R \tp P_{cd} \tp v_{cd}$ form a triangular ISK4, a
  contradiction.

  Now we may assume up to symmetry that $N_{K}(P) \subseteq V(P_{ad})
  \cup V(P_{bd})$.  Suppose that $P$ has a neighbor in each of
  $P_{ad}$, $P_{bd}$ and $P_{cd}$.  Let $v^a, v^b, v^c$ be the
  neighbors of $P$ closest to $v_{da}$, $v_{db}$ and $v_{dc}$
  respectively along these paths.  Then $V(P) \cup V(v^a \tp P_{ad}
  \tp v_{da}) \cup V(v^b \tp P_{bd} \tp v_{db}) \cup V(v^c \tp P_{cd}
  \tp v_{dc})$ induces a triangular ISK4 (whose corners are $v_{da},
  v_{db}, v_{dc}$ and one of $p,p'$), a contradiction.  So, $P$ has no
  neighbor in at least one of $P_{ad}$, $P_{bd}$, $P_{cd}$.

  If $P$ has no neighbor in $P_{bd}$, then by~(\ref{c:local}), we may
  assume that $p$ has a neighbor in $P_{ad}$, $p'$ has a neighbor in
  $P_{cd}$, and no proper subpath of $P$ has such a property.  Let
  $v^R$ be the neighbor of $p$ closest to $v_{ad}$ along $P_{ad}$.
  Suppose that $p'$ has a unique neighbor $w$ in $P_{cd}$.  If $v^R =
  v_{da}$, then $w \neq v_{dc}$ by~(\ref{c:local}) and $w \tp P_{cd}
  \tp v_{dc}$, $w \tp p' \tp P \tp p \tp v_{da}$, $w \tp P_{cd} \tp
  v_{cd} \tp v_{cb} \tp P_{bc} \tp v_{bc} \tp v_{bd} \tp P_{bd} \tp
  v_{db}$ form a triangular ISK4.  If $v^R \neq v_{da}$, then $w \tp
  p' \tp P \tp p \tp v^R \tp P_{ad} \tp v_{ad}$, $w \tp P_{cd} \tp
  v_{cd} \tp v_{ca} \tp P_{ac} \tp v_{ac}$, $w \tp P_{cd} \tp v_{dc}
  \tp v_{db} \tp P_{bd} \tp v_{bd} \tp v_{ba} \tp P_{ab} \tp v_{ab}$
  form a triangular ISK4, a contradiction.  So $p'$ has at least two
  neighbors on $P_{cd}$, and in particular $P_{cd}$ has length at
  least one.  So $P_{cd}, P_{ad}, P_{ac}, P_{bd}, P_{cb}$ form a prism
  $K'$.  Let us apply Lemma~\ref{l:compprism} to $K'$ and $P$.  Since
  $P$ has at least two neighbors in the flat branch $P_{cd}$ of $K'$
  and at least one neighbor in $P_{ad}$, $P$ is not of type branch or
  triangle with respect to $K'$.  Also $P$ is not of type square with
  respect to $K'$, because $N_{K'}(P)$ is included in $V(P_{ad}) \cup
  V(P_{cd})$ and cannot induce an even square of $K'$.  So $P$ is of
  type augmenting with respect to $K'$.  So $N_{K'}(p)$ is an edge of
  $P_{ad}$ (and this implies that $P_{ad}$ is a flat branch of
  $L(H)$), $N_{K'}(p')$ is an edge of $P_{cd}$, hence $P$ is of type
  augmenting with respect to $L(H)$.   
  
  If $P$ has no neighbor in $P_{ad}$, the situation is similar to the
  preceding paragraph (by symmetry).
  
  Now suppose that $P$ has no neighbor in $P_{cd}$.
  By~(\ref{c:local}), we may assume that $p$ has a neighbor in
  $P_{ad}$, $p'$ has a neighbor in $P_{bd}$, and no proper subpath of
  $P$ has this property.  Let $v^R$ (resp.~$v^L$) be the neighbor of
  $p$ closest to $v_{ad}$ (resp.~to $v_{da}$) along $P_{ad}$.  Let
  $w^R$ (resp.~$w^L$) be the neighbor of $p'$ closest to $v_{db}$
  (resp.~to $v_{bd}$) along $P_{bd}$.  If both $v^Lv^R, w^Lw^R$ are
  edges, then $C$ is of type augmenting with respect to $L(H)$ and the
  lemma holds.  So let us assume, up to the symmetry between $P_{ad}$
  and $P_{bd}$, that $v^Lv^R$ is not an edge.  If $v^L \neq v^R$, then
  $p \tp v^L \tp P_{ad} \tp v_{da}$, $p \tp v^R \tp P_{ad} \tp v_{ad}
  \tp v_{ac} \tp P_{ac} \tp v_{ca} \tp v_{cd} \tp P_{cd} \tp v_{dc}$
  and $p \tp P \tp p' \tp w \tp P_{bd} \tp v_{db}$ form a triangular
  ISK4, a contradiction.  So $v^L = v^R$.  If $w^Rw^L$ is an edge,
  then $P_{ab}\cup P_{ad}\cup P_{bd}\cup P$ is a triangular ISK4, a
  contradiction. So $w^Rw^L$ is not an edge, and, as above, this
  implies that $w^R=w^L$.  We cannot have $\{v^L, w^L\}= \{v_{da},
  v_{db}\}$, for otherwise $N_{L(H)}(P) \subseteq T_d$, contradictory
  to~(\ref{c:local}).  So we may assume that $v ^L\neq v_{da}$.  Then
  $v^L \tp P_{ad} \tp v_{da}$, $v^L \tp P_{ad} \tp v_{ad} \tp v_{ac}
  \tp P_{ac} \tp v_{ca} \tp v_{cd} \tp P_{cd} \tp v_{dc}$ and $v^L \tp
  p \tp P \tp p' \tp w \tp P_{bd} \tp v_{db}$ form a triangular ISK4,
  a contradiction.

\medskip

  \noindent{\it Case 2: $P$ is of type triangle with respect to $K$.}
  We assume up to symmetry that $N_K(P) \subseteq T_a$.
  By~(\ref{c:local}) and up to symmetry, we may assume that $p$ has a
  neighbor in $T_a$, $p'$ has a neighbor in $P_{cd}$, and no interior
  vertex of $P$ has a neighbor in $L(H)$.  We may assume that we are
  not in Case~1, so $p$ has at least two neighbors in $T_a$; and $p$
  has only two neighbors in $T_a$, for otherwise there is a $K_4$ in
  $G$.  Suppose that $pv_{ac}, pv_{ad}\in E(G)$ and $pv_{ab} \notin
  E(G)$.  If $p'$ has only one neighbor in $P_{cd}$, then $P_{ac}\cup
  P_{ad}\cup P_{cd}\cup P$ is a triangular ISK4, a contradiction.  So
  $p'$ has at least two neighbors in $P_{cd}$, which implies that
  $P_{cd}$ has length at least one, and we may assume up to symmetry
  that the neighbor $w$ of $p'$ closest to $d$ on $P_{cd}$ is
  different from $c$.  Then $v_{ac} \tp P_{ac} \tp v_{ca} \tp v_{cb}
  \tp P_{cb} \tp v_{bc}$, $v_{ac} \tp v_{ab} \tp P_{ab} \tp v_{ba}$
  and $v_{ac} \tp p\tp P\tp p' \tp w \tp P_{dc} \tp v_{dc} \tp v_{db}
  \tp P_{db} \tp v_{bd}$ form a triangular ISK4 (whose corners are the
  vertices of $T_b$ and $v_{ac}$), a contradiction.  So $pv_{ab} \in
  E(G)$ and we may assume up to symmetry $pv_{ad} \notin E(G)$.  Then
  $v_{ab} \tp p \tp P \tp p' \tp w \tp P_{cd} \tp v_{dc}$, $ v_{ab}
  \tp v_{ad} \tp P_{ad} \tp v_{da}$ and $ v_{ab} \tp P_{ab} \tp v_{ba}
  \tp v_{bd}\tp P_{bd} \tp v_{db}$ form an triangular ISK4, a
  contradiction.
  
  \medskip

  \noindent{\it Case 3: $P$ is of type augmenting with respect to
  $K$.}  We may assume up to symmetry that $N_K(p)$
  is an edge $e$ in $P_{ad} \cup P_{bd}$ and $N_K(p')$ is an edge $e'$
  in either $P_{ab}$ or in $Q = P_{ac} \cup P_{bc}$.  If $e'$ is in
  $P_{ab}$, let $v^R$ be its vertex closest to $v_{ba}$.  If $e'$ is
  in $Q$ let $v^R$ be its vertex closest to $v_{bc}$.  Let $u^R$ be
  the other vertex of $e'$.

  Suppose that $e = v_{da}v_{db}$.  So $T_d\cup\{p\}$ induces a
  diamond.  Then $P$ has no neighbor in $P_{cd}$, for otherwise $P
  \cup P_{cd}$ would contain a path that closes the diamond
  $T_d\cup\{p\}$.  If $e'$ is in $P_{ab}$, then $v_{da} \tp p \tp P
  \tp p' \tp v^R \tp P_{ab} \tp v_{ba} \tp v_{bc} \tp P_{bc} \tp
  v_{cb}$, $v_{da} \tp v_{dc} \tp P_{cd} \tp v_{cd}$ and $v_{da} \tp
  P_{ad} \tp v_{ad} \tp v_{ac} \tp P_{ac} \tp v_{ca}$ form a
  triangular ISK4, a contradiction (note that this holds even when $P$
  and every $P_{xy}$ except $P_{ab}$ has length zero).  Hence $e'$ is
  in $Q$.  If $v^R$ is in $P_{ac}$, then $P_{ac}$ has length at least
  one and $v^R \neq v_{ac}$, so $p \tp P \tp p' \tp v^R \tp P_{ac} \tp
  v_{ca} \tp v_{cd} \tp P_{cd} \tp v_{dc}$ closes the diamond $\{p,
  v_{da}, v_{db}, v_{dc}\}$.  So $v^R$ is not in $P_{ac}$; and, by
  symmetry, $u^R$ is not in $P_{bc}$, so we must have $e' =
  v_{ca}v_{cb}$.  If one of $P_{ac}, P_{ad}$ has length at least one,
  then $p \tp P \tp p' \tp v_{ca} \tp v_{cd} \tp P_{cd} \tp v_{dc}$
  closes the diamond $T_d\cup\{p\}$, a contradiction.  So suppose that
  both $P_{ad}$, $P_{ac}$ have length zero, and similarly both
  $P_{bd}$, $P_{bc}$ have length zero.  Then $P$ is a link of
  the even square induced by the four vertices $v_{da} = v_{ad}$,
  $v_{ac} = v_{ca}$, $v_{cb} = v_{bc}$ and $v_{bd} = v_{db}$ of
  $L(H)$, hence, $C$ is of type square with respect to $L(H)$.

  Now we may assume that $e \neq v_{da} v_{db}$, and, similarly, that
  $e' \neq v_{ca} v_{cb}$.  We may assume up to symmetry that $e$ is
  in $P_{ad}$.  We know that $e'$ is in either $P_{ab}$, $P_{ac}$ or
  $P_{bc}$, and that no vertex of $P$ has a neighbor in $P_{bd}$.  Let
  $e=u^Lv^L$ so that the vertices $v_{ad}, u^L, v^L, v_{da}$ lie in
  this order on $P_{ad}$.  Suppose that some vertex of $P_{cd}$ has a
  neighbor in $P$ and call $w$ such a vertex closest to $v_{dc}$.
  Note that $w$ must be adjacent to $x \in \{p, p'\}$, so $x$ itself
  is a connected induced subgraph of $G$, not of type branch or
  triangle with respect to $L(H)$.  This and the minimality of $P$
  imply $x=p=p'$.  Put $Q_1 = p \tp v^L \tp P_{ad} \tp v_{da}$, $Q_2 =
  p \tp w \tp P_{cd} \tp v_{dc}$.  If $e'$ is in $P_{ab}$, put $Q_3 =
  p \tp v^R \tp P_{ab} \tp v_{ba} \tp v_{bd} \tp P_{bd} \tp v_{db}$.
  If $e'$ is in $Q$, put $Q_3 = p \tp v^R \tp Q \tp v_{bc} \tp v_{bd}
  \tp P_{bd} \tp v_{db}$.  Now, if $w$ has no neighbor in $Q_3$, then
  $Q_1, Q_2, Q_3$ form a triangular ISK4, a contradiction.  So $w$ has
  a neighbor in $Q_3$, which means that $w=v_{cd}$ and $v^R \in
  P_{ac}$.  Then $p \tp v_{cd} \tp v_{cb} \tp P_{bc} \tp v_{bc} \tp
  v_{ba} \tp P_{ab} \tp v_{ab}$, $p \tp u^R \tp P_{ac} \tp v_{ac}$ and
  $p \tp u^L \tp P_{ad} \tp v_{ad}$ form a triangular ISK4, a
  contradiction.  So no vertex of $P$ has a neighbor in $P_{cd}$.  It
  follows that $C$ is of type augmenting with respect to $L(H)$.
  
  \medskip

  \noindent{\it Case 4: $P$ is of type square with respect to $K$.}
  So $P$ is a link of an even square $S$ of $K$ and has no neighbor in
  $K\setminus S$.  We may assume up to symmetry that $S$ contains
  $P_{ad}$ and $P_{bd}$, so these two paths have length zero, that is,
  $v_{ad}= v_{da}$ and $v_{bd}= v_{db}$.  If any vertex of $P$ has a
  neighbor $w$ in $P_{cd}$, then $p=p'$ by the minimality of $P$.  So
  $p$ is adjacent to both $v_{ad}, v_{bd}$.  Then $T_d\cup\{p\}$
  induces either a $K_4$ (if $w = v_{dc}$) or a diamond that is closed
  by a subpath of $P_{cd}\cup\{p\}$, a contradiction.  Hence, no
  vertex of $P$ has a neighbor in $P_{cd}$.  Suppose that $P_{ab}
  \subset S$.  Note that $S$ is an even square of $K$, but a non-even
  square of $L(H)$.  Then $V(P) \cup \{v_{da}, v_{ba}\}$ contains an
  induced path $Q$ from $v_{da}$ to $v_{ba}$ such that no interior
  vertex of $Q$ has a neighbor in $(L(H)\setminus S) \cup \{v_{da},
  v_{ba}\}$.  Then $v_{da} \tp Q \tp v_{ba} \tp v_{bc} \tp P_{bc} \tp
  v_{cb}$, $v_{da} \tp v_{ac} \tp P_{ac} \tp v_{ca}$ and $v_{da} \tp
  v_{dc} \tp P_{cd} \tp v_{cd}$ form a triangular subdivision of
  $K_4$, a contradiction.  So $P_{ab} \not\subset S$.  So $S$ has
  vertices $v_{ad} = v_{da}$, $v_{db} = v_{bd}$, $v_{bc} = v_{cb}$ and
  $v_{ac} = v_{ca}$, and $S$ is an even square of $L(H)$.  Thus $C$ is
  of type square with respect to $L(H)$ because of $S$ and $P$.
\end{proof}

Let us say that a graph is \emph{substantial} if it is cyclically
$3$-connected and not a square theta or a square subdivision of $K_4$.
The following lemma shows that type square arises only with
line graphs of non substantial graphs.

\begin{lemma}
  \label{l:compsubstan}
  Let $G$ be a graph that contains no triangular ISK4.  Let $H$ be a
  substantial graph such that $L(H)$ is an induced subgraph of $G$.
  Let $C$ be a component of $G\setminus L(H)$.  Then $C$ is either of
  type branch, triangle or augmenting with respect to $L(H)$.
\end{lemma}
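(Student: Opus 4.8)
The plan is to reduce Lemma~\ref{l:compsubstan} to Lemma~\ref{l:compsk4} (for subdivisions of $K_4$) and Lemma~\ref{l:compprism} (for prisms), using the structural characterization of substantial graphs. First I would recall from Lemma~\ref{l:descc3c} that a cyclically $3$-connected graph $H$ is either a theta or a subdivision of a $3$-connected graph. If $H$ is a theta, then $L(H)$ is a prism, and since $H$ is substantial it is not a square theta, so $L(H)$ is a prism with at most one flat branch of length one; in any case Lemma~\ref{l:compprism} tells us $C$ is of type branch, triangle, augmenting or square with respect to the prism $L(H)$, and I must rule out type square. If $H$ is a subdivision of a $3$-connected graph other than a subdivision of $K_4$, then $L(H)$ contains no even square at all (an even square in $L(H)$ would come from a square on the branch-vertices of $H$, and, being inside a $3$-connected graph, such a square would force a subdivision of $K_4$ in $H$ by Lemma~\ref{l:cycleedgek4}, hence $H$ itself is a subdivision of $K_4$ — contradiction); so type square is vacuously impossible and we would need a version of the decomposition for general substantial line graphs.

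The cleaner route, which I expect the authors take, is: since $H$ is substantial, $H$ contains a subdivision of $K_4$ (as $H$ is $2$-connected and not a cycle, any two branch-vertices joined by three internally disjoint paths give, via Lemma~\ref{l:thetask4}, either a theta or an ISK4 in $H$; iterate/combine with $3$-connectivity). Actually the key observation is simpler: $H$ contains an induced (in $H$) subdivision $H_0$ of $K_4$ such that $L(H_0)$ is an induced subgraph of $L(H)$, and one can choose $H_0$ so that every flat branch of $L(H_0)$ extends to a flat branch of $L(H)$ and every triangle of $L(H_0)$ is a triangle of $L(H)$. Then I would apply Lemma~\ref{l:compsk4} to $H_0$: $C$ is of type branch, triangle, augmenting or square with respect to $L(H_0)$. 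In the first three cases the attachment of $C$ over $L(H_0)$ sits in a flat branch, a triangle, or yields an augmenting path of $L(H_0)$; I then need to check that the same conclusion propagates to all of $L(H)$, i.e.\ that $C$ has no neighbor in $L(H)\setminus L(H_0)$ outside the relevant piece, using the minimality of $C$'s attachment pattern and the no-triangular-ISK4 hypothesis exactly as in the proof of Lemma~\ref{l:compsk4}. The remaining case — $C$ is of type square with respect to $L(H_0)$ — is where substantiality does the work: an even square $S$ of $L(H_0)$ on which $C$ has a link arises from a square on four branch-vertices $a,b,c,d$ of $H$; since $H$ is $3$-connected (or a theta) and substantial, this square is not a ``free'' square, and one produces either a triangular ISK4 in $G$ (contradiction) or shows $H$ is a square theta or square subdivision of $K_4$ (contradicting substantiality).

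More concretely, for the type-square case I would argue as follows. If $S$ is an even square of $L(H)$, its four vertices are edges $ax$, $bx$ for some branch-vertices, more precisely they form a $4$-cycle $v_1v_2v_3v_4$ in $L(H)$ coming from a square $C^*$ on branch-vertices of $H$. Since $H$ is cyclically $3$-connected and not a theta, $H$ is a subdivision of a $3$-connected graph $H'$; the square $C^*$ lives on four vertices of $H'$, and because $H' $ is $3$-connected with at least four vertices, either $H' = K_4$ and $H$ is a square subdivision of $K_4$ (excluded), or $H'$ has a fifth vertex, in which case by $3$-connectivity there are three paths from that vertex (or the rest of $H'$) to $C^*$ avoiding one vertex, and combined with $C^*$ and the link $P\subseteq C$ this yields a triangular ISK4 in $G$ — contradicting the hypothesis. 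If $H$ is a theta, then $C^*$ being a square in $H$ forces two branches of $H$ to have length two, so $H$ is a square theta, again excluded. Hence type square cannot occur, and $C$ is of type branch, triangle, or augmenting with respect to $L(H)$, which is exactly the statement.

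The main obstacle I anticipate is the bookkeeping in propagating ``type branch/triangle/augmenting with respect to $L(H_0)$'' to ``with respect to $L(H)$'': one must verify that a component $C$ whose attachment over $L(H_0)$ lies in a single flat branch (resp.\ triangle) of $L(H_0)$ cannot suddenly have a neighbor in a far-away piece of $L(H)$, and that an augmenting path for $L(H_0)$ remains augmenting for $L(H)$ (no neighbor in $L(H)\setminus(X\cup Y)$). This is not automatic and will require re-running the kind of minimal-subpath / triangular-ISK4 arguments used in Claims~\ref{c:prismlocal} and~\ref{c:local}, now with the larger graph $L(H)$ in the background. Ensuring the chosen $H_0$ has all its pieces compatibly embedded in $L(H)$ (flat branches into flat branches, triangles into triangles) is the technical heart; everything else is a routine combination of Lemmas~\ref{l:descc3c}, \ref{l:cycleedgek4}, \ref{l:compprism} and~\ref{l:compsk4}.
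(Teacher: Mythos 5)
Your high-level plan --- locate a subdivision of $K_4$ inside $H$, apply Lemma~\ref{l:compsk4} (or Lemma~\ref{l:compprism} when $H$ is a theta), and use substantiality to exclude type square --- is indeed the paper's strategy, but your proposal leaves exactly the hard steps unproved, and the shortcut you offer for them does not work. Your ``key observation'', that one can choose $H_0$ so that every flat branch of $L(H_0)$ extends to a flat branch of $L(H)$ and every triangle of $L(H_0)$ is a triangle of $L(H)$, is false in general: if $H$ is a subdivision of the cube (a substantial graph), any $K_4$-subdivision $H_0\subseteq H$ must use branches whose interior vertices have degree three in $H$, so those flat branches of $L(H_0)$ do not lie in flat branches of $L(H)$. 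More importantly, even when such an $H_0$ exists it is useless unless it is chosen relative to the attachment of $C$: knowing that $C$ is of type branch or triangle with respect to some $L(H_0)$ says nothing about its neighbours in $L(H)\setminus L(H_0)$. The paper ties the subdivision to $C$ by taking a minimal piece of $C$ whose attachment contains two non-incident edges $e_1=ab$, $e_2=cd$ of $H$ in distinct branches, a cycle $Z$ through them, and then Lemmas~\ref{l:2edges} and~\ref{l:thetask4} to produce a $K_4$-subdivision $H'$ containing both $e_1$ and $e_2$, which forces $C$ to be of type augmenting or square with respect to $L(H')$. Nothing in your sketch plays this role, and your closing remark that the propagation step ``will require re-running'' the earlier arguments concedes the heart of the lemma rather than proving it.

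The square case is also not handled. Your preliminary claim that when $H$ is a subdivision of a $3$-connected graph other than $K_4$ the graph $L(H)$ contains no even square is false (the line graph of the cube has even squares; containing a $K_4$-subdivision as a subgraph does not make $H$ a subdivision of $K_4$), and your replacement argument --- ``by $3$-connectivity there are three paths from a fifth vertex to the square, which combined with the link yields a triangular ISK4 in $G$'' --- is not a proof: those paths live in $H$, whereas the forbidden configuration must be exhibited as an induced subgraph of $G\supseteq L(H)$, and the link $P\subseteq C$ may have further neighbours in $L(H)\setminus L(H_0)$ that you have not controlled. The paper instead, in its Case~1, uses cyclic $3$-connectivity to build a second subdivision $H''$ of $K_4$ in which one corner of the square has degree two and applies Lemma~\ref{l:compsk4} to $L(H'')$ to get a contradiction; and in the augmenting case it must still rule out extra attachment vertices via Lemma~\ref{l:cycleedgek4} and a third application of Lemma~\ref{l:compsk4}. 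These constructions are the substance of the proof and are absent from your proposal.
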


\begin{proof}
  We suppose that $C$ is minimal with respect to the property of being
  not of type branch or triangle with respect to $L(H)$.  Note that
  every vertex in $H$ has degree at most three since $L(H)$ contains
  no $K_4$.  We may assume that there are two non-incident edges $e_1,
  e_2$ of $H$ that are members of the attachment of $C$ over $L(H)$
  and are not in the same branch of $H$, for otherwise all edges of
  the attachment of $C$ over $L(H)$ are either in the same branch of
  $H$, and so $C$ is of type branch or triangle, or are pairwise
  incident, and so $C$ is of type triangle.  Since $H$ is
  $2$-connected, there exists a cycle $Z$ of $H$ that goes through
  $e_1, e_2$, and we put $e_1=ab$, $e_2=cd$ so that $a, b, c, d$
  appear in this order along $Z$.  Note that $a, b, c, d$ are pairwise
  distinct.  Let $P$ be the subpath of $Z$ from $a$ to $d$ that does
  not contain $b, c$, and let $Q$ be the subpath of $Z$ from $b$ to
  $c$ that does not contain $a, d$.  By Lemma~\ref{l:2edges} there is
  a path $R$ with an end-vertex in $P$, an end-vertex in $Q$ and no
  interior vertex in $C$, and $R$ is not from $a$ to $b$ or from $c$
  to $d$.

  Suppose that $V(H) = V(P) \cup V(Q) \cup V(R)$.  Then $R$ must have
  length at least two, and $H$ must be a theta since it is
  substantial, so $L(H)$ is a prism.  By the preceding paragraph, the
  attachment of $C$ over $L(H)$ contains at least two vertices in
  distinct flat branches $L(H)$, and not in a triangle of that prism.
  So, by Lemma~\ref{l:compprism}, $C$ is of type augmenting or square
  with respect to the prism.  Moreover, type square is impossible
  because $H$ is substantial; so $C$ is of type augmenting, and the
  lemma holds.

  Now we may assume that $H$ has more edges than those in $P, Q, R$.
  By Lemma~\ref{l:descc3c}, $H$ is a subdivision of a $3$-connected
  graph.  Pick any $r \in V(P)\cap V(R), r' \in V(Q) \cap V(R)$ and
  put $P_1 = r P a b Q r'$, $P_2 = r P d c Q r'$, and $P_3 = R = r \tp
  \cdots \tp r'$.  By Lemma~\ref{l:thetask4}, for some distinct
  $i,j\in\{1, 2, 3\}$ there exists a path $S$ of $H$ with an end in
  the interior of $P_i$, an end in the interior of $P_j$ and such that
  the interior of $S$ is disjoint from $P_1, P_2, P_3$.  Since $H' =
  P_1 \cup P_2 \cup P_3 \cup S$ is a subdivision of $K_4$, we may
  apply Lemma~\ref{l:compsk4} to $C$ and $L(H')$.  Note that $C$
  cannot be of type branch or triangle with respect to $L(H')$ because
  of the edges $ab$ and $cd$.  Hence $C$ is of type square or
  augmenting with respect to $L(H')$, and, by the minimality of $C$,
  it is either a link of an even square of $L(H')$ or a connection
  between two branches of $L(H')$.  We claim that the interior
  vertices of $C$ have no neighbor in $L(H')$.  For suppose on the
  contrary that there is a vertex $w$ of $L(H')$ with a neighbor in
  the interior of $C$.  If $C$ is of type augmenting with respect to
  $L(H')$, then, by the minimality of $C$, $w$ must lie in the
  intersection of two edges of distinct flat branches of $L(H')$, a
  contradiction since flat branches of $L(H')$ do not intersect.  If
  $C$ is of type square with respect to $L(H')$, then, by the
  minimality of $C$, $w$ must lie in the intersection of two triangles
  of $L(H')$ that share a common vertex not in the square.  But then
  $C$ contains a path that closes a diamond, a contradiction.  So the
  claim is proved.  Now, we distinguish between two cases.
  
  \medskip

  \noindent{\it Case 1: $H$ contains a square subdivision of $K_4$ as
  a subgraph, and $C$ is of type square with respect to its
  line graph.} We may assume up to a relabeling that $C$ is of type
  square with respect to $L(H')$ and that $abcd$ is a square of $H$,
  $P_1 = ab$, $P_2 = dc$, $R$ is from $a$ to $c$ and $S$ is from $b$
  to $d$.  Every vertex of $H$ has degree at most three since $L(H)$
  contains no $K_4$.  Since $H$ is substantial, it is not a square
  subdivision of $K_4$, so there is a vertex in $H\setminus H'$.
  Since $H$ is connected and $H \neq H'$, there exists a neighbor in
  $V(H) \setminus V(H')$ of a vertex $e \in V(H')$, and $e\notin \{a,
  b, c, d\}$ because $a, b, c, d$ have already three neighbors.  So
  $e$ is in the interior of one of $S, R$ (say $S$).  Since $H$ is
  $2$-connected, $\{e\}$ is not a cutset of $H$ and there exists a
  path $T$ from~$e$ to some other vertex in $H'$.  If every such path
  has its two ends in $S$, then we put $A = V(P) \cup V(Q) \cup V(R)$,
  $B = (V(H) \setminus A) \cup \{b, d\}$ and we observe that $(A, B)$
  is a cyclic $2$-separation of $H$, a contradiction.  So we may
  assume that the other end $e'$ of $T$ is in the interior of $R$.
  Now let $H''$ be the subgraph of $H$ obtained from $P \cup Q \cup R
  \cup S \cup T$ by deleting the edges of the subpath $d$-$S$-$e$.  We
  observe that $H''$ is a subdivision of $K_4$ (whose corners are $a,
  b, c, e'$).  We now apply Lemma~\ref{l:compsk4} to $C$ and $L(H'')$.
  In fact $C$ cannot be of type branch, triangle or augmenting with
  respect to $L(H'')$, because $C$ has a neighbor in three distinct
  branches of $L(H'')$; and $C$ cannot be of type square because the
  edges $ab, bc, cd, da$ of $H$ do not form an even square in $L(H'')$
  since $d$ has degree two in $H''$.  This is a contradiction.
  
    \medskip
  
    \noindent{\it Case 2: We are not in Case~1.}  So $C$ is of type
    augmenting with respect to $L(H')$.  We may assume, up to a
    relabeling, that the attachment of $C$ over $L(H')$ consists of
    two pairs $\{e_1, e'_1\}$, $\{e_2, e'_2\}$ of adjacent vertices,
    where (in $H$) $e_1, e'_1$ are two incident edges of $P_1$ and
    $e_2, e'_2$ are two incident edges of $P_2$.  Suppose that there
    is a vertex $x$ different from $e_1, e_2, e'_1, e'_2$ in the
    attachment of $C$ over $L(H)$.  By Lemma~\ref{l:cycleedgek4}
    applied (in $H$) to edge $x$ and cycle $P_1\cup P_2$, $H$ contains
    a subdivision $H''$ of $K_4$ that contains $P_1\cup P_2 \cup
    \{x\}$.  By Lemma~\ref{l:compsk4}, $C$ is either of type branch,
    triangle, augmenting or square with respect to $L(H'')$.  In fact
    $C$ is not of type square as we are not in Case~1; moreover, $C$
    cannot be of type triangle or augmenting as it has at least five
    neighbors in $L(H'')$.  So it is of type branch.  But the branch
    of $H''$ containing $x$ is edgewise disjoint from $P_1\cup P_2$, a
    contradiction.  So $x$ does not exist, and we conclude that $C$ is
    of type augmenting with respect to $L(H)$.
\end{proof}

\begin{lemma}
  \label{l:substdecomp}
  Let $G$ be a graph that contains no triangular ISK4.  Let $H$ be a
  substantial graph such that $L(H)$ is an induced subgraph of $G$ and
  is inclusion-wise maximum with respect to that property.  Then
  either $G = L(H)$, or $G$ has a clique-cutset of size at most three,
  or $G$ has a proper $2$-cutset.
\end{lemma}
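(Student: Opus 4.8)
The plan is to analyze how the components of $G \setminus L(H)$ attach to $L(H)$, using Lemma~\ref{l:compsubstan} which tells us each such component $C$ is of type branch, triangle, or augmenting with respect to $L(H)$. First I would dispose of types branch and triangle: if $C$ is of type branch with attachment contained in a flat branch $B$ of $L(H)$, then the two end-vertices of $B$ (or a suitable vertex set of size at most two) should form a proper $2$-cutset separating $C$ from the rest of $L(H)$ — unless $B$ has an endpoint that the attachment reaches, in which case a single branch-vertex or a small clique does the job; and if $C$ is of type triangle with attachment inside a triangle $T$ of $L(H)$, then $V(T) \cap N(C)$ together with $C$'s separation from $L(H)\setminus T$ yields a clique-cutset of size at most three. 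The point is that in both cases the attachment is confined to a ``small'' structural piece whose boundary into the rest of $L(H)$ is a clique of size $\le 3$ or a non-adjacent pair, and this immediately gives the desired cutset unless $G = L(H)$.

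The substantive case is when every component $C$ of $G \setminus L(H)$ that is not of type branch or triangle is of type augmenting, via an augmenting path $P$ that is a connection between two distinct flat branches $X$, $Y$ of $L(H)$ with $N_X(p)$ an edge of $X$ and $N_Y(p')$ an edge of $Y$. Here I would argue that such an augmenting path, combined with $L(H)$, produces a \emph{larger} line graph: intuitively, an augmenting path corresponds, in the root graph $H$, to adding a new branch joining an interior subdivision vertex of one branch of $H$ to an interior vertex of another, which (using Lemmas~\ref{l:addedge} and \ref{l:descc3c}) keeps the root cyclically $3$-connected and still substantial — contradicting the inclusion-wise maximality of $L(H)$. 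The care needed is to check that $L(H')$ for this enlarged root $H'$ really is an \emph{induced} subgraph of $G$: this is where the ``no edge between $L(H)\setminus(X\cup Y)$ and $P$'' clause in the definition of augmenting, together with the structure of $P$ as an induced path with the prescribed attachment, is used. One must also handle the possibility that $H'$ fails to be substantial because it becomes a square theta or square subdivision of $K_4$; but adding a genuine extra branch to a substantial graph cannot create these, or if $H$ were already on the boundary one reuses the hypothesis that $H$ is substantial together with Lemma~\ref{l:compsubstan} being applicable in the first place.

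So the skeleton is: (1) pick a component $C$ of $G\setminus L(H)$, assuming $G \neq L(H)$; (2) apply Lemma~\ref{l:compsubstan} to get its type; (3) if branch or triangle, read off a clique-cutset of size $\le 3$ or a proper $2$-cutset directly from the bounding vertices of the relevant piece, taking a little care to ensure the two sides are non-empty and neither side is merely an $(a,b)$-path (which is where knowing $H$ is not a path/cycle, i.e.\ cyclically $3$-connected and substantial, pays off); (4) if augmenting, build the enlarged root $H'$ from the augmenting path, verify $H'$ is substantial and $L(H')$ is induced in $G$, and contradict maximality. The main obstacle I expect is step (4): carefully translating the augmenting path in $G$ back into an edge-and-subdivision operation on $H$ and checking all the hypotheses of cyclic $3$-connectedness and substantiality survive, while simultaneously certifying that the resulting $L(H')$ has no extra edges to the rest of $G$ — this bookkeeping about which edges of $G$ could be chords of $L(H')$ is the delicate part. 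Step (3) also hides a subtlety: when the attachment of a branch-type component touches an endpoint of its flat branch, that endpoint is a branch-vertex and one must argue the cutset is still of the allowed form.
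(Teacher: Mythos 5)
Your plan matches the paper's proof essentially step for step: apply Lemma~\ref{l:compsubstan} to a component of $G\setminus L(H)$, read off a clique-cutset or proper $2$-cutset in the branch and triangle cases, and in the augmenting case lift the augmenting path to a new branch of the root graph between two degree-two vertices, invoking Lemma~\ref{l:addedge} to keep the root substantial and contradicting the maximality of $L(H)$. The delicate points you flag (that $L(H')$ is induced, and that $H'$ stays substantial) are exactly the ones the paper handles, so the approach is the same and sound.
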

 
\begin{proof}
  Suppose that $G \neq L(H)$.  So there is a component $C$ of $G
  \setminus L(H)$.  Let us apply Lemma~\ref{l:compsubstan} to $C$ and
  $L(H)$.  Suppose that $C$ is of type augmenting.  So there is a path
  $P$ like in the definition of the type augmenting.  In $H$ the
  attachment of $C$ consists of four edge $ab$, $be$, $cd$, $df$,
  where $b, d$ have degree two in $H$.  Let us consider the graph $H'$
  obtained from $H$ by adding between $b$ and $d$ a path $R$ whose
  length is one plus the length of $P$.  Then $H'$ is substantial.
  Indeed, it is cyclically $3$-connected by Lemma~\ref{l:addedge}, and
  it is not a square theta or a square subdivision of $K_4$ since $H$
  is not a square theta.  Moreover, $L(H')$ is an induced subgraph of
  $G$, where $P$ corresponds to the path $R$.  This is a contradiction
  to the maximality of $L(H)$.  So $C$ is of type branch or triangle.
  If $C$ is of type branch, then the ends of the branch that contain
  the attachment of $C$ form a cutset of $G$ of size at most two.  So
  either this is a proper $2$-cutset or it contains a clique-cutset.
  If $C$ is of type triangle, then the triangle that contains the
  attachment of $C$ is a clique cutset of $G$.
\end{proof}


\section{Rich squares}

\begin{lemma}
  \label{l:richsquaredecomp}
  Let $G$ be an ISK4-free graph that does not contain the line graph
  of a substantial graph.  Let $K$ be a rich square that is an induced
  subgraph of $G$ and is maximal with respect to this property.  Then
  either $G=K$ or $G$ has a clique-cutset of size at most three or $G$
  has a proper $2$-cutset.
\end{lemma}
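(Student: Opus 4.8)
The plan is to mimic the structure of Lemma~\ref{l:substdecomp}: analyze how a component $C$ of $G\setminus K$ attaches to the maximal rich square $K$, show that each possible attachment pattern either contradicts maximality of $K$, or is small enough to yield a clique-cutset of size at most three or a proper $2$-cutset. Let $S=\{u_1,u_2,u_3,u_4\}$ (in cyclic order) be a central square of $K$, so that the components $L_1,\dots,L_k$ of $K\setminus S$ are links of $S$ (there are at least two of them). Since $G$ does not contain the line graph of a substantial graph, in particular $G$ contains no octahedron, no square prism beyond what $K$ itself supplies, and — crucially — $G$ is triangular-ISK4-free in the relevant sense (a triangular ISK4 would contain an ISK4). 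First I would establish a local lemma: for a suitably minimal connected $P\subseteq C$ not of ``small attachment'' type, $P$ is an induced path whose interior has no neighbor in $K$, and whose attachment $N_K(P)$ is not contained in a single link together with the two (or one) vertices of $S$ that the link joins, nor in a triangle/edge sitting inside one link. This is the analogue of Claim~\ref{c:prismlocal}/Claim~\ref{c:local}, and the proof is the same case analysis: if $P$ branched, three pieces would have to pairwise share vertices, which the geometry of a rich square forbids; if some interior vertex had a neighbor in $K$, one gets an induced subdivision of $K_4$ by routing through two links and part of the square.

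Next, with $P$ an induced path from $p$ to $p'$ whose attachment is ``spread out'', I would enumerate where the ends attach relative to $S$ and the links. The key cases: (a) $p$ attaches inside one link $L_i$ and $p'$ inside another link $L_j$, with $P$ anticomplete to $S$ and to all other links; (b) $p$ (or $p=p'$) attaches to $S$ itself, i.e. $N_S(p)$ is an edge or a diagonal pair or all of $S$, so that $P$ behaves like a new link. In case (b), using $P$ together with $S$ produces a larger rich square (adjoining the new link to $K$, or even enlarging along a link), contradicting the maximality of $K$ — one must check that the enlarged graph is still an induced rich square, which follows from the local lemma (no interior neighbors) and from the definition of a link. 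In case (a), I would argue that $P$ together with $S$ and the two links $L_i,L_j$ contains an induced subdivision of $K_4$: the square $S$ has four corners, and we have three internally disjoint induced paths among three of those corners (two sub-links of $L_i$ and $L_j$, plus $P$), wired through $S$, forcing a $K_4$-subdivision or else forcing $P$'s attachment to collapse into a single link. Subcases where $P$ attaches to $S$ at one vertex and into a link at the other end are handled the same way, by completing through $S$.

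The conclusion: once every component $C$ of $G\setminus K$ must be of ``small'' type — attachment contained in a single link $L_i$ together with the (at most two) vertices of $S$ that $L_i$ joins — one reads off the cutset. If the attachment lies in the interior of one link $L_i$ plus at most one endpoint on $S$, the relevant endpoints of the sub-path of $L_i$ spanning the attachment, together with that $S$-vertex, form a cutset of size at most two, hence a proper $2$-cutset or a clique-cutset. If the attachment of $C$ hits both $S$-vertices incident to a long link (or is contained in a short link and $S$), one gets a cutset of size at most three contained in $S$, which is a clique only if those vertices are mutually adjacent — here one uses that $S$ is a square, so a $3$-subset of $S$ is a path, not a clique; I would handle this by noting such an attachment is exactly a ``new short/long link'' and hence already excluded by the maximality argument of case (b), leaving only the genuinely small attachments that do give a clique-cutset or proper $2$-cutset.

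The main obstacle I expect is case (b) and the maximality bookkeeping: verifying that when $P$ looks like a link of $S$, the graph $K\cup P$ (or the graph obtained by absorbing $P$ into an existing link to lengthen it, or merging two links into one central-square structure) is genuinely an induced rich square of $G$ — in particular that $K\cup P$ still has at least two components off some central square and every such component is a link. The delicate point is that adding $P$ might merge two old components $L_i,L_j$ into one component that is no longer a link, so one needs the local lemma to guarantee $P$ is anticomplete to the other links, and one may need to re-choose which square of the enlarged graph is the central square (as in the $K_{2,2,2}$ remark). I would isolate this as a claim and prove it by checking the link axioms directly against the definition, using triangular-ISK4-freeness to rule out the configurations that would spoil ``inducedness''.
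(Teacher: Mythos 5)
There is a genuine gap, and it sits exactly where the hypothesis ``$G$ does not contain the line graph of a substantial graph'' has to do its work. In your case (a) --- an induced path $P$ whose ends attach into two different links $L_i,L_j$ and which is anticomplete to $S$ and to the other links --- you claim the union $S\cup L_i\cup L_j\cup P$ always yields an induced subdivision of $K_4$. It does not. If $N_{L_i}(p)$ and $N_{L_j}(p')$ are edges (the augmenting situation) and $L_i,L_j$ are orthogonal long links (or one of them is short), then $S\cup L_i\cup L_j$ is the line graph of a subdivision of $K_4$ and $S\cup L_i\cup L_j\cup P$ is the line graph of a \emph{substantial} graph; such a graph is ISK4-free and is not a larger rich square, so neither an ISK4 contradiction nor maximality can kill it --- only the stated hypothesis can, and your outline never invokes it at this point. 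This is precisely how the paper argues (Case~1 of its proof): it applies Lemma~\ref{l:compsk4} to $G[S\cup B_1\cup B_2]$, finds that $C$ must be of type augmenting, and then contradicts the hypothesis, reserving the ISK4 arguments for the case of two \emph{parallel} long links. Your misreading of the hypothesis shows up already in the preamble: excluding line graphs of substantial graphs does \emph{not} exclude the octahedron or a square prism, since those are $L(K_4)$ and the line graph of a square theta, and $K_4$ and square thetas are by definition not substantial (indeed, Theorem~\ref{th:main} invokes this very lemma when $G$ contains an octahedron).

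A secondary, smaller issue is the maximality bookkeeping you flag yourself. When $P$ looks like a link of a \emph{different} square (e.g.\ its attachment is $\{u_1,u_3,b_1,b_2\}$ with $b_1,b_2$ short links), the set $S\cup P\cup\{b_1,b_2\}$ is a rich square, but $K\cup P$ need not be a rich square with respect to any central square, so ``re-choosing the central square'' does not give a contradiction with maximality by itself; the paper instead uses maximality of $K$ to produce a third link $B_3$ and then exhibits a (non-triangular) ISK4 through $B_3\cup P\cup\{u_2,u_4,b_1,b_2\}$. Apart from these points, your overall architecture (reduce to attachment types, use maximality for new links, read off clique-cutsets or proper $2$-cutsets from small attachments) matches the paper, which however does not re-prove a local claim from scratch but reuses Lemmas~\ref{l:compprism} and~\ref{l:compsk4} applied to prisms, line graphs of ISK4s and octahedra found inside $K$.
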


\begin{proof}
  Let $S$ be a central square of $K$, with vertices $u_1, u_2, u_3,
  u_4$ and edges $u_1u_2, u_2u_3, u_3u_4, u_4u_1$.  Recall that every
  component of $K\setminus S$ is a link of $S$.  A link with ends $p,
  p'$ is said to be \emph{short} if $p=p'$, and \emph{long} if $p\neq
  p'$.  Note that long links are flat branches of~$K$.  If two long
  links $B_1= p_1 \tp \cdots \tp p'_1$ and $B_2= p_2\tp \cdots \tp
  p'_2$ are such that $N_S(p_1)= N_S(p_2)$ and $N_S(p'_1)= N_S(p'_2)$,
  then we say that $B_1, B_2$ are \emph{parallel}, otherwise they are
  \emph{orthogonal}.
  
  Suppose that $G\neq K$.  Let $C$ be a component of $G \setminus K$.
  We may assume that the attachment of $C$ over $K$ is not empty, for
  otherwise any vertex of $K$ would be a cutset of~$G$.  This leads to
  the following three cases.
  
  \medskip

  \noindent{\it Case 1: $N_K(C)$ contains vertices of a long
  link of $S$.} Let $B_1 = p_1 \tp \cdots \tp p'_1$ be such a
  link.  We may assume up to symmetry that $N_S(p_1) = \{u_1,
  u_2\}$ and $N_S(p_1') = \{u_3, u_4\}$.  If $C$ has no neighbor in
  $K\setminus B_1$, then $\{p_1, p_1'\}$ is a proper $2$-cutset of~$G$
  and the lemma holds.  So $C$ has a neighbor in $K\sm B_1$.
  
  Suppose that $C$ has no neighbor in $K \setminus (S\cup B_1)$.
  Hence $C$ has a neighbor in $S$.  By Lemma~\ref{l:compprism} applied
  to the prism $S \cup B_1$ and $C$, we deduce that $C$ is of type
  augmenting, triangle or square.  If $C$ is of type triangle, then
  there is a triangle cutset in $G$, and the lemma holds.  If $C$ is
  of type augmenting, let $P$ be a shortest path of $C$ that sees
  $B_1$ and $S$.  Let $B$ be a component of $K\setminus (S\cup B_1)$.
  Then $G[B_1\cup B \cup P \cup \{u_1, u_3\}]$ is an ISK4, a
  contradiction.  If $C$ is of type square and not augmenting, then it
  must be that $B_1$ has length one and, up to symmetry, $C$ contains
  a path $P$ with one end adjacent to $u_1, p_1$ and the other end to
  $u_4, p'_1$.  Let $B$ be any component of $K\setminus (S\cup B_1)$.
  Then $G[B_1\cup B \cup P \cup \{u_1, u_3\}]$ is an ISK4, a
  contradiction.

  Therefore $N_K(C)$ contains vertices of a component $B_2$ of $K\sm
  (S\cup B_1)$.  Suppose that $B_2$ is either short or orthogonal to
  $B_1$.  Then $K' = G[S\cup B_1\cup B_2]$ is the line graph of a
  subdivision of $K_4$, and we can apply Lemma~\ref{l:compsk4} to $K'$
  and $C$.  Clearly, $C$ is not of type branch or triangle with
  respect to $K'$, and it is also not of type square because $B_1 \cup
  B_2$ contains no even square of $K'$.  So $C$ is of type
  augmenting, with a path $P$ as in the definition of type augmenting.
  This implies that $B_2$ is a flat branch of $K$, and so it is a
  long link of $S$.  Then $G[S \cup B_1 \cup B_2 \cup P]$ is the
  line graph of a substantial graph, a contradiction.

  So $B_2$ is a long link parallel to $B_1$.  Let $B_2 = p_2 \tp
  \cdots \tp p'_2$ with $N_S(p_2)=N_S(p_1)$ and $N_S(p'_2)=N_S(p'_1)$.
  Let $P= p_3 \tp \cdots \tp p'_3$ be a shortest path of $C$ such that
  $p_3$ has neighbors in $B_1$ and $p_3'$ has neighbors in $B_2$.  If
  no vertex of $P$ has a neighbor in $\{u_1, u_2\}$, then $B_1 \cup
  B_2 \cup P$ contains a path that closes the diamond $\{p_1, p_2,
  u_1, u_2\}$, a contradiction.  So some vertex of $P$ has a neighbor
  in $\{u_1, u_2\}$ and similarly some vertex of $P$ has a neighbor in
  $\{u_3, u_4\}$.  By Lemma~\ref{l:compprism} applied to the prism $K'
  = G[S \cup B_1]$ and $P$, we deduce that $P$ is of type augmenting
  with respect to $K'$.  Let $P'$ be a shortest subpath of $P$ that
  contains neighbors of $B_1$ and $S$.  One end of $P'$ must be $p_3$,
  and $N_{B_1}(p_3) = \{q_1, q'_1\}$, where $q_1 q'_1$ is an edge of
  $B_1$ and $p_1, q_1, q_1', p_1'$ appear in this order along $B_1$.
  We denote the other end of $P'$ by $p_3''$, and we can assume up to
  symmetry that $N_K(p_3'') = \{ u_2, u_3\}$.  If $p_3'\neq p_3''$,
  then $B_1 \cup B_2 \cup P' \cup \{u_1, u_3\}$ is a triangular ISK4,
  a contradiction.  So $p_3' = p_3''$.  By Lemma~\ref{l:compprism}
  applied to $K'' = G[S\cup B_2]$ and $p'_3$, we deduce that $p_3'$ is
  of type augmenting with respect to $K''$, so $N_{B_2}(p_3') = \{q_2,
  q'_2\}$, where $q_2, q'_2$ is an edge of $B_2$ and $p_2, q_2, q_2',
  p_2'$ appear in this order along $B_2$.  Then the paths $p'_3 \tp
  u_2$, $p'_3 \tp q_2 \tp B_2 \tp p_2$ and $p'_3 \tp P \tp p_3 \tp
  q'_1 \tp B_1 \tp p'_1 \tp u_4 \tp u_1$ form a triangular ISK4, a
  contradiction.

  \medskip

  \noindent{\it Case 2: $N_K(C)$ does not contain any vertex of a long
  link of $S$, and contains vertices of a short link.} So there exists
  a vertex $b_1$ adjacent to all of $S$ and to $C$.  Suppose that $C$
  is also adjacent to a component of $K\setminus (S\cup b_1)$, that
  is, to a vertex $b_2\neq b_1$ adjacent to all of $S$.  Then $K' =
  G[S \cup \{b_1, b_2\}]$ is the line graph of $K_4$, so we can apply
  Lemma~\ref{l:compsk4} to $K'$ and $C$.  We deduce that $C$ is of
  type square with respect to $K'$, with a linking path $P$.  Since $K
  \cup P$ cannot be a rich square (which would contradict the
  maximality of $K$), we may assume up to symmetry that $N_{K'}(P) =
  \{u_1, u_3, b_1, b_2\}$.  Since $K$ is a maximal rich square, and
  $S\cup P \cup \{b_1, b_2\}$ is a rich square, $K \setminus (S\cup
  \{b_1, b_2\})$ must have a component $B_3$ (a link of $S$).  Then
  $B_3\cup P \cup \{u_2, u_4, b_1, b_2\}$ is a (non-triangular) ISK4,
  a contradiction.  So no vertex of $C$ has a neighbor in $K\setminus
  (S\cup \{b_1\})$.  Let $B_2$ be any component of $K\setminus
  (S\cup\{b_1\})$.  Note that $K' = S\cup B_2 \cup \{b_1\}$ is the
  line graph of a subdivision of $K_4$.  By Lemma~\ref{l:compsk4}
  applied to $K'$ and $C$, we deduce that $C$ is of type triangle with
  respect to $K'$.  Since no vertex of $C$ has a neighbor in a
  component of $K\setminus S$ (except $b_1$), we see that $G$ has a
  triangle cutset.
  
    \medskip

  \noindent{\it Case 3: $N_K(C)$ is included in $S$.} Let $K'$ be a
  subgraph of $K$ that contains $S$ and is either the line graph of an
  ISK4 or a prism (take $S$ plus a long link if possible or two
  short links otherwise).  We can apply Lemma~\ref{l:compprism}
  or~\ref{l:compsk4} to $K'$ and $C$.  If $C$ is of type augmenting or
  square with respect to $K'$ with path $P$, then $K \cup P$ is a rich
  square, a contradiction to the maximality of $K$.  If $C$ is of type
  branch or triangle, then $G$ has a proper $2$-cutset or a clique
  cutset.
\end{proof}


\section{Prisms}

\begin{lemma}
  \label{l:prismdecomp}
  Let $G$ be an ISK4-free graph that does not contain the line graph
  of a substantial graph or a rich square as an induced subgraph.  Let
  $K$ be a prism that is an induced subgraph of $G$.  Then either
  $G=K$ or $G$ has a clique-cutset of size at most three or $G$ has a
  proper $2$-cutset.
\end{lemma}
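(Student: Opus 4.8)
The plan is to begin by pinning down the structure of $K$. Since $K$ is a prism, $K = L(\Theta)$ for some theta $\Theta$. By Lemma~\ref{l:descc3c} a theta is cyclically $3$-connected, and a theta is not a subdivision of $K_4$ (it has only two vertices of degree three); hence $\Theta$ is substantial unless it is a square theta. As $K = L(\Theta)$ is an induced subgraph of $G$ and $G$ contains no line graph of a substantial graph, $\Theta$ must be a square theta, so $K$ is a square prism. Fix an even square $S = \{u_1, u_2, u_3, u_4\}$ of $K$ (listed in cyclic order) with $K = S \cup B$, where $B = V(K) \setminus S$ is the third flat branch of $K$; recall that $B$ is then a link of $S$.

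Assume $G \neq K$. I first claim that no component of $G \setminus K$ is of type augmenting or type square with respect to $K$. Suppose $C$ is of type augmenting, with augmenting path $P$ between two distinct flat branches $X, Y$ of $K$. By definition the interior of $P$ is anticomplete to $K$, while $N_K(P) = N_X(p) \cup N_Y(p')$ consists of one edge of $X$ and one edge of $Y$; translating into $\Theta$, an edge of a flat branch of $L(\Theta)$ corresponds to a pair of consecutive edges of a branch of $\Theta$, that is, to an \emph{interior} vertex of that branch. Hence $K \cup P = L(\Theta')$, where $\Theta'$ is obtained from $\Theta$ by adding a path joining an interior vertex of the branch for $X$ to an interior vertex of the branch for $Y$; since those branches are distinct, $\Theta'$ is a subdivision of $K_4$, and $L(\Theta') = K \cup P$ is induced in $G$. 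If $\Theta'$ is substantial this contradicts the hypothesis. Otherwise, as $\Theta'$ is a $K_4$-subdivision (hence not a square theta), it must be a square subdivision of $K_4$; but then $L(\Theta')$ is a rich square (its even square is formed by the edges of the four length-one branches on the corners, and the edges of the two remaining branches form the two links), again a contradiction. Now suppose $C$ is of type square, with even square $S'$ of $K$ and linking path $P$. Since $K$ is a square prism, $V(K) \setminus S'$ is a link of $S'$; moreover $P$ is a link of $S'$ whose only neighbors in $K$ lie in $S'$, so $P$ and $V(K) \setminus S'$ are anticomplete. Thus $(K \cup P) \setminus S'$ has exactly the two components $V(K) \setminus S'$ and $P$, each a link of $S'$, so $K \cup P$ is a rich square in $G$ --- a contradiction. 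This proves the claim.

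By Lemma~\ref{l:compprism}, every component of $G \setminus K$ is therefore of type branch or triangle. Let $C$ be any such component; if $N_K(C) = \emptyset$ then any single vertex of $K$ is a clique-cutset of $G$, so assume $N_K(C) \neq \emptyset$. If $C$ is of type triangle, let $T$ be the triangle of $K$ with $N_K(C) \subseteq T$; then $V(K) \setminus T$ is nonempty (a prism has at least six vertices) and $C$ has no neighbor in $G$ outside $T$, so $T$ is a clique-cutset of $G$ of size three. If $C$ is of type branch, let $X$ be the flat branch of $K$ with $N_K(C) \subseteq X$ and let $x, x'$ be its ends. Every component of $G \setminus K$ with a neighbor in the interior of $X$ has its whole attachment in $X$ (it is of type branch, and interior vertices of $X$ lie in no triangle and in no other flat branch), so $\{x, x'\}$ separates $C$ from $V(K) \setminus X$. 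If $xx' \in E(G)$, or $|N_K(C)| \le 1$, this gives a clique-cutset of size at most two; if $xx' \notin E(G)$, one checks that $\{x, x'\}$ is a proper $2$-cutset, putting the side that contains a cycle of the prism on one part and grouping the interior of $X$ together with $C$ on the other, so that neither part induces an $(x, x')$-path.

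The main obstacle I anticipate is the type-augmenting case: one must verify carefully that adjoining the augmenting path produces exactly the line graph of a subdivision $\Theta'$ of $K_4$ --- this rests on the attachments being single edges of flat branches, which correspond to interior vertices of branches of $\Theta$ --- and one must check that the only non-substantial alternative, a square subdivision of $K_4$, has a rich square for its line graph. The proper $2$-cutset bookkeeping in the branch case is routine, though one must be slightly careful because deleting the two ends of a long flat branch isolates its interior.
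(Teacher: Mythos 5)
Your proof is correct and follows essentially the same route as the paper: apply Lemma~\ref{l:compprism} to each component of $G\setminus K$, rule out the augmenting and square types because $K\cup P$ would be the line graph of a substantial graph (a non-square subdivision of $K_4$) or a rich square, and extract a clique-cutset or proper $2$-cutset from the branch and triangle types. You merely fill in details the paper leaves implicit (that $K$ is a square prism, that the line graph of a square subdivision of $K_4$ is a rich square, and the proper-$2$-cutset bookkeeping), so there is nothing to add.
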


\begin{proof}
  Suppose that $G\neq K$, and let $C$ be any component of $G\setminus
  K$.  Apply Lemma~\ref{l:compprism} to $K$ and $C$.  If $C$ is of
  type branch, then the ends of the branch of $K$ that contains the
  attachment of $C$ over $K$ is a cutset of size at most two, and
  either it is proper or it contains a clique cutset.  If $C$ is of
  type triangle, then $G$ has a triangle cutset.  If $C$ is of type
  augmenting, with augmenting path $P$, then $P \cup K$ is either the
  line graph of a non-square subdivision of $K_4$, or a rich square,
  in both cases a contradiction.  If $C$ is of type square, with a
  linking path $P$, then $K\cup P$ is a rich square, a contradiction.
\end{proof}

\begin{lemma}
  \label{l:mainlinegraphdecomp}
  Let $G$ be an ISK4-free graph that contains a prism.  Then either $G$
  is the line graph of a graph with maximum degree three,  or $G$ is a
  rich square,  or $G$ has a clique-cutset of size at most three or $G$
  has a proper $2$-cutset.
\end{lemma}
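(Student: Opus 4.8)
The plan is to bootstrap from the basic graphs already handled. Assume $G$ is ISK4-free and contains a prism; we may assume $G$ has no clique-cutset of size at most three and no proper $2$-cutset, and we must show $G$ is a line graph of a graph of maximum degree at most three or a rich square. The first reduction is to dispose of triangular subdivisions of $K_4$: if $G$ contains a triangular ISK4, show directly that $G$ itself is small or has one of the forbidden cutsets — in fact a triangular ISK4 is the line graph of a theta with one branch of length one collapsed, i.e.\ it contains a prism-like structure, or more simply one should observe that a triangular subdivision of $K_4$ is never induced unless it is already a prism or $K_4$ minus something; the clean route is to note that any ISK4-free graph containing a triangular ISK4 as a subgraph is handled because the triangle forces enough structure. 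Actually the cleanest approach is: since $G$ contains a prism, and a prism is $L(\theta)$ for a theta $\theta$, take a substantial graph $H$ such that $L(H)$ is an induced subgraph of $G$ and is inclusion-wise maximum, \emph{if} such an $H$ exists. A prism is $L(H)$ for $H$ a theta, and a theta is cyclically $3$-connected; a theta is substantial unless it is a square theta. So we split into cases according to whether the prism we start from extends to the line graph of a substantial graph.

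First I would handle the case where $G$ contains $L(H)$ for some substantial $H$: apply Lemma~\ref{l:substdecomp} directly — since we are assuming $G$ has no clique-cutset of size $\le 3$ and no proper $2$-cutset, we get $G = L(H)$, and since $L(H)$ contains no $K_4$ every vertex of $H$ has degree at most three, so $G$ is the line graph of a graph of maximum degree at most three, as required. Next, suppose $G$ contains no line graph of a substantial graph. Then in particular our starting prism $K = L(\theta)$ must have $\theta$ non-substantial, so $\theta$ is a square theta and $K$ is a square prism, which is a rich square (its square $S$ has the remaining branch as a link, but a square prism has only \emph{one} component in $K\setminus S$, so it is not yet a rich square — here I would instead note that a square prism is a prism and apply Lemma~\ref{l:prismdecomp}). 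So apply Lemma~\ref{l:prismdecomp} to $G$ and $K$: since $G$ has no clique-cutset of size $\le 3$ and no proper $2$-cutset, either $G = K$, or one of those cutsets exists. If $G = K$, then $G$ is a (square) prism, which is the line graph of a (square) theta, a graph of maximum degree three, and we are done. But wait — Lemma~\ref{l:prismdecomp} requires $G$ to contain no rich square either. So I would further branch: if $G$ contains a rich square, take one that is maximal and apply Lemma~\ref{l:richsquaredecomp}, concluding $G = K$ is a rich square or a cutset exists; if $G$ contains no rich square (and no line graph of a substantial graph), apply Lemma~\ref{l:prismdecomp}.

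Assembling these, the logical skeleton is: (1) if $G$ contains the line graph of a substantial graph, use Lemma~\ref{l:substdecomp}; (2) otherwise, if $G$ contains a rich square, use Lemma~\ref{l:richsquaredecomp}; (3) otherwise, use Lemma~\ref{l:prismdecomp} on the prism we are given — but one must check the hypotheses of each lemma are met, in particular that in case (3) $G$ genuinely contains no line graph of a substantial graph and no rich square. The outputs $G = L(H)$, $G$ a rich square, or $G = K$ a prism all fall under the three allowed conclusions (a prism is the line graph of a theta, hence of a graph of maximum degree three). The main obstacle I anticipate is the bookkeeping in case (3): Lemma~\ref{l:prismdecomp} assumes no rich square \emph{and} no line graph of a substantial graph, so I must make sure that ruling these out globally (not just for the particular prism) is legitimate, and that the prism hypothesis of Lemma~\ref{l:mainlinegraphdecomp} is preserved. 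A secondary subtlety is confirming that a prism is always the line graph of a graph of maximum degree at most three (the root theta has two vertices of degree three and the rest of degree two), so the ``$G = K$'' outcome of Lemma~\ref{l:prismdecomp} indeed lands in the first bullet. None of these steps requires new combinatorial work beyond invoking the lemmas already proved.
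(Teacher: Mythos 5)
Your plan is correct and follows essentially the same route as the paper's proof: split into the cases where $G$ contains the line graph of a substantial graph (apply Lemma~\ref{l:substdecomp} to a maximal one), where it does not but contains a rich square (apply Lemma~\ref{l:richsquaredecomp} to a maximal one), and where it contains neither (apply Lemma~\ref{l:prismdecomp} to the given prism), noting that the outcomes $G=L(H)$ with no $K_4$, $G$ a rich square, and $G$ a prism (the line graph of a theta) all land in the allowed conclusions. The hypothesis bookkeeping you flag is exactly what makes the case order legitimate, and no further work is needed.
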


\begin{proof}
  Since $G$ contains a prism, it contains as an induced subgraph the
  line graph $L(H)$ of a cyclically $3$-connected graph.  By
  Lemma~\ref{l:descc3c}, $H$ is either a theta or a subdivision of a
  $3$-connected graph.  In the latter case, if $H$ is substantial,
  then the result holds by Lemma~\ref{l:substdecomp}.  Else, we may
  assume that $G$ does not contain the line graph of a substantial
  graph and $L(H)$ is a rich square made of a square with two links,
  and then the result holds by Lemma~\ref{l:richsquaredecomp}.  Hence,
  in the first case, we may assume that $G$ contains no rich square
  and no line graph of a substantial graph.  Then the result holds by
  Lemma~\ref{l:prismdecomp}.
\end{proof}


\section{Wheels and double star cutset}

A \emph{paw} is a graph with four vertices $a,  b,  c,  d$ and four edges
$ab,  ac,  ad,  bc$.

\begin{lemma}\label{thm:paw}
  Let $G$ be a graph that does not contain a triangular ISK4 or a
  prism.  If $G$ contains a paw,  then $G$ has a star-cutset.
\end{lemma}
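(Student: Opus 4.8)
The plan is to start from a paw in $G$, that is, a triangle $abc$ together with a vertex $d$ whose only neighbor in $\{a,b,c\}$ is $a$, and to show that some neighborhood-based set separates $d$ from the rest of the triangle. The natural candidate for a star-cutset is $S = N[a]$, the closed neighborhood of $a$: its center is $a$, which is adjacent to everything else in $S$, so it suffices to show that $G \setminus N[a]$ is disconnected, with $d$ and at least one of $b,c$ lying in different components (note $b,c \in G\setminus N[a]$ unless they happen to be in $N[a]$, but $b,c$ are neighbors of $a$, so in fact $b,c \in N[a]$; thus the separation must be witnessed differently). So instead I would look for a star-cutset centered at $a$ that separates $d$ from $b$ and $c$: since $b,c \in N(a) \subseteq N[a]$, they are \emph{in} the cutset, and we need $d$ separated in $G \setminus N[a]$ from some other vertex. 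The cleaner formulation is: either $N[a]$ is already a star-cutset, or we can enlarge the triangle/paw configuration to derive a contradiction with ISK4-freeness or with the absence of a prism.

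The key steps, in order, are as follows. First, choose the paw $\{a,b,c,d\}$ and consider the component $D$ of $G \setminus N[a]$ that contains $d$. If $N[a]$ is not a star-cutset, then $G \setminus N[a]$ is connected, so $D = G \setminus N[a]$ and in particular every vertex of $G$ other than $a$ and its neighbors can be reached from $d$ avoiding $N[a]$. Second, I would argue about how $D$ attaches to the triangle $T = \{a,b,c\}$: since $d \in D$ is adjacent to $a$ — wait, $d \in N(a)$, so $d \notin G\setminus N[a]$. This is the subtlety to sort out: $d$ is a neighbor of $a$, hence $d \in N[a]$, so the separation must put the \emph{rest} of the paw structure to use. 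The right move is to take $S = N[a]$ and examine the component structure of $G \setminus N[a]$: here $b, c \in N[a]$, $d \in N[a]$, so none of the four paw vertices survive. Thus $N[a]$ being a cutset requires two vertices outside $N[a]$ in different components. To force this, I would instead center the star-cutset at a vertex of the triangle that has a suitable non-neighbor, or use the structure: look at $G \setminus N[b]$ or $G \setminus N[c]$. Since $d \notin N[b]$ and $d \notin N[c]$ (as $db, dc \notin E$), both $c$-side and $d$-side vertices survive in $G \setminus N[b]$, for instance. The real plan: show $N[b]$ (or $N[c]$) is a star-cutset separating $d$ from some vertex, OR build a prism/triangular ISK4.

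Concretely: assume for contradiction that none of $N[a], N[b], N[c]$ is a star-cutset. Consider the component $D$ of $G\setminus N[b]$ containing $d$; it also must contain $a$ (since $a \notin N[b]$? no, $a \in N(b)$, so $a\in N[b]$) — so again $a$ is in the cutset. The vertices surviving in $G\setminus N[b]$ include $d$. If this is not a cutset, $d$ connects to everything outside $N[b]$. The plan is then to trace a shortest path from $d$ back to the triangle and combine it with $b$, $c$, $a$ and the branches it creates to produce either a triangular ISK4 (contradicting the hypothesis) or a prism (also contradicting the hypothesis): roughly, a chordless path $P$ from $d$ to a vertex near $c$ avoiding $N[b]$, together with the triangle $abc$ and the edge-disjoint structure, yields a subdivision of $K_4$ containing the triangle $abc$, which is a triangular ISK4. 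The main obstacle I expect is the careful case analysis on how the separating component attaches to the triangle and to the neighborhoods $N(a) \setminus T$, $N(c)\setminus T$ — in particular handling attachments that are single vertices or edges (which give small cutsets rather than contradictions) versus larger attachments (which give the forbidden ISK4 or prism). Getting the bookkeeping right so that \emph{some} $N[x]$ with $x \in \{a,b,c\}$ is genuinely a star-cutset, rather than producing a $1$-cutset or clique-cutset that needs separate treatment, is the delicate part; I would organize it by first reducing to the case where all three closed neighborhoods fail to be star-cutsets and then deriving the contradiction from a single well-chosen shortest path.
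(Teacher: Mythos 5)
Your plan has a genuine gap: you restrict yourself to candidate star-cutsets of the form $N[x]$ for $x\in\{a,b,c\}$, and, as you notice yourself, every such set swallows the paw vertices you would need to separate ($b,c,d\in N[a]$, $a,c\in N[b]$, etc.), so the assumption that these sets are not cutsets gives you no path between specified paw vertices and no control over how such a path meets the triangle. The missing idea is that, by the paper's definition, a star-cutset centered at $a$ may be \emph{any} set containing $a$ and contained in $N[a]$; one is free to delete chosen leaves from the star. The paper's proof exploits exactly this: since $\{a\}\cup N(a)\setminus\{b,d\}$ is not a cutset, there is a chordless $(b,d)$-path $P_1$ whose interior avoids $a$ and all of $N(a)$, and similarly $\{a\}\cup N(a)\setminus\{c,d\}$ yields a chordless $(c,d)$-path $P_2$ with interior avoiding $N(a)$. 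This is what makes the subsequent constructions induced: inside $V(P_1)\cup V(P_2)$ one extracts a cycle $Q$ through $b$ and $c$ whose only chord is $bc$, and either $d\in Q$ (then $Q\cup\{a\}$ is a triangular ISK4) or a minimal path $R$ from $d$ to $Q$ exists, and according to whether its last vertex has one neighbor, two adjacent neighbors, or two non-adjacent neighbors in $Q$, one obtains a triangular ISK4, a prism, or again a triangular ISK4 --- contradictions in all cases.

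Your fallback ("a shortest path from $d$ back to the triangle avoiding $N[b]$, combined with $a,b,c$, yields an ISK4 or prism") is not carried out and, as stated, does not work: the non-cutset assumption for $N[b]$ only connects vertices lying \emph{outside} $N[b]$, so it does not provide a path from $d$ to $c$ or to $a$ (both in $N[b]$), and it gives no guarantee that the interior of whatever path you find avoids $N(a)$, which is essential for the triangle $abc$ together with the paths to induce a subdivision of $K_4$ or a prism rather than a subgraph with extra edges. Without the partial-star observation (or some substitute giving two controlled chordless paths from $d$ to $b$ and from $d$ to $c$ missing $N(a)$), the case analysis you defer to "bookkeeping" cannot be set up.
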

\begin{proof}
  Suppose that $G$ does not have a star-cutset.  Let $X$ be a paw in
  $G$, with vertices $a, b, c, d$ and edges $ab, ac, ad, bc$.  Since
  $G$ does not admit a star-cutset, the set $\{a\}\cup N(a)\setminus
  \{b, d\}$ is not a cutset of $G$, so there exists a chordless path
  $P_1$ with endvertices $b, d$ such that the interior vertices of
  $P_1$ are distinct from $a$ and not adjacent to $a$.  Likewise, the
  set $\{a\}\cup N(a)\setminus \{c, d\}$ is not a cutset of $G$, so
  there exists a chordless path $P_2$ with endvertices $c, d$ such
  that the interior vertices of $P_2$ are distinct from $a$ and not
  adjacent to $a$.  The definition of $P_1, P_2$ implies that there
  exists a path $Q$ with endvertices $b, c$ such that $V(Q)\subseteq
  V(P_1)\cup V(P_2)$, $Q$ is not equal to the edge $bc$, and $bc$ is
  the only chord of $Q$.  So $V(Q)$ induces a cycle.  If $d$ is in
  $Q$, then $V(Q)\cup \{a\}$ induces a triangular subdivision of
  $K_4$, a contradiction.  If $d$ is not in $Q$, then the definition
  of $P_1, P_2$ implies that there exists a path $R$ whose endvertices
  are $d$ and a vertex $q$ of $Q$ and $V(R)\subseteq V(P_1)\cup
  V(P_2)$.  We choose a minimal such path $R$.  Let $d'$ be the
  neighbor of $q$ in $R$.  the minimality of $R$ implies that $R$ is
  chordless, $(V(R)\setminus \{q\}) \cap V(Q)=\emptyset$, and $d'$ is
  the only vertex of $R$ with a neighbor in $Q$.  If $d'$ has only one
  neighbor in $Q$, then $V(Q)\cup V(R)\cup \{a\}$ induces a triangular
  subdivision of $K_4$ (whose corners are $a, b, c, q$), a
  contradiction.  If $d'$ has exactly two neighbors in $Q$ and these
  are adjacent, then $V(Q)\cup V(R)\cup \{a\}$ induces a prism, a
  contradiction.  If $d'$ has at least two non-adjacent neighbors in
  $Q$, then $V(Q)\cup V(R)\cup \{a\}$ contains an induced triangular
  subdivision of $K_4$ (whose corners are $a, b, c, d'$), a
  contradiction.
\end{proof}

\begin{lemma}\label{thm:hu4}
  Let $G$ be an ISK4-free graph that does not contain a prism or an
  octahedron.  If $G$ contains a wheel $(H,  u)$ with $|V(H)|=4$,  then
  $G$ has a star-cutset.
\end{lemma}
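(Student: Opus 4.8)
The plan is to argue by contradiction: suppose $G$ is ISK4-free with no prism and no octahedron, $G$ contains a wheel $(H,u)$ where $H$ is a hole on exactly four vertices — so $H$ is a square $h_1h_2h_3h_4$ — and $u$ is adjacent to at least three of the $h_i$; and suppose $G$ has no star-cutset. Since $G$ has no $K_4$, $u$ cannot be adjacent to all four $h_i$ (that would be a $K_4$ if consecutive, and in a square every three vertices include two consecutive ones), so $u$ is adjacent to exactly three, say $h_1,h_2,h_3$, and not to $h_4$. Note $\{u,h_1,h_2,h_3\}$ induces a diamond. The first step is to exploit the absence of a star-cutset centered at $u$: the set $\{u\}\cup N(u)\setminus\{h_4\}$ is not a cutset, so there is a chordless path from $h_4$ to the rest of $G$ avoiding $N[u]\setminus\{h_4\}$; more usefully, $h_4$ together with the square gives us paths we can close into cycles through $u$.

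**Finding the forbidden substructure.** The core idea is the same closing-a-diamond / closing-a-path trick used in the proof of Lemma~\ref{thm:paw}: since $\{u\}\cup N(u)\setminus\{h_1,h_4\}$ is not a cutset, there is a chordless path $P_1$ from $h_1$ to $h_4$ whose interior misses $N[u]$; likewise a chordless path $P_4$ from $h_3$ to $h_4$ whose interior misses $N[u]$. From $P_1$ and $P_4$ one extracts, as in Lemma~\ref{thm:paw}, a path $Q$ from $h_1$ to $h_3$ using only vertices of $P_1\cup P_4$, not equal to any edge and with controlled chords, and a connecting structure to $h_4$. The key point is that $h_2$ is adjacent to $h_1$ and $h_3$ and $u$, while $h_4$ is adjacent to $h_1,h_3$ but not $u$; so the cycle through $h_1,\dots,h_3$ avoiding $N(u)$ in its interior, together with $u$ (adjacent to $h_1,h_3$) and possibly $h_2$ or $h_4$, will build a triangular ISK4, a prism, or an octahedron depending on how a branch attaches — exactly the three forbidden outcomes. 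I would organize this as: first handle the case where $h_2$ itself has a neighbor on the relevant path (gives a wheel with a smaller or a triangular ISK4), then the generic case where some chordless path $R$ reaches $h_4$ from the cycle $Q\cup\{u\}$, analyzing the number and adjacency of the neighbors of $R$'s last vertex on the cycle (one neighbor: triangular ISK4; two adjacent: prism; two non-adjacent: another triangular ISK4 or octahedron).

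**The main obstacle.** The hard part will be bookkeeping the many small cases cleanly — in particular distinguishing when the "link" from $h_4$ lands on the $u$-side versus the $h_2$-side of the cycle, and making sure that in each configuration the induced subgraph we name is genuinely \emph{induced} (no unwanted chords), which is where the chordlessness of $P_1,P_4$ and their non-adjacency to $u$ is used. A second subtlety is ruling out that $u$ has \emph{additional} neighbors on $P_1$, $P_4$, $Q$ or $R$ beyond $h_1,h_3$: this is exactly what the "interior misses $N[u]$" condition buys us, but one must check it survives the passage from $P_1,P_4$ to $Q$ and $R$. Once the structural dichotomy is in place the contradictions themselves are immediate, since each named subgraph is one of: a triangular subdivision of $K_4$ (forbidden as $G$ is ISK4-free and it contains a triangle), a prism, or an octahedron (both explicitly excluded by hypothesis). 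So the strategy is: reduce to $u$ adjacent to exactly $\{h_1,h_2,h_3\}$; use the no-star-cutset hypothesis at $u$ to produce chordless $N[u]$-avoiding paths from $h_4$; recombine them into a cycle $Q$ through $h_1,h_3$ and a minimal connector $R$ to $h_4$; and case on the attachment of $R$ to conclude.
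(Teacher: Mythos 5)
Your opening case reduction is backwards, and this is a genuine gap rather than a bookkeeping issue. You discard the case where $u$ is adjacent to all four vertices of the square by claiming it forces a $K_4$; it does not. A $K_4$ through $u$ would require a triangle inside the 4-hole $H$, and a square has none: $u$ complete to $H$ just gives $K_{1,2,2}$ (the 4-wheel), which is $K_4$-free. So the case you throw away is not only possible, it is the only case that requires work. Conversely, the case you propose to analyze at length --- $u$ adjacent to exactly three vertices, say $h_1,h_2,h_3$ --- is immediate: $V(H)\cup\{u\}$ itself induces a subdivision of $K_4$ (the $K_4$ on $\{u,h_1,h_2,h_3\}$ with the missing edge $h_1h_3$ subdivided by $h_4$), contradicting ISK4-freeness with no path-chasing at all. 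Indeed you notice yourself that $\{u,h_1,h_2,h_3\}$ is only a diamond, which contradicts the parenthetical justification you gave for excluding the ``adjacent to all four'' case.

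As a consequence, the substantive part of the lemma is never addressed in your plan. The paper's proof handles it as follows: with $u$ complete to $H=u_1u_2u_3u_4$, the absence of a star-cutset at $u$ yields a chordless path $P=u_1\tp v\tp\cdots\tp u_3$ whose interior avoids $N[u]$; then Lemma~\ref{thm:paw} (no paws, since $G$ has no triangular ISK4, no prism and no star-cutset) forces the first interior vertex $v$ to be adjacent to both $u_2$ and $u_4$, and one concludes that $\{u_1,u_2,u_3,u_4,v\}$ is an ISK4 if $vu_3\notin E(G)$, and that $\{u,u_1,u_2,u_3,u_4,v\}$ is an octahedron if $vu_3\in E(G)$. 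Your outline gestures at the right tools (no-star-cutset paths, the paw lemma's closing trick), but applied to a vacuous configuration; to repair the proof you would need to redo the analysis with $u$ adjacent to all of $H$, which is where the octahedron exclusion in the hypothesis actually gets used.
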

\begin{proof}
  Suppose that $G$ does not have a star-cutset.  Let the vertices of
  $H$ be $u_1, \ldots, u_4$ in this order.  If $u$ is adjacent to only
  three of them, then $V(H)\cup \{u\}$ induces a subdivision of $K_4$.
  So we may assume that $u$ is adjacent to all vertices of $H$.  Since
  $G$ does not admit a star-cutset, the set $\{u\}\cup N(u)\setminus
  \{u_1, u_3\}$ is not a cutset of $G$, so there exists a chordless
  path $P$ with endvertices $u_1, u_3$ such that the interior vertices
  of $P$ are distinct from $u$ and not adjacent to $u$.  Let $P=u_1
  \tp v \tp \cdots \tp u_3$.  Vertex $v$ must be adjacent to $u_2$,
  for otherwise $\{u, u_1, u_2, v\}$ induces a paw, which contradicts
  Lemma~\ref{thm:paw}.  Likewise, $v$ is adjacent to $u_4$.  If $v$ is
  not adjacent to $u_3$, then $\{u_1, u_2, u_3, u_4, v\}$ induces a
  subdivision of $K_4$, a contradiction.  If $v$ is adjacent to $u_3$,
  then $\{u, u_1, u_2, u_3, u_4, v\}$ induces an octahedron, a
  contradiction.
\end{proof}

\begin{lemma}\label{thm:wheel}
  Let $G$ be an ISK4-free graph that does not contain a prism or an
  octahedron.  If $G$ contains a wheel,  then $G$ has a star-cutset or
  a double star cutset.
\end{lemma}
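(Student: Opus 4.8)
The plan is to argue by contradiction, so assume $G$ has neither a star-cutset nor a double star cutset. Since $G$ is ISK4-free it contains no triangular ISK4, so by Lemma~\ref{thm:paw} (using also that $G$ has no prism) $G$ has no paw, and by Lemma~\ref{thm:hu4} $G$ has no wheel whose rim is a $4$-cycle. Fix a wheel $(H,u)$ of $G$ with $|V(H)|$ minimum, write $H = h_1 \tp \cdots \tp h_m \tp h_1$, and put $N = N(u)\cap V(H)$; then $m\ge 5$ and $|N|\ge 3$. The vertices of $N$ cut $H$ into \emph{sectors}, the subpaths of $H$ joining cyclically consecutive elements of $N$ whose interior vertices are non-adjacent to $u$; call a sector \emph{short} if it is an edge and \emph{long} otherwise.

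The first step is a local structure claim: \emph{if two vertices of $N$ are adjacent on $H$, then $|N|=3$ and the three vertices of $N$ are consecutive on $H$} (in particular $u$ is not complete to $H$). This is a short ``no paw'' argument: if $a,b\in N$ with $ab\in E(H)$, then for any third vertex $c\in N$ the set $\{u,a,b,c\}$ induces a paw unless $c$ is a hole-neighbour of $a$ or of $b$, so $N$ lies inside a set of at most four consecutive vertices of $H$; and a run of four consecutive vertices contained in $N$ also produces a paw, because on a hole of length at least five the first and the last of four consecutive vertices are non-adjacent. Hence we are in one of two cases.

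\textbf{Case A: two vertices of $N$ are adjacent on $H$.} Then, up to relabelling, $N = \{h_1,h_2,h_3\}$, the set $V(H)\cup\{u\}$ induces a closed diamond, and $\{h_4,\ldots,h_m\}$ is the interior of the unique long sector. I would first show $N[h_2]\subseteq N[u]$: a vertex $w\notin V(H)$ with $w\in N(h_2)$ and $w\notin N[u]$ would, by ``no paw'' applied to $\{u,h_1,h_2,w\}$ and to $\{u,h_2,h_3,w\}$, be adjacent to both $h_1$ and $h_3$, so $u \tp h_1 \tp w \tp h_3 \tp u$ would be an induced $4$-cycle with hub $h_2$, contradicting Lemma~\ref{thm:hu4}; applying the same argument to the wheel obtained from $H$ by replacing the subpath $h_1 \tp h_2 \tp h_3$ by $h_1 \tp u \tp h_3$ gives $N[u]\subseteq N[h_2]$, so in fact $N[u]=N[h_2]$. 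I would then argue that the star $N[u]$ is a cutset of $G$: otherwise $G\setminus N[u]$ is connected, and a shortest path inside $V(G)\setminus(V(H)\cup N(u))$ from $\{h_4,\ldots,h_m\}$ to the rest, combined with $u$ and arcs of $H$, yields an ISK4, a prism, or (by the $4$-cycle/hub construction above, using the minimality of $|V(H)|$) a forbidden wheel. \textbf{Case B: no two vertices of $N$ are adjacent on $H$,} so every sector is long. If $|N|=3$ then $V(H)\cup\{u\}$ is already an induced subdivision of $K_4$, a contradiction, so $|N|\ge 4$. The candidate cutset is the double star $D = N[u]\cup N[v]$ for a chosen $v\in N$ (legitimate since $uv\in E(G)$): if $D$ is a cutset of $G$ we are done, and otherwise at least two sector interiors survive as distinct pieces of $G\setminus D$ but are joined by a path $Q$ avoiding $D$, and $Q$ together with $u$ and suitable arcs of $H$ induces a subdivision of $K_4$, a prism, or an octahedron.

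The step I expect to be the main obstacle is this last one, present in both cases: the exhaustive analysis of how a path avoiding the candidate (double) star attaches to $H$ and to $u$, and the verification that \emph{every} such attachment forces one of the three forbidden induced subgraphs. Essentially all of the case-checking is concentrated there; the reductions via Lemmas~\ref{thm:paw} and~\ref{thm:hu4}, the ``three consecutive vertices'' claim, and the nesting $N[h_2]=N[u]$ are comparatively routine. A secondary subtlety is keeping track of the degenerate configurations --- short sectors and sectors of length exactly two --- which is precisely what makes a plain star-cutset insufficient in general and forces the double-star-cutset conclusion in Case~B.
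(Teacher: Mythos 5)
There is a genuine gap: the heart of the argument is announced rather than proved. In both of your cases you end with ``a path $Q$ avoiding the candidate (double) star, together with $u$ and suitable arcs of $H$, induces a subdivision of $K_4$, a prism, or an octahedron'', and you yourself flag this exhaustive attachment analysis as ``the main obstacle''. But this is precisely the content of the lemma; nothing in your setup makes it tractable. With $D=N[u]\cup N[v]$ and only the minimality of $|V(H)|$ at your disposal, the connecting path can attach to several sectors, with many neighbours in a single sector, and you have no mechanism to bound or normalize these attachments. The paper's proof supplies exactly such a mechanism: after the same reductions via Lemmas~\ref{thm:paw} and~\ref{thm:hu4}, it abandons minimizing the wheel and instead minimizes a \emph{fan} $(P,x)$ (a chordless path in which $x$ has exactly four neighbours, including both ends), and takes the double star around $\{x,x_2\}$ with the two path-neighbours of $x_2$ deleted. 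Minimality of the fan then forces each end of the connecting path to have at most two neighbours in the relevant subpath (four neighbours would give a smaller fan, three an ISK4), and the remaining finitely many configurations each produce an ISK4 directly; no prism/octahedron case bash is needed at that stage. Without an analogous control step, your Case~B is an unproven claim, not a proof.

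A secondary point: your Case~A is vacuous, and your treatment of it reveals the slip. If the three spokes are consecutive on a hole of length at least five, then $V(H)\cup\{u\}$ induces exactly what you call a closed diamond, and a closed diamond is by definition a subdivision of $K_4$, i.e.\ an ISK4 --- an immediate contradiction with $G$ being ISK4-free (this is the same observation you use in Case~B for $|N|=3$). The page of work you devote to showing $N[u]=N[h_2]$ and then that $N[u]$ is a star cutset is therefore unnecessary, and its final step is again only sketched (``yields an ISK4, a prism, or \ldots a forbidden wheel''). So the preliminary reductions are fine and match the paper, but the proposal as it stands does not contain a proof of the lemma.
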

\begin{proof}
Suppose that the lemma does not hold.  Let $(H, u)$ be a wheel in $G$
such that $|V(H)|$ is minimum.
Let $u_1, \ldots, u_h$ be the neighbors of $u$ in $H$ in this order.
If $h=3$, then $V(H)\cup\{u\}$ induces a subdivision of $K_4$, so we
may assume that $h\ge 4$.  By Lemma~\ref{thm:hu4}, we may assume that
$|V(H)|\ge 5$.  Let us call \emph{fan} any pair $(P, x)$ where $P$ is
a chordless path, $x$ is a vertex not in $P$, and $x$ has exactly four
neighbours in $P$, including the two endvertices of $P$.  Since
$|V(H)|\ge 5$, we may assume up to symmetry that $u_1$ and $u_4$ are
not adjacent.  Letting $Q$ be the subpath of $H$ whose endvertices are
$u_1, u_4$ and which contains $u_2, u_3$, we see that $(Q, u)$ is a
fan.  Since $G$ contains a fan, we may choose a fan $(P, x)$ with a
shortest $P$.  Let $x_1, x_2, x_3, x_4$ be the four neighbours of $x$
in $P$ in this order, where $x_1, x_4$ are the endvertices of $P$.  If
$x_1$ is adjacent to $x_2$, then $\{x, x_1, x_2, x_4\}$ induces a paw,
which contradicts Lemma~\ref{thm:paw}.  So $x_1$ is not adjacent to
$x_2$, and similarly $x_3$ is not adjacent to $x_4$.  Also $x_2$ is
not adjacent to $x_3$, for otherwise $\{x, x_1, x_2, x_3\}$ induces a
paw.  For $i=1, 2, 3$, let $P_i$ be the subpath of $P$ whose
endvertices are $x_i$ and $x_{i+1}$.  Let $x'_2, x''_2$ be the two
neighbours of $x_2$ in $P$, such that $x_1, x'_2, x_2, x''_2, x_3,
x_4$ lie in this order in $P$.


Since $G$ does not admit a double star cutset, the set $\{x, x_2\}\cup
N(x)\cup N(x_2)\setminus \{x'_2, x''_2\}$ is not a cutset, and so
there exists a path $Q=v_1 \tp \cdots \tp v_k$ such that $v_1$ has a
neighbour in the interior of $P_1$, $v_k$ has a neighbour in the
interior of $P_2$, and the vertices of $Q$ are not adjacent to either
$x$ or $x_2$.  We may choose a shortest such path $Q$, so $Q$ is
chordless and its interior vertices have no neighbour in $V(P_1)\cup
V(P_2)$.  If $v_1$ has at least four neighbours in $P_1$, then there
is a subpath $P'_1$ of $P_1$ such that $(P'_1, v_1)$ is a fan, which
contradicts the minimality of $(P, x)$.  If $v_1$ has exactly three
neighbours in $P_1$, then $V(P_1)\cup \{x, v_1\}$ induces a
subdivision of $K_4$.  So $v_1$ has at most two neighbours in $P_1$.
Let $\{y_1, z_1\}$ be the set of neighbours of $v_1$ in $P_1$, such
that $x_1, y_1, z_1, x_2$ lie in this order in $P_1$ (possibly
$y_1=z_1$).  Likewise, $v_k$ has at most two neighbours in $P_2$.  Let
$\{y_2, z_2\}$ be the set of neighbours of $v_k$ in $P_2$, such that
$x_2, y_2, z_2, x_3$ lie in this order in $P_2$ (possibly $y_2=z_2$).

Suppose that $y_1\neq z_1$.  Note that $z_1$ and $z_2$ are not
adjacent, for that would be possible only if $z_1=x_2$ (and
$z_2=x''_2$), which would contradict the definition of $Q$.  Then
$V(P_1) \cup V(z_2-P_2-x_3)\cup V(Q) \cup \{x\}$ induces a subdivision
of $K_4$.  So $y_1=z_1$.  Likewise, $y_2=z_2$.  But, then $V(P_1)\cup
V(P_2)\cup V(Q)\cup \{x\}$ induces a subdivision of $K_4$.
\end{proof}


\section{Decomposition theorems}
\label{sec:th:main}

\subsubsection*{Proof of Theorem~\ref{th:nowheel}}

Let $G$ be a graph that contains no ISK4 and no wheel.  By
Lemma~\ref{l:begin}, we may assume that $G$ contains a $K_{3, 3}$ or a
prism.  Note that $G$ cannot be a thick complete tripartite graph,
because such a graph contains a wheel $K_{1, 2, 2}$.  So if $G$
contains $K_{3, 3}$, then we are done by Lemma~\ref{l:decK33}.  If $G$
contains a prism, then we are done by
Lemma~\ref{l:mainlinegraphdecomp}.

\subsubsection*{Proof of Theorem~\ref{th:main}}

  By Theorem~\ref{th:nowheel}, we can assume that $G$ is either a
  complete bipartite graph, a rich square or contains a wheel.  Note
  that complete bipartite graphs and rich square either are
  series-parallel or admit a star cutset or a double star cutset.  So
  we may assume that $G$ contains a wheel.  If $G$ contains a prism
  then we are done by Lemma~\ref{l:mainlinegraphdecomp}.  So, we
  assume that $G$ contains no prism and in particular no line graph of
  a substantial graph.  If $G$ contains an octahedron, then we are
  done by Lemma~\ref{l:richsquaredecomp}, since an octahedron is a
  rich square.  So we may assume that $G$ contains no prism and no
  octahedron.  Hence, we are done by Lemma~\ref{thm:wheel}.


\section{Chordless graphs}
\label{sec:thm:nochord}

Most of the proof of Theorem~\ref{thm:nochord} is implicitly given
in~\cite{nicolas.kristina:one} (proof of Theorem~2.2 and Claims~12
and~13 in the proof of Theorem~2.4).  But the result is not stated
explicitly in~\cite{nicolas.kristina:one} and many details differ.
For the sake of completeness and clarity we repeat the whole argument
here.

\subsubsection*{Proof of Theorem~\ref{thm:nochord}}

  Let us assume that $G$ has no $1$-cutset and no proper $2$-cutset.
  Note that $G$ contains no $K_4$,  since a $K_4$ is a cycle with two
  chords.  Moreover:
 
\setcounter{claim}{0}

  \begin{claim}
    \label{c:triangle}
    We may assume that $G$ is triangle-free. 
  \end{claim}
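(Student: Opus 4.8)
The goal of Claim~\ref{c:triangle} is to reduce the proof of Theorem~\ref{thm:nochord} to the triangle-free case. So we are in the situation where $G$ is a chordless graph with no $1$-cutset and no proper $2$-cutset, and we want to show we may additionally assume $G$ is triangle-free. The plan is to suppose $G$ contains a triangle $T = \{a,b,c\}$ and show that either $G$ is itself sparse (so the theorem's conclusion already holds), or we reach a contradiction with the assumption that $G$ has no $1$-cutset and no proper $2$-cutset. The key structural fact to exploit is that $G$ is chordless: in a chordless graph a triangle is a very rigid object. Indeed, since $G$ contains no $K_4$ (noted just before the claim, as a $K_4$ is a cycle with two chords), no vertex outside $T$ can have two neighbors in $T$; and more importantly, because every cycle of $G$ is induced, $T$ cannot be a triangle of any larger cycle, which forces strong degree constraints on $a,b,c$.

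**Key steps.**
First I would observe that each edge of $T$, say $ab$, is a branch of $G$ consisting of a single edge that lies in the triangle $abc$; since $G$ is chordless, no other branch can connect $a$ and $b$, and no cycle through the edge $ab$ can avoid using $c$ as well, because such a cycle would have $ac$ or $bc$ as a chord. From this I would deduce that if, say, both $a$ and $b$ have degree at least $3$, then $\{a,b\}$ together with the structure of $G$ yields either a clique-cutset-type configuration or a proper $2$-cutset. More precisely: let $N(a) \setminus \{b,c\}$ and $N(b)\setminus\{a,c\}$ be the "extra" neighborhoods. Any path from $N(a)\setminus\{b,c\}$ to $N(b)\setminus\{a,c\}$ avoiding $\{a,b,c\}$, together with appropriate edges, would create a cycle using two of the triangle edges as chords — contradicting chordlessness — unless that path is routed through $c$. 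So $\{a,b,c\}$ separates the extra neighbors of $a$ from the extra neighbors of $b$, and one then argues that $\{a,b\}$ or $\{a,c\}$ or $\{b,c\}$ is in fact a proper $2$-cutset, or that one of $a,b,c$ is a cutvertex — contradiction in either case. Hence at most one vertex of $T$ can have degree $\geq 3$.

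**Finishing the argument.**
So we may assume at least two of $a,b,c$ — say $b$ and $c$ — have degree exactly $2$. Then $\deg(b)=\deg(c)=2$, meaning $b$'s only neighbors are $a,c$ and $c$'s only neighbors are $a,b$. But then $\{a,b,c\}$ is a connected component of $G$: any neighbor of $a$ outside $T$ would, combined with the edges $ab,ac,bc$, have to lie in some cycle, and tracing that cycle shows $a$ is a cutvertex separating $\{b,c\}$ from the rest unless $G = T$ itself. Since $G$ has no $1$-cutset, either $G$ is exactly the triangle $K_3$ — which is sparse, so the theorem holds outright — or no such triangle exists. This shows that, for the purposes of proving Theorem~\ref{thm:nochord}, we may assume $G$ is triangle-free, which is Claim~\ref{c:triangle}.

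**Main obstacle.**
The delicate point is the case analysis when exactly one vertex of $T$, say $a$, has degree $\geq 3$: one must carefully verify that routing the connecting paths through $c$ (the third triangle vertex) genuinely forces a proper $2$-cutset or a $1$-cutset, using the precise definition of proper $2$-cutset (that neither side is merely an $(a,b)$-path). Handling the degenerate configurations — where a "side" of the putative cutset is too small, or where the only structure attached to $T$ is a single short branch — is where the bookkeeping is easiest to get wrong, and is presumably why the authors phrase the claim as "we may assume $G$ is triangle-free" rather than "$G$ is triangle-free": the genuine triangle-containing chordless graphs with no small cutsets are exactly the sparse ones built around a single triangle with long pendant structure, and these must be checked to already satisfy the sparseness conclusion.
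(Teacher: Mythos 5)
Your high-level plan (a triangle in a chordless graph with no small cutsets forces $G=K_3$, which is sparse) is the right one, but two steps as written do not hold up. First, your justification of the key chordlessness fact is incorrect: a cycle through the edge $ab$ that avoids $c$ does \emph{not} have $ac$ or $bc$ as a chord, since a chord must have both endpoints on the cycle. The correct move is to reroute the cycle through $c$ (replace the edge $ab$ by the path $a \tp c \tp b$); in the rerouted cycle the edge $ab$ itself is the chord. This is exactly the paper's one-line argument: it takes a vertex $a$ of $T$ with a neighbour $x\notin T$, uses only the absence of a $1$-cutset to get a shortest path $P$ in $G\setminus a$ from $x$ to a vertex $b\in T\setminus\{a\}$, and observes that the cycle $a\tp x\tp P\tp b\tp c\tp a$ has the chord $ab$. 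In particular, no degree case analysis and no appeal to proper $2$-cutsets is needed at all for this claim.

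Second, your central case (two triangle vertices of degree at least $3$) is left unproved: you write that ``one then argues'' that $\{a,b\}$, $\{a,c\}$ or $\{b,c\}$ is a proper $2$-cutset or that some vertex of $T$ is a cutvertex. The proper-$2$-cutset alternative is impossible by definition, since a proper $2$-cutset consists of two \emph{non-adjacent} vertices and these pairs are edges of the triangle; so the argument must in fact produce a $1$-cutset, and that derivation (each component of $G\setminus T$ attaches to only one vertex of $T$, which is therefore a cutvertex) is precisely the missing content. The same issue infects your closing picture: there are no ``triangle-containing chordless graphs with long pendant structure'' to check for sparseness, because any pendant attachment makes the triangle vertex it attaches to a $1$-cutset (and your intermediate assertion that a neighbour of $a$ outside $T$ ``has to lie in some cycle'' is unjustified and unnecessary). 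Once the chord argument is fixed, every vertex of $T$ with a neighbour outside $T$ yields a contradiction, and the only surviving case is $G=T$, which is how the paper concludes.
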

  
  \begin{proofclaim}
    For suppose that $G$ contains a triangle $T$.  Then $T$ is a
    maximal clique of $G$ since $G$ contains no $K_4$.  We may assume
    that $G \neq T$ because a triangle is sparse,  and that $G$ is
    connected,  for otherwise every vertex is a $1$-cutset.  So some
    vertex $a$ of $T$ has a neighbor $x$ in $G\setminus T$.  Since $a$
    is not a $1$-cutset of $G$,  there exists a shortest path $P$
    between $x$ and a member $b$ of $T\sm a$.  But,  then $P\cup T$ is a
    cycle with at least one chord (namely $ab$),  a contradiction.
  \end{proofclaim}

  \begin{claim}
    \label{c:cliqueCutset}
    We may assume that $G$ has no clique cutset.
  \end{claim}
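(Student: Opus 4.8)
The plan is to prove this by contradiction, so suppose $G$ has a clique cutset $K$. Since Claim~\ref{c:triangle} lets us assume $G$ is triangle-free and $K$ induces a complete subgraph, we get $|K|\le 2$. Exactly as in the proof of Claim~\ref{c:triangle} we may also assume $G$ is connected, hence $2$-connected since $G$ has no $1$-cutset; in particular $|K|\ne 1$. So the only situation to rule out is $K=\{a,b\}$ with $ab\in E(G)$, and the whole content of the claim is that such an edge cannot be a cutset of a chordless graph that has no $1$-cutset.

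So assume $G\setminus\{a,b\}$ is disconnected and fix two distinct components $C_1,C_2$ of it. The first step is to show that each of $a$ and $b$ has a neighbor in each of $C_1$ and $C_2$. For this, note that $G\setminus a$ is connected (no $1$-cutset), so it contains a path joining $C_1$ to $C_2$; since $C_1$ and $C_2$ are distinct components of $(G\setminus a)\setminus b$, every such path passes through $b$, and it follows that $b$ has a neighbor in $C_1$ and a neighbor in $C_2$. Applying the same argument to $G\setminus b$ gives the corresponding statement for $a$.

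Next, for $i=1,2$, let $P_i$ be a shortest path from $a$ to $b$ all of whose interior vertices lie in $C_i$. Such a path exists because $C_i$ is connected and contains both a neighbor of $a$ and a neighbor of $b$, and it has length at least two, since its interior is nonempty (in particular $P_i$ is not the edge $ab$). Being shortest, $P_i$ is induced in $G[C_i\cup\{a,b\}]$, so the only edge of $G$ among $V(P_i)$ besides the edges of $P_i$ itself is $ab$. Since there is no edge of $G$ between $C_1\setminus\{a,b\}$ and $C_2\setminus\{a,b\}$, we conclude that $P_1\cup P_2$ is a cycle of length at least four, and that the only edge of $G$ joining two non-consecutive vertices of this cycle is $ab$ — which is a genuine chord because $a$ and $b$ are non-consecutive on $P_1\cup P_2$, both $P_i$ having length at least two. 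Thus $G$ contains a cycle with a chord, contradicting chordlessness, and the claim follows.

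I expect the only delicate point to be the bookkeeping in the last paragraph: one must check that no edge of $G$ other than the expected path edges and $ab$ occurs among $V(P_1)\cup V(P_2)$, which is precisely what the ``no edges between $C_1$ and $C_2$'' property together with the inducedness of each $P_i$ provides. Everything else is routine.
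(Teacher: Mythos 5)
Your proof is correct and follows essentially the same route as the paper's: reduce to a cutset $\{a,b\}$ with $ab\in E(G)$ using triangle-freeness and the absence of $1$-cutsets, then join $a$ to $b$ through two different components and observe that the resulting cycle has $ab$ as a chord. (One wording point: define $P_i$ as a shortest $(a,b)$-path in $G[C_i\cup\{a,b\}]$ with the edge $ab$ deleted, so it cannot degenerate to the edge $ab$ itself; in fact the inducedness of the $P_i$ is not even needed, since any two internally disjoint $(a,b)$-paths of length at least two already form a cycle having $ab$ as a chord.)
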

  
  \begin{proofclaim}
    Suppose that $K$ is a clique cutset in $G$.  Since $G$ has no
    cutset of size one and there is no clique of size at least three
    by~(\ref{c:triangle}),  $K$ has exactly two elements $a$ and $b$.
    Let $X$ and $Y$ be two components of $G\sm \{a,  b\}$.  Since none
    of $a$ and $b$ is a 1-cutset of $G$,  $X\cup\{a,  b\}$ contains a
    path $P_X$ with endvertices $a$ and $b$; and a similar path $P_Y$
    exists in $Y\cup\{a, b\}$.  But,  then $P_X\cup P_Y$ forms a cycle
    with at least one chord (namely $ab$),  a contradiction.
  \end{proofclaim}

   We can now prove that $G$ is sparse.  Suppose on the contrary that
   $G$ has two adjacent vertices $a,b$ both of degree at least three.
   Let $c, e$ be two neighbors of $a$ different from $b$, and let $d,
   f$ be two neighbors of $b$ different from $a$.  Note that $\{c,
   e\}$ and $\{d, f\}$ are disjoint by~(\ref{c:triangle}).
   By~(\ref{c:cliqueCutset}), $\{a, b\}$ is not a cutset, so there is
   in $G\setminus\{a, b\}$ a path between $\{c, e\}$ and $\{d, f\}$
   and consequently a path $P$ that contains exactly one of $c, e$ and
   one of $d, f$.  Let the endvertices of $P$ be $e$ and $f$ say.
   Thus $P \cup\{a, b\}$ forms a cycle $C$.  Since $G\setminus\{a,
   b\}$ is connected, there exists a path $Q = c \tp \cdots \tp u$,
   where $u \in P\cup \{b, d\}$ and no interior vertex of $Q$ is in $C
   \cup \{d\}$.  If $u$ is in $\{b, d\}$, then $Q\cup C$ forms a cycle
   with at least one chord, namely $ab$.  So $u\in P$.  Also since $G
   \setminus \{a, b\}$ is connected, there exists a path $R = d \tp
   \cdots \tp v$ where $v \in P \cup Q$ and no interior vertex of $R$
   is in $C \cup Q$.

  If $v$ is in $Q \sm u$, then $bdRvQcaePfb$ is a cycle with at least
  one chord, namely $ab$, a contradiction.  So $v$ is in $P$.  If $e,
  v, u, f$ lie in this order on $P$ and $v\neq u$, then
  $bdRvPeacQuPfb$ is a cycle with at least one chord, namely $ab$, a
  contradiction.  So $e, u, v, f$ lie in this order on $P$ (possibly
  $u=v$).  This restores the symmetry between $c$ and $e$ and between
  $d$ and $f$.  We suppose from here on that the paths $P, Q, R$ are
  chosen subject to the minimality of the length of $uPv$.

  Let $P_e=ePu\sm u$, $Q_c=cQu\sm u$, and $P_b = bPu\sm u$.  We show
  that $\{a, u\}$ is a $2$-cutset of $G$.  Suppose not; so there is a
  path $D=x \tp \cdots \tp y$ in $G \setminus \{a, u\}$ such that $x$
  lies in $P_e \cup Q_c$, $y$ lies in $P_b \cup R$, and no interior
  vertex of $D$ lies in $P\cup \{a\}\cup Q\cup R$.  we may assume up
  to symmetry that $x$ is in $Q_c$.  If $y$ is in the subpath $u \tp P
  \tp v$, then, considering path $Q'=c \tp Q \tp x \tp D \tp y$, we
  see that the three paths $P, Q', R$ contradict the choice of $P, Q,
  R$ because $y$ and $v$ are closer to each other than $u$ and $v$
  along $P$.  So $y$ is not in $uPv$, and so, up to symmetry, $y$ is
  in $R \setminus \{ v\}$.  But, then $x Q a e P f b R y D x$ is a
  cycle with at least one chord (namely $ab$), a contradiction.  This
  proves that we can partition $G\sm\{a, u\}$ into a set $X$ that
  contains $P_e\cup Q_c$ and a set $Y$ that contains $P_b\cup R$ such
  that there is no edge between $X$ and $Y$, so $\{a, u\}$ is a
  $2$-cutset.  So, by~(\ref{c:cliqueCutset}), $a$ and $u$ are not
  adjacent.  This implies that $\{a, u\}$ is proper.



\section{Forbidding wheels}
\label{s:WF3c}

Recall that a \emph{branch} in a graph $G$ is a path of $G$ of length
at least one whose ends are branch vertices and whose internal
vertices are not (so they all have degree~2).  A \emph{subbranch} is a
subpath of a branch.  \emph{Reducing} a subbranch of length at least
two means replacing it by an edge.

\begin{lemma}
  \label{l:WF1}
Let $G$ be a graph that contains no ISK4,  no wheel and no $K_{3, 3}$.
Let $B$ be a subbranch of length at least two in $G$,  and let $G'$ be
the graph obtained from $G$ by reducing $B$.  Then $G'$ contains no
ISK4,  no wheel and no $K_{3, 3}$.
\end{lemma}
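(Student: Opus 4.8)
The plan is to show that reducing a subbranch $B$ cannot create any of the three forbidden configurations. The key observation is that reducing a subbranch of length at least two is, up to homeomorphism, a very mild operation: it keeps the graph $2$-connected where it was, preserves the cycle space structure, and only affects vertices of degree two lying strictly inside $B$. So I would argue by contradiction: suppose $G'$ contains an induced subgraph $F$ that is an ISK4, a wheel, or a $K_{3,3}$. Let $e$ be the new edge of $G'$ replacing $B$, with ends $b_1, b_2$ (the endpoints of the reduced subbranch). If $F$ does not use the edge $e$, then $F$ is already an induced subgraph of $G$ (since reducing $B$ changes no adjacency outside $e$, and deletes only degree-two vertices strictly inside $B$, which therefore belong to no triangle and have both neighbours on $B$), contradicting the hypothesis on $G$. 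So $F$ uses $e$.

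First I would handle the case $F$ is a $K_{3,3}$. Then $e = b_1b_2$ is an edge of $F$, so $b_1, b_2$ lie in opposite sides of the bipartition and each has degree three in $F$. Replacing the edge $e$ by the original subbranch $B$ (a path of length at least two through vertices of degree two in $G$, all outside $F \setminus e$ since those interior vertices are distinct from every vertex of $G'$ other than themselves) turns $F$ into an induced subdivision of $K_{3,3}$ in $G$; but a subdivision of $K_{3,3}$ contains an ISK4 — indeed $K_{3,3}$ contains a subdivision of $K_4$ as a subgraph, hence by minimality (as in Lemma~\ref{l:begin}) any graph containing a subdivision of $K_{3,3}$ contains an ISK4 or a wheel or a $K_{3,3}$; one checks directly that a subdivision of $K_{3,3}$ with at least one edge subdivided contains an ISK4. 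This contradicts the hypothesis on $G$. (Alternatively: $b_1, b_2$ are nonadjacent in $G$, so the interior of $B$ together with the four common neighbours of $b_1,b_2$ in $F$ and two more vertices of $F$ already exhibits an ISK4 in $G$.)

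Next, $F$ an ISK4: here $e$ lies in some branch of $F$, between two corners $c, c'$ of $F$ (possibly $e$ is a whole branch, possibly part of one). Reversing the reduction — replacing $e$ by $B$ — gives a graph $F'$ in $G$ obtained from $F$ by subdividing one of its branches, which is again a subdivision of $K_4$; and $F'$ is induced in $G$ because the interior vertices of $B$ have degree two in $G$, have no neighbour in $V(F)\setminus\{b_1,b_2\}$ (they are anticomplete to it), and lie in no triangle. So $F'$ is an ISK4 of $G$, a contradiction. The case $F$ a wheel is analogous: $e$ is either a spoke or an edge of the rim. If $e$ is an edge of the rim (the hole), replacing $e$ by $B$ subdivides the hole and yields a larger wheel, still induced in $G$ by the same degree-two/triangle-free argument, a contradiction. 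If $e = b_1b_2$ is a spoke, then one of $b_1, b_2$ is the hub $u$ of $F$; but then $b_2$ (or $b_1$) has degree at least three in $G'$, namely two neighbours on the hole adjacent to it plus $u$, whereas if $e$ came from a reduced subbranch then at least one of $b_1,b_2$ had its $e$-degree increased only through the reduction — in fact $b_1$ or $b_2$ is an interior vertex of the original branch in $G$ of degree two, so it cannot be the hub, and cannot be a rim vertex of degree three either; replacing $e$ by $B$ then gives a wheel with a subdivided spoke, i.e.\ a subdivision of a wheel, which contains an ISK4 (a wheel $(H,u)$ with a subdivided spoke contains an ISK4 obtained from $u$, three of its neighbours on $H$, and $H$), a contradiction with the hypothesis on $G$.

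The main obstacle is the bookkeeping in the wheel case, specifically ruling out that the new edge $e$ is a spoke incident to the hub: one must use that an interior vertex of the reduced subbranch has degree exactly two in $G$ and is in no triangle, hence cannot play the role of the hub nor of a degree-$\ge 3$ rim vertex, and then translate "subdivision of a wheel" into "contains an ISK4". Everything else is a routine check that the deleted degree-two vertices of $B$ are irrelevant to inducedness. Note the hypothesis $G$ has no $K_{3,3}$ is genuinely needed to keep the induction clean, which is exactly why it appears in the statement.
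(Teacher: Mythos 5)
Your treatment of the ISK4 case, of the wheel case when $e$ lies on the rim, and (in substance) of the $K_{3,3}$ case is correct and matches the paper's proof: for $K_{3,3}$ the decisive observation is exactly your ``direct check'', namely that $K_{3,3}$ minus an edge is itself a subdivision of $K_4$, so $G[V(F)]$ is an ISK4 of $G$; the detour through Lemma~\ref{l:begin} is neither needed nor quite valid, since that lemma also allows the outcome ``prism'', which your hypotheses do not exclude. The genuine gap is in the spoke case of the wheel argument.

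There are two incorrect steps there. First, the claim that one of $b_1,b_2$ ``is an interior vertex of the original branch in $G$ of degree two'' is false: a subbranch may be an entire branch, in which case both of its ends are branch-vertices of degree at least three in $G$ (and the reduction does not change the degrees of $b_1,b_2$), so the possibility that $e$ is a spoke, with $b_1$ the hub and $b_2$ a rim vertex, cannot be excluded and must be dealt with. Second, your fallback --- that a wheel with a subdivided spoke contains an ISK4 ``obtained from $u$, three of its neighbours on $H$, and $H$'' --- is wrong as soon as the hub keeps at least four neighbours on $H$ (i.e.\ $h\ge 5$): any fourth neighbour of $u$ on $H$ is then an interior vertex of a branch of the purported subdivision and is adjacent to the corner $u$, and in fact such a configuration (a long hole with four spread-out spokes plus one subdivided spoke) need contain no ISK4 at all --- this is precisely why wheels have to be forbidden separately from ISK4s throughout the paper. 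The correct, and much simpler, conclusion (the one the paper draws) is that in $G$ the hub $u$ is still adjacent to the $h-1\ge 3$ vertices $x_1,\dots,x_{h-1}$ of the hole $H$, so $G[V(H)\cup\{u\}]$ is a wheel of $G$ when $h\ge 5$ (and an ISK4 when $h=4$), contradicting the hypothesis on $G$; the contradiction comes from wheel-freeness, not from ISK4-freeness, and no replacement of $e$ by $B$ is needed in this case.
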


\begin{proof}
Let $e$ be the edge of $G'$ that results from the reduction of $B$.

Suppose that $G'$ contains an ISK4 $H$.  Then $H$ must contain $e$, 
for otherwise $H$ is an ISK4 in $G$.  Then replacing $e$ by $B$ in $H$
yields an ISK4 in $G$,  a contradiction.

Now suppose that $G'$ contains a wheel $W=(H,  x)$.  Let $x_1,  \ldots, 
x_h$ be the neighbors of $x$ in $H$,  with $h\ge 4$.  Then $W$ must
contain $e$,  for otherwise $W$ is a wheel in $G$.  Suppose that $e$ is
an edge in $H$.  Then replacing $e$ by $B$ in $H$ yields a wheel in
$G$ (with hub $x$ and the same number of spokes),  a contradiction.
Now suppose that $e=xx_h$.  So,  in $G$,  vertices $x$ and $x_h$ are the
endvertices of $B$ and they are not adjacent.  If $h\ge 5$,  then $(H, 
x)$ induces a wheel in $G$ (with the same hub and with $h-1$ spokes).
If $h=4$,  then $V(H)\cup\{x\}$ induces an ISK4 in $G$,  a
contradiction.

Finally, suppose that $G'$ contains a $K_{3, 3}$ $H$.  Then $H$ must
contain $e$, for otherwise $H$ is a $K_{3, 3}$ in $G$.  Let $e=xy$.
Then $x$ and $y$ are the endvertices of $B$ in $G$ and they are not
adjacent, so $V(H)$ induces an ISK4 in $G$, a contradiction.
\end{proof}

Note that the converse of Lemma~\ref{l:WF1} is not true.  Let $G$ be
the graph with vertices $x_1, \ldots, x_7$ such that $x_1, \ldots,
x_5$ induce a hole in this order, $x_6$ is adjacent to $x_1, x_3,
x_5$, and $x_7$ is adjacent to $x_2, x_4$.  Then $x_2$-$x_7$-$x_4$ is
a branch whose reduction yields the prism on six vertices, a graph
that contains no ISK4, no wheel and no $K_{3, 3}$.  But $G$ contains
an ISK4.

The following result is well-known.  See \cite{seymour:sp} for a
simple greedy coloring algorithm.

\begin{lemma}[Dirac,  \cite{dirac:SP}]
  \label{l:WF3c-sp}
Let $G$ be a series-parallel graph.  Then $G$ is $3$-colorable.
\end{lemma}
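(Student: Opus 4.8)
The plan is to prove the statement by induction on $|V(G)|$, using the standard fact that series-parallel graphs contain a vertex of small degree. If $G$ has no edge the claim is immediate, so assume $|V(G)|\ge 2$ and that $3$-colorability is already known for all series-parallel graphs with fewer vertices.

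I would first record two auxiliary facts. (a) The class of series-parallel graphs is closed under taking subgraphs: by Theorem~\ref{th:dirac}, being series-parallel is equivalent to containing no (possibly non-induced) subdivision of $K_4$, and that property is clearly inherited by subgraphs; in particular $G-v$ is series-parallel for every $v\in V(G)$. (b) Every series-parallel graph with at least one vertex has a vertex of degree at most $2$. To prove (b) I would use the recursive definition: view $G$ as a multigraph arising from a forest $F$ by a sequence of operations, each either adding an edge parallel to an existing one --- which does not change the underlying simple graph --- or subdividing an edge $xy$ --- which, since the multigraphs in play have no loops, creates a new vertex whose only neighbors are the distinct vertices $x$ and $y$, hence a vertex of degree exactly $2$ in the simple graph. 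If the sequence uses at least one subdivision, the vertex created by the last subdivision has degree $2$ in the simple graph at that stage, and the subsequent operations (all parallel-edge additions) leave the simple graph unchanged, so this vertex has degree $2$ in $G$. If the sequence uses no subdivision, then $G=F$ is a forest and trivially has a vertex of degree at most $1$. (Alternatively, one may quote the classical fact that every graph of minimum degree at least $3$ contains a subdivision of $K_4$; together with Theorem~\ref{th:dirac} this gives (b) at once.)

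The induction is then routine. Pick $v\in V(G)$ with $\deg_G(v)\le 2$ using (b). By (a) the graph $G-v$ is series-parallel, so by the induction hypothesis it has a proper coloring with colors in $\{1,2,3\}$. Since $N(v)$ meets at most two of the three color classes, one color remains available for $v$, and assigning it extends the coloring to all of $G$.

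The only ingredient that is not pure bookkeeping is fact (b) --- the existence of a vertex of degree at most $2$ --- and this is where Theorem~\ref{th:dirac} (or a direct analysis of the series-parallel construction) does the real work; the rest is the familiar greedy argument, which is also the basis of the linear-time coloring algorithm of \cite{seymour:sp}.
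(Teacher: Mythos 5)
Your proof is correct. The paper does not actually prove this lemma---it quotes it as a classical result of Dirac and points to \cite{seymour:sp} for a simple greedy coloring algorithm---and your argument (every series-parallel graph has a vertex of degree at most two, by Theorem~\ref{th:dirac} or the recursive construction; delete it, color inductively, extend greedily) is exactly the standard argument behind that citation, so your write-up simply supplies the details the paper omits.
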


\begin{lemma}
  \label{l:WF3c-rs}
  Let $G$ be a rich square that contains no wheel.  Then $G$ is
  $3$-colorable.
\end{lemma}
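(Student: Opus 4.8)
The plan is to invoke the structural remark made just before Theorem~\ref{th:nowheel}: a rich square is wheel-free if and only if it is long, so our $G$ is a \emph{long} rich square. Fix a central square $S=\{u_1,u_2,u_3,u_4\}$ of $G$, with the $u_i$ in this cyclic order, so that $u_1u_2,u_2u_3,u_3u_4,u_4u_1$ are its edges. Then $V(G)$ is the disjoint union of $V(S)$ and the vertex sets of the components $B_1,\dots,B_k$ of $G\setminus S$, each $B_i$ is a long link of $S$ (an induced path whose two ends attach to two opposite edges of $S$ and whose interior has no neighbour in $S$), and there is no edge between $B_i$ and $B_j$ for $i\neq j$. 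Hence, apart from the edges of $S$, every edge of $G$ lies inside some $B_i$ or joins an end of some $B_i$ to its prescribed neighbours in $S$.

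The idea is to colour $S$ with the three colours asymmetrically, as $c(u_1)=1$, $c(u_2)=2$, $c(u_3)=3$, $c(u_4)=2$; this is a proper colouring of the $4$-cycle $S$. The two edges of $S$ incident with $u_1$ then have colour set $\{1,2\}$ on their endpoints, while the two edges incident with $u_3$ have colour set $\{2,3\}$. Since a long link attaches to a pair of opposite edges of the square, and of any two opposite edges of $S$ exactly one meets $u_1$ and the other meets $u_3$, it follows that for each $B_i$ one end sees only colours $\{1,2\}$ in $S$ (so it must be coloured $3$) and the other end sees only colours $\{2,3\}$ (so it must be coloured $1$); assign these forced colours. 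I expect the only slightly fiddly point to be this book-keeping (that the asymmetric colouring of $S$ leaves a single admissible colour at each end of each link), and it is the reason we use colours $1,2,3,2$ on $S$ rather than a $2$-colouring: a $2$-colouring of $S$ would force both ends of a length-one link to the same colour.

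Finally I would colour the interior of each link. As $B_i$ is an induced path whose interior vertices (and whose end vertices, apart from their neighbours in $S$) have neighbours only inside $B_i$, it remains only to extend the colouring to the path $B_i$ given that its two ends already carry the distinct colours $3$ and $1$; this is elementary — if $B_i$ has length one the ends are simply adjacent and already differently coloured, and otherwise one colours along the path, spending the third colour once near the far end if needed to reach the prescribed colour. Carrying this out for every link produces a proper $3$-colouring of $G$: edges inside $S$ and inside the $B_i$ are proper by construction, edges from a link end to $S$ are proper because that end was given a colour absent from its $S$-neighbourhood, and $G$ has no further edges. So there is no real obstacle; the work is entirely in setting up the asymmetric colouring of $S$ and checking the few cases.
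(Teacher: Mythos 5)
Your proof is correct and is essentially the paper's own argument: the same asymmetric colouring $1,2,3,2$ of the central square, the same forced colours $3$ and $1$ at the two ends of each (necessarily long) link, and a greedy/alternating extension along the interior of each link. The only cosmetic difference is that you cite the remark that a wheel-free rich square is long, whereas the paper re-derives that fact inside the proof by noting a short link would create a wheel with four spokes.
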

\begin{proof}
 By the definition of a rich square, there is a square $S = \{u_1,
 u_2, u_3, u_4\}$ in $G$ such that every component of $G\setminus S$
 is a link of $S$.  We make a $3$-coloring of the vertices of $G$ as
 follows.  Assign color~$1$ to $u_1$, color~$2$ to $u_2$ and $u_4$,
 and color~$3$ to $u_3$.  Let $P$ be any component of $G\setminus S$.
 So $P$ is a path $p_1 \tp \cdots \tp p_t$.  Note that $t\ge 2$, for
 otherwise $S\cup \{p_1\}$ would induce a wheel (with four spokes).
 We may assume that $N_S(p_1) = \{u_1, u_2\}$ or $\{u_1, u_4\}$ and
 $N_S(p_t) = \{u_3, u_4 \}$ or $\{u_2, u_3\}$.  In either case, assign
 color~$3$ to $p_1$, color $1$ to $p_t$, and, if $t\ge 3$, assign
 colors $2$ and $3$ alternately to $p_2, \ldots, p_{t-1}$.  Repeating
 this for every link produces a $3$-coloring of the vertices of $G$.
\end{proof}

Note that Lemma~\ref{l:WF3c-rs} is tight, in the sense that a rich
square may fail to be $3$-colorable, as shown by the graph on Figure
\ref{fig:C6barPlusV}.  The following result also is tight since the
graph represented on Figure~\ref{fig:C6barPlusV} is a line graph.  The
line graph of the Petersen graph is another example of a line graph of
a cubic graph whose chromatic number is~4.

\begin{figure}[h]
      \centering
\includegraphics[scale=0.5]{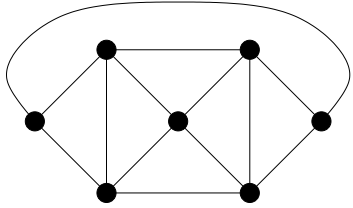}
\caption{Example of a rich square with chromatic number 4}
\label{fig:C6barPlusV}
    \end{figure}

\begin{lemma}
  \label{l:WF3c-lg}
  Let $G$ be a graph that contains no ISK4,  no wheel and such that $G$
  is a line graph.  Then $G$ is $3$-colorable.
\end{lemma}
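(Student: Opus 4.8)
The plan is to reduce the problem to edge-colouring. Write $G=L(R)$ for some graph $R$ without isolated vertices. Since $G$ is wheel-free, $R$ is chordless (this equivalence is recorded in the introduction), and since $G$ is ISK4-free, $R$ has maximum degree at most three: any four edges incident to a vertex of degree at least four would induce a $K_4$ in $G$, which is an ISK4, contradicting that $G$ is ISK4-free. A proper $3$-colouring of $G=L(R)$ is exactly a proper $3$-edge-colouring of $R$, so it suffices to show that $R$ is $3$-edge-colourable. I would argue by induction on $|V(G)|=|E(R)|$, using that if $R'$ is an induced subgraph of $R$ then $L(R')$ is an induced subgraph of $G$, hence again an \{ISK4, wheel\}-free line graph. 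Handling components of $G$ separately, we may assume $R$ connected. By Theorem~\ref{thm:nochord}, either $R$ is sparse, or $R$ has a $1$-cutset, or $R$ has a proper $2$-cutset.

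If $R$ has a $1$-cutset $\{v\}$, write $R=R_1\cup R_2$ with $R_1\cap R_2=\{v\}$, where $R_1,R_2$ are induced subgraphs of $R$, each with fewer edges than $R$. By induction each $R_i$ is $3$-edge-colourable, and since $\deg_{R_1}(v)+\deg_{R_2}(v)=\deg_R(v)\le 3$, after permuting colours the two colourings use disjoint colour sets on the edges at $v$; their union colours $R$. Now suppose $R$ has a proper $2$-cutset $\{a,b\}$, with $V(R)\setminus\{a,b\}=X\sqcup Y$, no edge between $X$ and $Y$, and neither $R_X=R[X\cup\{a,b\}]$ nor $R_Y=R[Y\cup\{a,b\}]$ an $(a,b)$-path. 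Again $R_X$ and $R_Y$ are smaller members of the class, hence $3$-edge-colourable. Since $ab\notin E(R)$ and $\Delta(R)\le 3$, at most one of $R_X,R_Y$ has two edges at $a$ and at most one has two edges at $b$, so gluing two given colourings only needs that, at each of $a$ and $b$, the colourings use disjoint colour sets. A single pair of colourings need not allow this, because of a parity obstruction: in the piece having two edges at both $a$ and $b$, whether the \emph{free} colour at $a$ equals the free colour at $b$ is a permutation invariant, and symmetrically whether the two pendant edges get the same colour in the other piece. I would get around this by strengthening the induction hypothesis so that it also describes the colour patterns realisable at a pair of non-adjacent vertices; the key fact is that a member of the class that is not an $(a,b)$-path and in which $a,b$ have degree one has a $3$-edge-colouring with the two pendant edges equally coloured, and one with them differently coloured. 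By the definition of a proper $2$-cutset this flexibility is present in whichever of $R_X,R_Y$ carries the pendant edges at $a$ and $b$, and it makes the gluing succeed. I expect this $2$-cutset step, together with the bookkeeping for the strengthened hypothesis, to be the main obstacle.

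It remains to treat the base case, which I would isolate as: every sparse graph $R$ with $\Delta(R)\le 3$ is $3$-edge-colourable (for such $R$ the vertices of degree $3$ form a stable set). Induct on $|E(R)|$. If $R$ has a vertex of degree at most $1$, delete its incident edge, colour, and re-insert it with one of its at least two free colours. If $\Delta(R)\le 2$, then $R$ is a disjoint union of paths and cycles, hence $3$-edge-colourable. Otherwise choose a vertex $w$ of degree $3$ with neighbours $x_1,x_2,x_3$, all of degree at most $2$; colour $R-w$ (still sparse of maximum degree $3$, with fewer edges) and extend to $wx_1,wx_2,wx_3$. Each $x_i$ retains at most one coloured edge, so each $wx_i$ has a list of at least two admissible colours; the three edges $wx_1,wx_2,wx_3$ need pairwise distinct colours, and three lists of size $\ge 2$ inside a palette of $3$ have a system of distinct representatives unless all three are the same $2$-element set. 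In that exceptional case one Kempe interchange on the two colours consisting of the common colour $c'$ of the three retained edges and one other colour $c$, applied to the $\{c,c'\}$-component containing $x_1$'s retained edge, recolours that edge and possibly one further retained edge but no more (each $x_i$ is an endpoint of its $\{c,c'\}$-chain and a chain has only two ends), after which the three lists do admit a system of distinct representatives. This extends the colouring to $R$ and completes the induction.
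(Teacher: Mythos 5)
Your overall skeleton is the same as the paper's: pass to the root graph $R$ with $\Delta(R)\le 3$, observe that wheel-freeness of $G$ makes $R$ chordless, and induct using Theorem~\ref{thm:nochord} (sparse / $1$-cutset / proper $2$-cutset). Your sparse base case (greedy plus one Kempe interchange) is correct and is a legitimate alternative to the paper's one-line appeal to Brooks' theorem, and the cutvertex case is fine. The problem is the proper $2$-cutset case, which you yourself flag as ``the main obstacle'' and do not actually carry out. The ``key fact'' you propose to add to the induction hypothesis --- that any member of the class which is not an $(a,b)$-path and in which $a,b$ have degree one admits both a $3$-edge-colouring with the two pendant edges equally coloured and one with them differently coloured --- is false as stated. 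Take $R_X$ to be a triangle $a_1b_1x$ with pendant vertices $a$ adjacent to $a_1$ and $b$ adjacent to $b_1$: it is chordless, has maximum degree $3$, is not an $(a,b)$-path, yet in every $3$-edge-colouring the edges $aa_1$ and $bb_1$ receive distinct colours (if $a_1b_1$ gets colour $1$, then $\{aa_1,a_1x\}=\{2,3\}=\{bb_1,b_1x\}$ and $a_1x\neq b_1x$ forces $aa_1\neq bb_1$). To rescue your plan you would have to show that such rigid pieces cannot occur as a side of a proper $2$-cutset of a chordless $2$-connected graph (here, the edge $a_1b_1$ together with an $(a,b)$-path through the other side would create a chorded cycle), or otherwise match the realisable patterns on the two sides; and you would also need to treat the mixed degree split ($a$ having two edges in one side and $b$ two edges in the other), which your ``pendant piece versus two-edge piece'' dichotomy does not cover. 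None of this bookkeeping is done, so the induction does not close.

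For comparison, the paper avoids the flexibility question entirely with a marker-vertex construction: it first arranges (possibly replacing one vertex of the cutset) that both $a$ and $b$ have exactly one neighbour $a_1,b_1$ in the chosen component $A_1$, then colours $H_1=A_1$ plus a new vertex $x_1$ adjacent to $a_1,b_1$, and $H_2=H\setminus A_1$ plus a new vertex $x_2$ adjacent to $a,b$. Both auxiliary graphs are shown to be chordless (a chorded cycle through the marker would lift, via a chordless $(a,b)$-path in the other side, to a chorded cycle of $H$) and smaller, so induction applies; and the markers force the two boundary colours to be \emph{distinct} on both sides, so a single permutation of colours makes the two colourings agree on the edges $aa_1$ and $bb_1$. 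In effect the paper realises the ``different/different'' pattern on both sides simultaneously instead of asking either side to be flexible, which is exactly the step missing from your argument.
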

\begin{proof}
  Let $G$ be the line graph of $H$.  So we need only to prove that $H$
  is $3$-edge-colorable.  Since $G$ contains no ISK4, in particular it
  contains no $K_4$, so $H$ has maximum degree at most three.  If $C$
  is a cycle of length at least four in $H$ and $e$ is a chord of $C$,
  then the edges of $C$ plus edge $e$ are vertices of $G$ that induce
  a wheel in $G$ (with hub $e$ and four spokes), a contradiction.  So
  every cycle of $H$ is chordless.  By Theorem~\ref{thm:nochord}, one
  of the following holds: \\
  (a) The vertices of $H$ of degree at least $3$ are pairwise
  non-adjacent; \\
  (b) $H$ has a  cutvertex; \\
  (c) $H$ has a proper $2$-cutset.\\
 
  We prove that our graph $H$ is $3$-edge-colorable in each case.

  (a) Let $f=xy$ be any edge of $H$.  Since $H$ satisfies (a), we may
  assume that $x$ has degree at most two and $y$ has degree at most
  three in $H$.  Thus, in $G$, vertex $f$ has degree at most three.
  It follows from the theorem of Brooks \cite{brooks} that $G$ is
  $3$-colorable (and so $H$ is $3$-edge-colorable).
 
  (b) Let $x$ be a cutvertex of $H$.  Let $A_1, \ldots, A_k$ be the
  components of $H\setminus x$, and let $H_i$ be the subgraph of $H$
  induced by $V(A_i)\cup\{x\}$ for each $i=1, \ldots, k$.  Since $H$
  is connected, $x$ has a neighbor in each $A_i$, and we have $k\le 3$
  since $H$ has maximum degree at most $3$.  By the induction
  hypothesis, each $H_i$ admits a $3$-edge-coloring.  Up to renaming
  some color classes, we can combine these colorings so that the
  colors used at $x$ are different; thus we obtain a $3$-edge-coloring
  for $H$.

  (c) Let $A_1, \ldots, A_k$ be the components of $H\setminus \{a,
  b\}$.  We may assume that we are not in case (b), so $H$ is
  $2$-connected and each of $a$ and $b$ has a neighbor in $A_i$ for
  each $i=1, \ldots, k$.  Since $H$ has maximum degree at most $3$, we
  may assume up to symmetry that $a$ has only one neighbor $a_1$ in
  $A_1$.  Suppose that $b$ has two neighbors in $A_1$.  Then $k=2$ and
  $b$ has only one neighbor $b_2$ in $A_2$, and , then $\{a, b_2\}$ is
  also a proper $2$-cutset of $H$.  Thus in any case we may assume
  that both $a, b$ have only one neighbor in $A_1$.  Let $b_1$ be the
  neighbor of $b$ in $A_1$.  Let $H_1$ be the graph obtained from
  $A_1$ by adding a vertex $x_1$ adjacent to $a_1$ and $b_1$.  Let
  $H_2$ be the graph obtained from $H\setminus A_1$ by adding a vertex
  $x_2$ adjacent to $a$ and $b$.  Suppose that $H_1$ contains a cycle
  $C$ that has a chord.  Then $C$ must contains $x_1$.  Since $H$ is
  $2$-connected there exists a chordless path $P$ with endvertices $a$
  and $b$ in $H\setminus A_1$.  Then $(C\setminus x)\cup P$ is a cycle
  with a chord in $H$, a contradiction.  So every cycle in $H_1$ is
  chordless.  By a similar argument, every cycle in $H_2$ is
  chordless.  Note that $H_1$ and $H_2$ have strictly fewer vertices
  than $H$ because the cutset $\{a, b\}$ is proper.  By the induction
  hypothesis, $H_1$ and $H_2$ have a $3$-edge-coloring.  In the
  coloring of $H_1$, edges $x_1a_1$ and $x_1b_1$ have different
  colors, and in the coloring of $H_2$ edges $x_2a$ and $x_2b$ have
  different colors too, so we can combine these colorings to make a
  $3$-edge-coloring for $H$.
\end{proof}

\label{sec:l:WF3c}
\subsubsection*{Proof of Theorem~\ref{l:WF3c}}

We prove the theorem by induction on the number of vertices of $G$.
Suppose that $G$ has a clique cutset $K$.  So $V(G)\setminus K$ can be
partitioned into two sets $X, Y$ such that there is no edge between
them.  Since $G$ contains no ISK4,  we have $|K|\le 3$.  By the
induction hypothesis,  the two subgraphs of $G$ induced by $X\cup K$
and $Y\cup K$ are $3$-colorable.  We can combine $3$-colorings of
these subgraphs so that they coincide on $K$,  and consequently we
obtain a $3$-coloring of $G$.  Now we may assume that $G$ has no
clique cutset.  If $G$ contains a $K_{3, 3}$,  then,  by
Lemma~\ref{l:decK33},  $G$ is a complete bipartite (recall that a thick
complete tripartite graph contains a wheel),  so it is $3$-colorable.
Now we may assume that $G$ contains no $K_{3, 3}$.

Suppose that $G$ has a $2$-cutset $\{a,  b\}$.  So $V(G)\setminus K$
can be partitioned into two sets $X, Y$ such that there is no edge
between them.  Since $G$ has no clique cutset,  it is $2$-connected,  so there
exists a chordless path $P_Y$ with endvertices $a$ and $b$ and with
interior vertices in $Y$.  Let $G'_X$ be the subgraph of $G$ induced
by $X\cup V(P_Y)$.  Note that $P_Y$ is a subbranch in $G'_X$.  Let
$G''_X$ be obtained from $G'_X$ be reducing $P_Y$ (thus $a$ and $b$
are adjacent in $G''_X$).  Define a graph $G''_Y$ similarly.  Since
$G'_X$ is an induced subgraph of $G$,  it contains no ISK4,  no wheel
and no $K_{3, 3}$.  So,  by Lemma~\ref{l:WF1},  $G''_X$ contains no ISK4, 
no wheel,  and no $K_{3, 3}$.  The same holds for $G''_Y$.  By the
induction hypothesis,  $G''_X$ and $G''_Y$ admit a $3$-coloring.  We
can combine these two $3$-colorings so that they coincide on $\{a, 
b\}$,  and consequently we obtain a $3$-coloring of $G$.

Now we may assume that $G$ contains no $2$-cutset.  By
Theorem~\ref{th:nowheel}, $G$ is either a series-parallel graph, a
rich square, a line graph, or a complete bipartite graph.
 Then the desired result follows from Lemmas~\ref{l:WF3c-sp},
\ref{l:WF3c-rs}, \ref{l:WF3c-lg}, and the fact that bipartite graphs
are 3-colorable.


\section{Algorithms for \{ISK4, wheel\}-free and chordless graphs }
\label{sec:algo}

In this section,  we give two algorithms for the class of \{ISK4, 
wheel\}-free graphs.  The first one is a recognition algorithm for
that class and the second is a coloring algorithm.  Both are based on
the results proved in the preceding sections.

    \subsection{Recognizing \{ISK4, wheel\}-free graphs}

    The recognition algorithm is based on Theorem~\ref{th:nowheel}: if
    a graph $G$ is \{ISK4, wheel\}-free, then either $G$ has a
    clique-cutset or a proper $2$-cutset, or $G$ is of one of the
    following four types: $G$ is series-parallel, $G$ is the
    line graph of a chordless graph with maximum degree at most three,
    $G$ is a complete bipartite graph, or $G$ is a rich square.  We
    analyze each of these cases separately.  Let us assume that $G$
    has $n$ vertices and $m$ edges.

    Suppose that $G$ has a clique cutset $K$.  So $V(G)\setminus K$
    can be partitioned into two sets $X, Y$ such that there is no edge
    between them.  Let $G_X$ and $G_Y$ be the subgraphs of $G$ induced
    by $X\cup K$ and $Y\cup K$.  We consider that $G$ is decomposed
    into $G_X$ and $G_Y$.  These subgraphs can in turn be decomposed
    along clique cutsets.  This is applied as long as possible, which
    yields a {\it clique cutset decomposition tree} $T_{cc}(G)$ of
    $G$.  Building such a tree can be done in time $O(n+m)$,
    see~\cite{Tar, Whi}.  If any clique cutset found during this step
    has size at least four, we stop with the obvious answer ``$G$ is
    not ISK4-free''.  Therefore let us assume that all the clique
    cutsets found by the algorithm have size at most three.  Note that
    a graph that is either a subdivision of $K_4$ or a wheel has no
    clique cutset.  It follows that $G$ is \{ISK4, wheel\}-free if and
    only if all leaves of $T_{cc}$ are \{ISK4, wheel\}-free.  So our
    algorithm proceeds with examining the leaves of the tree.

    Now suppose that $G$ has no clique cutset and has a proper
    $2$-cutset $\{a, b\}$.  So $V(G)\setminus \{a, b\}$ can be
    partitioned into two sets $X, Y$ such that there is no edge
    between them and each of $G[X \cup \{ a, b \}]$ and $G[Y \cup \{
    a, b \}]$ is not an $(a, b)$-path.  Let $G_X$ be the subgraph of
    $G$ induced by $X\cup \{a, b\}$ plus an artificial vertex adjacent
    to $a$ and $b$, and define $G_Y$ similarly.  Thus $G$ is
    decomposed into graphs $G_X$ and $G_Y$.  Note that $G_X$ and $G_Y$
    have fewer vertices than $G$ (because $\{a, b\}$ is proper), and
    that they have no clique cutset (because such a set would also be
    a clique cutset of $G$).  These subgraphs can in turn be
    decomposed along proper $2$-cutsets, and this is applied as long
    as possible, which yields a proper $2$-cutset decomposition tree
    $T_{2c}$ of $G$.  Note that a graph that is either a subdivision
    of $K_4$ or a wheel has no proper $2$-cutset.  It follows that $G$
    is \{ISK4, wheel\}-free if and only if all leaves of $T_{2c}$ are
    \{ISK4, wheel\}-free.  So our algorithm proceeds with examining
    the leaves of the tree.

    Let $T$ be the decomposition tree that is obtained by combining
    $T_{cc}(G)$ and the $T_{2c}$'s of all leaves of $T_{cc}$.  We show
    that $T$ has $O(n)$ nodes.  To do this, we define for every graph
    $H$ the function $f(H) = |V(H)|-4$.  Suppose that $G$ is
    decomposed by a cutset $K$ into subgraphs $G_X, G_Y$ as above,
    where $K$ is either a clique cutset of size at most three or a
    proper $2$-cutset.  If $K$ is a clique cutset, then we have
    $f(G_X) = |X| + |K| - 4$, $f(G_Y) = |Y| + |K| - 4$, and $f(G) =
    |X| +|Y| + |K| - 4$.  It follows (because $|K|\le 3$) that $f(G_X)
    + f(G_Y) \le f(G)$.  If $K$ is a proper $2$-cutset, then we have
    $f(G_X) = |X| + 3- 4$, $f(G_Y) = |Y| + 3- 4$, and $f(G) = |X| +
    |Y| + 2- 4$.  It follows again that $f(G_X) + f(G_Y) \le f(G)$.
    Let $T^*$ be the subtree of $T$ induced by the nodes that are
    graphs with at least five vertices.  Applying the above inequality
    recursively, and letting $G_1, \ldots, G_{\ell}$ be the leaves of
    $T^*$, we obtain that $f(G_1)+\cdots +f(G_\ell)\le f(G)$.  Since
    all $G_i$'s satisfy $f(G_i)>0$, we obtain $\ell \le n$.
    Consequently, $T^*$ has at most $2n-1$ nodes.  In addition, each
    node of $T$ with at least five vertices may have one or two
    children with at most four vertices.  Moreover, the size of the
    decomposition tree of graphs with at most four vertices is bounded
    by a constant.  So $T$ has $O(n)$ leaves.  Recall that the leaves
    have fewer vertices than $G$.

    Now we show that $T$ can be constructed in time $O(n^2m)$.
    Because proper 2-cutset can be found in time $O(nm)$ as follows:
    for any vertices $v$, find the cut vertices and the blocks of
    $G\sm v$ by using DFS.  For any such block, check whether the
    corresponding cutvertex $u$ is such that $\{u, v\}$ is a proper
    2-cutset.  Thus, building the tree can be done by running $O(n)$
    times this subroutine (or the routine that finds a clique cutset)
    and therefore takes time $O(n^2m)$.

    Now suppose that $G$ has no clique cutset and no proper
    $2$-cutset.  Theorem~\ref{th:nowheel} implies that if $G$ contains
    no induced subdivision of $K_4$ and no wheel, then $G$ must be
    either (i) series-parallel, or (ii) a complete bipartite graph, or
    (iii) a long rich square or (iv) the line graph of a chordless
    graph $H$ with maximum degree at most three.  The converse is also
    true, namely, if $G$ satisfies one of (i)--(iv), then it contains
    no ISK4 and no wheel (this is easy to check and we omit the
    details).  So our algorithm needs only test if $G$ is of one of
    the four types.

    Testing (i) can be done in time $O(n+m)$,  see~\cite{VTL}.

    Testing (ii) can be done by checking with breadth-first search
    whether $G$ is bipartite, and, then checking whether any two
    vertices on different sides of the bipartition are adjacent.  This
    takes time $O(m+n)$.

    To test (iii), note that if $G$ is a rich square and contains no
    wheel, then $G$ has exactly four vertices of degree at least four
    (the four vertices of the central square) and all other vertices
    have degree three or two.  So we need only identify the four
    vertices of largest degree, check whether they induce a square
    $S$, and, then check whether each component of $G\setminus S$ is a
    path and attaches to $S$ as in the definition of a rich square.
    This can be done in time $O(n+m)$.
  
    In order to test (iv), we apply one of the algorithms
    in~\cite{Leh, Rouss}, which run in time $O(n+m)$.  If $G$ is a
    line graph, then any such algorithm returns a graph $H$ such that
    $G$ is the line graph of $H$; moreover, it is known that $H$ is
    unique up to isomorphism, except when $G$ is a clique on three
    vertices (where $H$ is either $K_3$ or $K_{1, 3}$).  Then we need
    only check if $H$ has maximum degree a most three, which is easy,
    and contains no cycle with a chord, which can be done in time
    $O(n^2m)$ by a method described in the next section.  

    Let us now evaluate the total complexity of the algorithm.
    Building the tree takes time $O(n^2m)$.  Since for each leaf
    $H$ on $n'$ vertices and $m'$ edges, the test performed on $H$
    takes time $O(n'^2m')$, and since the sum of the sizes of the
    leaves of the tree is $O(n+m)$, processing all the leaves of the tree
    takes time $O(n^2m)$.  Hence, the recognition algorithm runs
    in time $O(n^2m)$.
    
    We would have liked to make our algorithm rely on classical
    decomposition along 2-cutsets, but the classical algorithms, such
    as Hopcroft and Tarjan's decomposition into triconnected
    components \cite{hopcroft.tarjan:3con}.  But this algorithm does
    not use our ``proper'' 2-cutset, so we do not know how we could
    use it.

\subsection{Recognizing and coloring chordless graphs}

On the basis of Theorem~\ref{thm:nochord}, we can give a
polynomial-time recognition algorithm for chordless graphs.  We
describe this algorithm informally.  Let the input of the algorithm be
a graph $G$ with $n$ vertices and $m$ edges.  We first decompose $G$
along its cutsets of size one (if any).  This can be done in time
$O(n+m)$ using depth-first search, see~\cite{tar:dfs}; depth-first
search produces the maximal $2$-connected subgraphs (``blocks'') of
$G$, and their number is at most $n$.  Clearly, $G$ contains a cycle
with a chord if and only if some block of $G$ contains a cycle with a
chord.  So our algorithm proceeds with examining the blocks of $G$.

Now suppose that $G$ is $2$-connected and has a proper $2$-cutset
$\{a, b\}$.  So $V(G)\setminus \{a, b\}$ can be partitioned into two
sets $X, Y$ such that there is no edge between them and each of $G[X
\cup \{ a, b \}]$ and $G[Y \cup \{ a, b \}]$ is not an $(a, b)$-path.
Let $G_X$ be the subgraph of $G$ induced by $X\cup \{a, b\}$ plus an
artificial vertex adjacent to $a$ and $b$, and define $G_Y$ similarly.
We consider that $G$ is decomposed into graphs $G_X$ and $G_Y$.  These
subgraphs can in turn be decomposed along proper $2$-cutsets.  

This is applied as long as possible, which yields a {\it proper
  $2$-cutset decomposition tree} $T_{2c}$ of $G$, whose leaves are
graphs that have no proper $2$-cutset.  By Theorem~\ref{thm:nochord},
if such a leaf contains no cycle with a chord then it is sparse, and
it is easy to see that the converse also holds.  So it suffices to
check that every leaf $L$ is sparse, which is easily done by examining
the degree of the two endvertices of every edge of $L$.

Exactly like in the previous section, a tree using 2-cutsets as we do
above has size $O(n)$.  Checking the leaves of the tree takes linear
time, so in total our algorithm runs in time $O(n^2m)$.

\begin{lemma}
  Recognizing a chordless graph can be performed in time $O(n^2m)$.
\end{lemma}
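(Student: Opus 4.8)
The plan is to formalize the algorithm sketched in this subsection and verify its correctness and running time. First I would compute the blocks (maximal $2$-connected subgraphs) of $G$ by depth-first search in time $O(n+m)$; see~\cite{tar:dfs}. Since any cycle together with all its chords lies inside a single block, $G$ is chordless if and only if every block of $G$ is chordless, and there are at most $n$ blocks, so it suffices to treat each block $B$ separately. For this I would search for a proper $2$-cutset $\{a,b\}$ of $B$: partition $V(B)\sm\{a,b\}$ into nonempty $X,Y$ with no edge between them, set $G_X=B[X\cup\{a,b\}]$ with an added \emph{marker} vertex adjacent to $a$ and $b$, define $G_Y$ symmetrically, and recurse on $G_X$ and $G_Y$. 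Each of $G_X,G_Y$ is again $2$-connected (the standard behaviour of splitting a $2$-connected graph at a $2$-separation with a virtual edge), so this produces a proper $2$-cutset decomposition tree $T_{2c}$ of $B$ whose leaves are $2$-connected graphs with neither a $1$-cutset nor a proper $2$-cutset.

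The key correctness statement is that $B$ has a cycle with a chord if and only if $G_X$ or $G_Y$ does. Suppose first that $B$ has a cycle $C$ with a chord $e$. If $C$ together with $e$ lies in the induced subgraph $B[X\cup\{a,b\}]$, it is present in $G_X$, and likewise for the $Y$-side. Otherwise $C$ meets both $X$ and $Y$, hence passes through both $a$ and $b$ (which separate $X$ from $Y$), so $C$ splits into two $(a,b)$-subpaths, one with interior in $X$ and one with interior in $Y$; since there is no edge between $X$ and $Y$, both endvertices of $e$ lie on the same side, say in $X\cup\{a,b\}$, and then the $X$-subpath of $C$ plus the marker of $G_X$ is a cycle of $G_X$ on which $e$ is a chord. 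Conversely, suppose $G_X$ has a cycle with a chord; if it avoids the marker $x$ it is already such a cycle of $B$, and otherwise it uses the two edges $xa,xb$, so replacing the subpath $a\tp x\tp b$ by a chordless $(a,b)$-path with interior in $Y$ (one exists since $B$ is $2$-connected) yields a cycle of $B$ on which the old chord is still a chord, because $ab\notin E(B)$ (as $\{a,b\}$ is proper) and $x$ is incident only to $a,b$. Iterating, $B$ is chordless if and only if every leaf of $T_{2c}$ is chordless. Finally, a leaf $L$ has neither a $1$-cutset nor a proper $2$-cutset, so by Theorem~\ref{thm:nochord} it is chordless if and only if it is sparse (the reverse implication holds for every sparse graph, since a chord of a cycle would be an edge both of whose endvertices have degree at least three). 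Sparseness of $L$ is decided in time $O(|V(L)|+|E(L)|)$ by inspecting the degrees of the two endvertices of each edge.

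For the running time, exactly as in the previous subsection the potential $f(H)=|V(H)|-4$ satisfies $f(G_X)+f(G_Y)\le f(B)$ at each split, so $T_{2c}$ has $O(n)$ nodes and the decomposition trees of all blocks together have $O(n)$ nodes. A proper $2$-cutset of a graph on $n'$ vertices and $m'$ edges can be located in time $O(n'm')$: for each vertex $v$, compute by depth-first search the blocks and cutvertices of the graph minus $v$, and for each such block test whether its cutvertex $u$ makes $\{u,v\}$ a proper $2$-cutset. Running this routine once at each of the $O(n)$ internal nodes costs $O(n^2m)$ in total, and checking sparseness of all leaves costs only $O(n+m)$ since the leaves have total size $O(n+m)$. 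Hence the whole algorithm runs in time $O(n^2m)$, as claimed.

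The step I expect to be the main obstacle is the correctness of the $2$-cutset decomposition in both directions, namely transporting a cycle-with-a-chord faithfully across the cutset. This is exactly where the hypotheses are used: that $\{a,b\}$ is \emph{proper} ensures $ab\notin E(B)$, so a transported chord stays a chord and the $(a,b)$-path used to undo a marker is not a single edge, while the $2$-connectivity of the block supplies that chordless $(a,b)$-path. The bound on the size of $T_{2c}$ is routine, being identical to the bookkeeping already carried out for the \{ISK4, wheel\}-free recognition algorithm.
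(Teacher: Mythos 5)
Your proposal is correct and follows essentially the same route as the paper: decompose into blocks by depth-first search, build a proper $2$-cutset decomposition tree with marker vertices, bound its size by the potential $f(H)=|V(H)|-4$, find each proper $2$-cutset in $O(nm)$ time, and use Theorem~\ref{thm:nochord} to reduce the test on each leaf to checking sparseness. The only difference is that you spell out the transfer of a cycle-with-a-chord across a proper $2$-cutset (in both directions) and the $2$-connectivity of the blocks $G_X,G_Y$, details the paper treats as immediate, and your verification of them is sound.
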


Note that chordless graphs are included in the class of graphs that do
not contain a cycle with a unique chord and that do not contain $K_4$.
These graphs are shown to be 3-colorable by a polynomial time
algorithm in \cite{nicolas.kristina:one}, but the proof is
complicated.  Here below, we show that this problem is very easy in
the particular case of chordless graphs.

\begin{lemma}
  A 2-connected chordless graph has a vertex of degree at most 2.  So,
  any chordless graph is 3-colorable and a 3-coloring can be found in
  linear time.
\end{lemma}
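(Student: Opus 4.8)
The plan is to prove the first sentence—that every $2$-connected chordless graph has a vertex of degree at most $2$—and then derive the coloring statement by a trivial induction. For the first sentence, I would argue by contradiction: suppose $G$ is $2$-connected, chordless, and has minimum degree at least $3$. Since $G$ is $2$-connected and not a single vertex or edge, I can invoke Theorem~\ref{thm:nochord}, which says that a chordless graph is either sparse or admits a $1$-cutset or a proper $2$-cutset. A $2$-connected graph has no $1$-cutset, so $G$ is either sparse or has a proper $2$-cutset. If $G$ is sparse, then every edge $uv$ has an endpoint of degree at most $2$, contradicting $\delta(G)\ge 3$. So $G$ has a proper $2$-cutset $\{a,b\}$, and $V(G)\setminus\{a,b\}$ splits into nonempty $X,Y$ with no edges between them, neither $G[X\cup\{a,b\}]$ nor $G[Y\cup\{a,b\}]$ being an $(a,b)$-path; also $ab\notin E(G)$ by the definition of a proper $2$-cutset.

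The core step is to produce a cycle with a chord—contradicting chordlessness—from such a cutset together with the minimum-degree-$3$ hypothesis. Since $G$ is $2$-connected, there is a chordless $(a,b)$-path $P_X$ with interior in $X$ and a chordless $(a,b)$-path $P_Y$ with interior in $Y$; these interiors are disjoint and there are no edges between them, so $P_X\cup P_Y$ is an induced cycle $C$ (recall $ab\notin E(G)$). Now I use $\delta(G)\ge 3$: since neither side is merely an $(a,b)$-path, at least one side, say $X$, contains a vertex outside $P_X$, or some vertex of $P_X$ has a neighbor in $X\setminus P_X$; more directly, pick an interior vertex $w$ of $P_X$ (which exists because $ab\notin E(G)$ forces $P_X$ to have length $\ge 2$)—actually the cleaner route is: every vertex of $C$ has degree $\ge 3$ in $G$, hence has a neighbor off $C$. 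Following a shortest path from such a neighbor back to $C$, I find a vertex $v\notin C$ with (by minimality) its neighborhood on $C$; if $v$ has a single neighbor on $C$ I extend to get a larger structure, but the quickest contradiction is: $G[X\cup\{a,b\}]$ strictly contains the path $P_X$, so it has a cycle or a vertex of degree $\ge 3$ relative to a spanning tree—iterating, $X\cup\{a,b\}$ contains two internally disjoint $(a,b)$-paths, giving a cycle $C'$ inside $X\cup\{a,b\}$; since $C'$ together with $P_Y$ must be chordless, and $a,b$ are nonadjacent with all $P_Y$-interior anticomplete to $X$, the only way to avoid a chord is for $C'$ itself to be chordless and for no interior vertex of $P_Y$ to have two neighbors on $C'$—but then choosing $P_Y$ of length $\ge 2$ and combining with a suitable sub-path of $C'$ produces a cycle on which some vertex of $C'\setminus\{a,b\}$ has three neighbors, i.e.\ a chord. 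I expect the honest write-up here to mirror the final argument in the proof of Theorem~\ref{thm:nochord} almost verbatim, so I would keep it short and cite that proof's technique.

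For the second sentence, I would argue by induction on $|V(G)|$ that every chordless graph is $3$-colorable with a linear-time algorithm. A graph with at most $3$ vertices is trivially $3$-colorable. For the inductive step, if $G$ is disconnected or has a cutvertex, color each block by induction and glue at the cutvertices, permuting colors so they agree at the shared vertex—this works since each block is chordless (any cycle with a chord in a block would be one in $G$). If $G$ is $2$-connected, the first sentence gives a vertex $v$ with $\deg(v)\le 2$; by induction $G-v$ has a proper $3$-coloring, and since $v$ has at most $2$ neighbors a free color remains for $v$. The algorithm is the standard ``repeatedly remove a vertex of degree $\le 2$ (or a cutvertex), then color greedily in reverse order,'' which runs in linear time. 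The main obstacle is purely the first sentence: extracting the cycle-with-a-chord from the proper $2$-cutset without simply re-deriving Theorem~\ref{thm:nochord}; I would handle it by citing the cutset structure from that theorem and then running the short endgame argument above.
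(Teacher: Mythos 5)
The coloring half of your argument is fine, but the first sentence --- the actual content of the lemma --- is not proved. Your plan is to assume $\delta(G)\ge 3$, extract a proper $2$-cutset $\{a,b\}$ from Theorem~\ref{thm:nochord}, and then derive a cycle with a chord ``locally'' from the cutset. The core step as written does not work. First, the claim that $X\cup\{a,b\}$ must contain two internally disjoint $(a,b)$-paths (hence a cycle $C'$ through $a$ and $b$) does not follow from the cutset being proper: $G[X\cup\{a,b\}]$ need not be $2$-connected, and its cycles need not pass through $a$ and $b$ at all. Second, and more seriously, your endgame (``combining $P_Y$ with a suitable sub-path of $C'$ produces a cycle on which some vertex of $C'\setminus\{a,b\}$ has three neighbours'') never uses the hypothesis $\delta(G)\ge 3$, so if it worked it would show that no $2$-connected chordless graph has a proper $2$-cutset --- which is false. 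Take $a,b$ nonadjacent joined by four internally disjoint paths of length $3$: this graph is $2$-connected and chordless, $\{a,b\}$ is a proper $2$-cutset, and for any choice of $P_X$, $P_Y$, $C'$ the cycles you form are induced, so no chord appears. A contradiction can only come from feeding the minimum-degree hypothesis back in (e.g.\ by an induction on the blocks of the proper-$2$-cutset decomposition, which requires checking that the blocks stay chordless), and you explicitly defer exactly this step (``mirror the final argument in the proof of Theorem~\ref{thm:nochord}\,'') rather than carrying it out. So the key idea is missing.

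For comparison, the paper's proof is a two-line direct argument that avoids Theorem~\ref{thm:nochord} entirely: take an ear decomposition of the $2$-connected graph $G$. The last ear cannot be a single edge, since its two endpoints lie in the previously built $2$-connected subgraph, which contains a cycle through both of them, and the ear would then be a chord of that cycle. Hence the last ear is a path of length at least two, and its interior vertices have degree $2$ in $G$. Your derivation of $3$-colorability (induction, splitting at cut vertices, deleting a vertex of degree at most $2$ in the $2$-connected case, greedy extension in linear time) is correct once the first sentence is in place and matches the intended use of the lemma.
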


\begin{proof}
  If $G$ is chordless and 2-connected then it has an ear decomposition
  (see~\cite{bondy.murty:book}).  The last ear added to build $G$
  cannot be an edge because such an edge would be a chord of some
  cycle.  So, the last ear added to build $G$ is a path of length at
  least 2 and its interior vertices are of degree 2. 
\end{proof}

\subsection{Coloring \{ISK4, wheel\}-free graphs}
    
We present here a coloring algorithm which colors every \{ISK4,
wheel\}-free graph with three colors.  Its validity is based on
Theorem~\ref{l:WF3c} and it follows the same lines.  Let $G$ be any
\{ISK4, wheel\}-free graph with $n$ vertices and $m$ edges.

We first decompose $G$ along its clique-cutsets,  as in the preceding
subsection.  As in the proof of Theorem~\ref{l:WF3c},  a $3$-coloring
of the vertices of $G$ can be obtained simply by combining
$3$-colorings of each child of $G$ in the decomposition.  So let us
now suppose that $G$ has no clique cutset.

If $G$ contains a $K_{3, 3}$, then, by Lemma~\ref{l:decK33}, $G$ must
be a complete bipartite graph.  We can test that property in time
$O(n+m)$, and, if $G$ is complete bipartite, we return an obvious
$2$-coloring.  Now let us assume that $G$ contains no $K_{3, 3}$.

If $G$ has a proper $2$-cutset, then, as in the proof of
Theorem~\ref{l:WF3c}, we decompose $G$ into two graphs $G''_X$ and
$G''_Y$ and we can obtain a $3$-coloring of the vertices of $G$ by
combining $3$-colorings of $G''_X$ and $G''_Y$.  Moreover, we know
that these two graphs contain no ISK4, no wheel and no $K_{3, 3}$.
These graphs can be decomposed further (possibly also by clique
cutsets).  As above, one can prove that the total size of the
decomposition tree is $O(n)$ (we omit the details).

Finally, consider a leaf $L$ of the decomposition tree.  By
Theorem~\ref{th:nowheel}, $L$ is either a series-parallel graph, a
rich square, a line graph, or a complete bipartite graph.  Then
Lemmas~\ref{l:WF3c-sp}, \ref{l:WF3c-rs} and~\ref{l:WF3c-lg} show how
to construct a $3$-coloring of $L$ in polynomial time.  As for the
recognition, this can be implemented to run in time $O(n^2m)$.

\section*{Acknowledgement}

We are grateful to Alex Scott for pointing out the proof of
Theorem~\ref{th:ScottK4}.

\end{document}